\newlength{\tmpl}
\newcommand{\boxstar}{
\setlength{\tmpl}{\fboxsep}
\setlength{\fboxsep}{0pt}
\,\fbox{$\star$}\,
\setlength{\fboxsep}{\tmpl}
}
\numberwithin{equation}{section}
\newcommand{\onetwo}[1]{\hat{1}_2^{#1}} 
\newcommand{\pispecial}[1]{\nu_{0#1}} 
\DeclareMathOperator{\diag}{\mathrm{diag}}
\DeclareMathOperator{\Tr}{\mathrm{Tr}}
\newcommand{\Krewl}[1]{\mathpalette\harpleft{#1}}
\newcommand{\harpleftsign}{\scriptstyle\leftharpoonup}
\newcommand{\harpleft}[2]{%
  \ifx\displaystyle#1\doalign{$\harpleftsign$}{#1#2}\fi
  \ifx\textstyle#1\doalign{$\harpleftsign$}{#1#2}\fi
  \ifx\scriptstyle#1\doalign{\scalebox{.6}[.9]{$\harpleftsign$}}{#1#2}\fi
  \ifx\scriptscriptstyle#1\doalign{\scalebox{.5}[.8]{$\harpleftsign$}}{#1#2}\fi
}
\newcommand{\Krewr}[1]{\mathpalette\harpright{#1}}
\newcommand{\harprightsign}{\scriptstyle\rightharpoonup}
\newcommand{\harpright}[2]{%
  \ifx\displaystyle#1\doalign{$\harprightsign$}{#1#2}\fi
  \ifx\textstyle#1\doalign{$\harprightsign$}{#1#2}\fi
  \ifx\scriptstyle#1\doalign{\scalebox{.6}[.9]{$\harprightsign$}}{#1#2}\fi
  \ifx\scriptscriptstyle#1\doalign{\scalebox{.5}[.8]{$\harprightsign$}}{#1#2}\fi
}
\newcommand{\doalign}[2]{%
 {\vbox{\offinterlineskip\ialign{\hfil##\hfil\cr#1\cr$#2$\cr}}}%
}
\newcommand{\Krewlr}[1]{ \widetriangle{#1}} 
\newcommand{\ED}[1][{}]{E^{\mathcal D}_{#1}}
\def\R{{\mathbb R}}
\def\C{{\mathbb C}}
\def\IC{{\mathbb C}}
\def\IR{{\mathbb R}}
\def\N{{\mathbb N}}
\def\bX{{\boldsymbol{X}}}
\def\bZ{{\boldsymbol{Z}}}
\def\<{\langle}
\def\>{\rangle}
\def\Q{{\mathnormal Q}}
\def\X{{\mathnormal X}}
\def\Y{{\mathnormal Y}}
\def\A{\mathcal{A}}
\newcommand{\norm}[1]{\lVert#1\rVert}
\newcommand{\abs}[1]{\lvert#1\rvert}
\newcommand{\One}{\mathbf{1}}
\DeclareMathOperator{\Var}{\mathrm{Var}} 
\DeclareMathOperator{\SG}{\mathfrak{S}} 
\DeclareMathOperator{\NC}{\mathit{NC}} %
\DeclareMathOperator{\SP}{\mathcal{P}} %
\DeclareMathOperator{\NCeven}{\mathit{NCE}} %
\newtheorem{th-def}{Theorem-Definition}[section]
\newtheorem{theo}{Theorem}[section]
\newtheorem{lemm}[theo]{Lemma}
\newtheorem{Con}[theo]{Conjecture}
\newtheorem{prop}[theo]{Proposition}
\newtheorem{cor}[theo]{Corollary}
\theoremstyle{definition}
\newtheorem{problem}[theo]{Problem}
\newtheorem{defi}[theo]{Definition}
\newtheorem{Rem}[theo]{Remark}
\def\cvput#1[#2]{\pnode(#1,1){#1} \pscircle*(#1,1){.1} \rput(#1,.5){$#2$}}
\title{Sample Variance in Free Probability }
\author[Wiktor Ejsmont]{Wiktor Ejsmont}
\address[Wiktor Ejsmont]{ Wydzia\l  Matematyki i Informatyki, Uniwersytet Im. Adama Mickiewicza, Collegium
Mathematicum, Umultowska 87, 61-614 Pozna\'n , Poland}
\email{wiktor.ejsmont@gmail.com}
\author[Franz Lehner]{Franz Lehner}
\address[Franz Lehner]{Department of Discrete Mathematics
TU Graz
Steyrergasse 30, 8010 Graz, Austria }
\email{lehner@math.tugraz.at}
\subjclass[2010]{Primary: 46L54. Secondary: 62E10.}
\keywords{Sample variance, Wigner semicircle law, free Poisson distribution, free infinite divisibility, free cumulants, noncrossing partitions}
\begin{document}

\begin{abstract} 
Let $\X_1, \X_2,\dots, \X_n$ denote i.i.d.~centered standard normal random variables,
then the law of the sample variance 
$Q_n=\sum_{i=1}^n(\X_i-\overline{\X})^2$ is the $\chi^2$-distribution with
$n-1$ degrees of freedom.
It is an open problem in classical probability
to characterize all distributions with this property and in particular, 
whether it characterizes the normal law.
In this paper we present a solution of the free analogue of this question
and show that the only distributions, 
whose free sample variance is distributed according to a free
$\chi^2$-distribution, are the semicircle law and more generally
so-called \emph{odd} laws, by which we mean laws with vanishing
higher order even cumulants. In the way of proof we derive an
explicit formula for the free cumulants of $Q_n$ which shows that
indeed the odd cumulants do not contribute and which exhibits
an interesting connection to the concept of $R$-cyclicity.
\end{abstract}
\date{\today}
\setlength{\parindent}{0pt}
\maketitle
\begin{verse} \textbf{
Dedicated to our friend and mentor 
Marek Bo\.zejko
on the occasion of his 70-th birthday}
\end{verse}

\section{Introduction}
\noindent 
Many questions in classical statistics involve characterization problems,
which usually are instances of the following very general question:

\begin{problem}
Let $\X_1, \X_2,\dots, \X_n$ be independent random variables with common
unknown distribution function $F$, and $T :=
T(\X_1, \X_2,\dots, \X_n)$ a statistic, based on
$\X_1, \X_2,\dots, \X_n$, with distribution function $G$. 
Can $F$ be recovered from $G$? 
\end{problem}

Problems of this kind are the central leitmotiv of the fundamental work of
Kagan,  Linnik and Rao \cite{KaganLinnikRao:1973}.
In the present paper we solve the free version of the following problem,
which is still open in classical probability and might be called
$\chi^2$-conjecture,
see \cite[p.~466]{KaganLinnikRao:1973}:

\begin{Con}
\label{conj:KaganLinnikRao}
 If $\X_1, \X_2,\dots, \X_n$ are non-degenerate, independently and identically distributed  classical random
variables with finite non-zero variance $\sigma^2$, then a necessary and
sufficient condition for $\X_1$ to be normal is that
$\sum_{i=1}^n(\X_i-\overline{\X})^2/\sigma^2$ be distributed as classical
chi-square distribution with $n -1$ degrees of freedom. 
\end{Con}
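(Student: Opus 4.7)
The plan is to translate the distributional hypothesis on $Q_n=\sum_{i=1}^n(\X_i-\overline{\X})^2$ into an analytic identity for the characteristic function $\phi(t)=\E[e^{it\X_1}]$ of the common law (normalized so that $\E[\X_1]=0$ and $\sigma^2=1$), and then to argue that the only solution is the Gaussian $\phi(t)=e^{-t^2/2}$. Note that $Q_n=\sum_{i,j}M_{ij}\X_i\X_j$, where $M=I-\frac1n J$ is the orthogonal projector onto the hyperplane orthogonal to $(1,\dots,1)$. Diagonalizing $M$ via an orthogonal matrix $O$ whose first row is $\frac1{\sqrt n}(1,\dots,1)$ and setting $Y_j=\sum_k O_{jk}\X_k$, one obtains $Y_1=\sqrt n\,\overline{\X}$ and $Q_n=Y_2^2+\cdots+Y_n^2$; the joint characteristic function of $(Y_1,\dots,Y_n)$ at $t=(t_1,\dots,t_n)$ is $\prod_{j=1}^n \phi\bigl(\sum_k O_{kj}t_k\bigr)$.

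First I would dispatch the case $n=2$: here $Q_2=\tfrac12(\X_1-\X_2)^2\sim\chi^2_1$ forces $\X_1-\X_2\sim N(0,2)$, and Cram\'er's decomposition theorem then yields that $\X_1$ and $\X_2$ are individually Gaussian. For $n\geq 3$, the hypothesis $Q_n\sim\chi^2_{n-1}$ is equivalent to the Laplace-transform identity $\E[e^{-sQ_n}]=(1+2s)^{-(n-1)/2}$ for all $s\geq 0$. Expressing the left-hand side as the integral of $e^{-s(y_2^2+\cdots+y_n^2)}$ against the joint density of $(Y_2,\dots,Y_n)$ and Fourier-inverting in the dual variables yields a nontrivial functional identity that $\phi$ must satisfy on tuples of arguments orthogonal to $(1,\dots,1)$.

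The next step would be a cumulant-based attack: expand $\log\E[e^{-sQ_n}]$ formally in the cumulants $\kappa_j=\kappa_j(\X_1)$ using the general trace formula for cumulants of a quadratic form as a sum over matrix-valued cycles (so that $\kappa_j$ contributes through the $j$-th power trace $\Tr(M^j)$ together with mixed cycle corrections). Matching the coefficient at each order of $s$ to the corresponding cumulant of $\chi^2_{n-1}$ (namely $2^{j-1}(j-1)!(n-1)$), one obtains an infinite system of polynomial equations in $\kappa_3,\kappa_4,\dots$ (with $\kappa_1=0$ and $\kappa_2=1$ fixed). The hope is to argue inductively: assuming $\kappa_3=\cdots=\kappa_{j-1}=0$, the equation at order $s^j$ should reduce to a linear condition forcing $\kappa_j=0$, and hence $\phi$ is Gaussian.

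The main obstacle is precisely this inductive step, which is where the classical $\chi^2$-conjecture has resisted proof for decades. Unlike the situation where one additionally assumes $\overline{\X}$ and $Q_n$ independent---in which case Zinger's theorem directly gives normality---the marginal law of $Q_n$ alone does not decouple $(Y_2,\dots,Y_n)$ into independent one-dimensional Gaussians, and at high orders the resulting polynomial identities mix all of the $\kappa_j$ in a way that does not manifestly force them to vanish. A serious attack would probably need a genuinely new ingredient: for instance analytic continuation of the Laplace-transform identity to complex $s$ combined with Paley--Wiener-type growth control on $\phi$, or a rigidity argument exploiting the specific geometry of the projector $M=I-\frac1n J$, perhaps drawing inspiration from the free-probability analogue that the remainder of this paper resolves.
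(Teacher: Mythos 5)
There is a genuine gap, and it is the one you yourself concede in the final two paragraphs: the statement you are trying to prove is precisely Conjecture~\ref{conj:KaganLinnikRao}, which the paper records as an \emph{open} problem in classical probability and does not prove at all --- the paper only resolves its free analogue (Theorem~\ref{twr:OddElement}), where the answer is in fact different (odd laws, not only the semicircle/normal law). Your write-up is therefore a program rather than a proof. The $n=2$ reduction via Cram\'er is fine (this is Ruben's case), but for $n\geq 3$ everything hinges on the inductive step ``assuming $\kappa_3=\cdots=\kappa_{j-1}=0$, the equation at the appropriate order forces $\kappa_j=0$,'' and you offer no argument for it. Concretely, after imposing the induction hypothesis, the cumulants $K_r(Q_n)$ for $\lceil j/2\rceil\leq r<j$ produce a finite \emph{linear} system in $\kappa_j,\kappa_{j+1},\dots,\kappa_{2r}$ (the pure-$\kappa_2$ contribution already matches the $\chi^2_{n-1}$ cumulant), but whether that system has only the trivial solution is exactly what is not known; the coefficients do not have an obvious sign or rank structure, odd and even cumulants are genuinely coupled (in contrast with the free case, where the paper proves that odd cumulants cancel out of $Q_n$ altogether), and the known partial results (Ruben's symmetric case, the two-sample-size variant, the infinitely divisible case of Golikova--Kruglov) all inject extra hypotheses precisely to break this deadlock.

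A secondary, smaller point: your claim that $\kappa_j$ ``contributes through $\Tr(M^j)$ together with mixed cycle corrections'' needs care, because for a non-Gaussian law the cumulants of the quadratic form $\sum_{i,j}M_{ij}\X_i\X_j$ are not functions of the spectrum of $M$ alone --- the diagonal entries $M_{ii}=1-\tfrac1n$ enter separately, which is another reason the rotation trick that diagonalizes $M$ does not decouple $(Y_2,\dots,Y_n)$ and the problem does not reduce to a one-dimensional characterization. So the proposal correctly identifies the shape of the difficulty but does not close it; as it stands it proves the conjecture only in cases already covered by the literature cited in the paper.
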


The classical $\chi^2$-conjecture was studied previously by several authors. 
The first result is due to Ruben \cite{Ruben:1974},
who proved the conjecture under the assumption that either $n=2$ or
$\X_1$ is symmetric. 
It is not known whether the symmetry  hypothesis
can be dropped for $n\geq 3$. 
In a later paper \cite{Ruben:1975} Ruben  used combinatorial tools
to show  that the symmetry condition can be dropped
provided the sum of squares of the sample observations about the sample
mean, divided by $\sigma^2$, is distributed as chi-square for two distinct
sample sizes $m\neq n$ and $m,n\geq 2 $. 
The proof given by Ruben is based on the cumulants of the sample variance
and is somewhat complicated. 
Shortly later a simpler and more direct proof 
based on the moments of the sample variance 
was presented by Bondesson \cite{Bondesson:1977}.

The original problem was  solved recently by  Golikova and Kruglov
\cite{GolikovaKruglov:2015} 
under the additional assumption that $\X_1, \X_2,\dots, \X_n$ 
are independent infinitely divisible random variables. 

The following related characterization problem
was solved by Kagan and Letac~\cite{KaganLetac:1999}:
Let
$\X_1 ,\X_2 ,\dots, \X_n$ 
be independent and identically distributed random variables
and assume that the distribution of the quadratic statistic
 $\sum_{i=1}^n(\X_i-\overline{\X}+a_i)^2$ depends only on $\sum_{i=1}^na_i^2$.
Then each $\X_i$ have distribution $N(0, \sigma)$.

In the present paper we answer analogous questions in free probability. 
Free probability and free convolution
was introduced by Voiculescu in \cite{Voiculescu:1985}
as a tool to study the von Neumann algebras of free groups.
Free probability is now an established field of research with deep connections
to combinatorics, random matrix theory, representation theory
and many analogies to classical probability.
Let us restrict our discussion to two specific ones, which are relevant
to the problems discussed in the present paper.
On the analytic side the Bercovici-Pata bijection
\cite{BercoviciPata:1999} provides a one-to-one correspondence
between infinitely divisible measures with respect to classical and
free convolution. For example, the  analogue of the normal law is played 
by Wigner's semicircle distribution which features as the limit law
in the free central limit theorem.

On the combinatorial side we will make heavy use of
free cumulants introduced by Speicher~\cite{Speicher:1994}.
Roughly speaking, any result about classical cumulants can be
translated to free probability by replacing the lattice of set partitions
by the lattice of noncrossing partitions.
Our standard reference for free cumulants is the book~\cite{NicaSpeicher:2006}.

We are concerned here with free analogues of characterization theorems 
in the spirit of \cite {KaganLinnikRao:1973}.
The study of free analogues of classical theorems has witnessed
increasing interest during the last decade, see, e.g.,
\cite{BozejkoBryc:2006,Ejsmont:2013,HiwatashiKurodaNagisaYoshida:1999,Lehner:2004,SzpojankowskiWesolowski:2014,Szpojankowski:2014,Szpojankowski:2015}. 
Many properties of free random variables are analogous to those of their
classical counterparts, in particular when they are picked according
to the Bercovici-Pata bijection. There are, however, exceptions, mostly
due to the failure of Marcinkiewicz' and Cram\'er's theorems in free probability.
In particular, Bercovici and Voiculescu \cite{BercoviciVoiculescu:1995}  
showed that there exist free random variables
with a finite number of nonvanishing free cumulants which are not semicircular,
see \cite{ChistyakovGoetze:2011} for a characterization of such distributions.
This class of distributions appears in some (but not all)
free characterization problems which are analogues of classical
characterizations of the normal law,
cf.~\cite{Lehner:2003,ChistyakovGoetze:2011}.

In the present paper we show that 
Conjecture~\ref{conj:KaganLinnikRao} also falls in this class of problems
and instead of Wigner laws we obtain the class of \emph{odd} laws,
i.e., laws with vanishing even cumulants. 
Such laws do not exist in classical probability, but
can be constructed in free probability using the results of
\cite{ChistyakovGoetze:2011}. On the way we encounter a remarkable
cancellation phenomenon: odd cumulants do not contribute to the distributions
of certain quadratic statistics.

The paper is organized as follows. In section 2 we review basic free probability  and the statement of the main result. Next in the subsection 2.2 we quote complementary facts, lemmas and indications. In the  third section we  prove our main results. 
Finally, in section 4  
we look more closely at the relation between the sample variance, the free commutator, $R-$cyclic matrices and free infinite divisibility. 


\section{Free   probability  and statement of the main result}

\subsection{Basic Notation and Terminology}
A tracial noncommutative probability space is a pair $(\mathcal{A},\tau)$
where $\mathcal{A}$ is a von Neumann algebra, and  $\tau:\mathcal{A} \to
\IC$ is a normal, faithful, tracial state, i.e., $\tau$ is linear and
continuous in the weak* topology, $\tau(\X \Y)=\tau(\Y \X)$,  $\tau(\mathrm{I})=1$, $\tau(\X\X^*)\geq 0$ and $\tau(\X \X^{*}) = 0$ implies $\X = 0$ for all $\X,\Y \in \mathcal{A}$.

The (usually taken to be self-adjoint) elements $\X\in{\mathcal{A}}_{sa}$
are called (noncommutative) random variables.
Given a noncommutative random variable $\X\in{\mathcal{A}}_{sa}$, 
the distribution of $\X$ in the state $\tau$ is the unique probability measure
$\mu_\X$ on $\IR$ (given by the spectral theorem) which reproduces its moments, i.e.,
 $\tau( X^n)=\int_{\IR} \lambda^n\,d\mu_\X(\lambda)$
for $n\in\N$. This definition can be extended to self-adjoint possibly
unbounded operators $\X$ affiliated to $\mathcal{A}$ by requiring that
 $\tau( f(\X))=\int_{\IR} f(\lambda)\,d\mu_\X(\lambda)$
for any bounded Borel function $f$ on $\IR$.
The set of affiliated operators is denoted by $\widetilde{\mathcal{A}}_{sa}$.

\subsection{Free Independence, Free Convolution and Free infinite Divisibility}
A family of von Neumann subalgebras $\left(\mathcal{A}_i\right)_{i\in I}$ 
of $\mathcal{A}$ 
are called \emph{free}
 if $\tau(\X_{1} \dots \X_{n} ) = 0$ whenever $\tau(\X_{j} ) = 0$ for all
$j = 1,\dots, n$ and $\X_{j} \in \mathcal{A}_{i(j)}$ for some indices $i(1)\neq i(2)\neq \dots \neq i(n)$.
Random variables $\X_{1},\dots ,\X_{n} $  are freely independent (free) if the
subalgebras they generate are free. 
Free random variables can be constructed using the reduced free product
of von Neumann algebras \cite{Voiculescu:1985}.
For more details about free convolutions and free probability theory, the reader can consult \cite{NicaSpeicher:2006,VoiculescuDykemaNica:1992}.

It can be shown that the joint distribution of free random variables $X_i$
is uniquely determined by the distributions of the individual random variables
$X_i$ and therefore the operation of \emph{free convolution} is well defined:
Let $\mu$ and $\nu$ be probability measures on $\IR$, and
$\X,\Y$ self-adjoint free random variables with respective distributions
$\mu$ and $\nu$,
The distribution of $\X+\Y$ is called the free additive convolution of $\mu$
and $\nu$ and is denoted by $\mu \boxplus \nu$.

In analogy with classical probability,
a probability measure $\mu$ on $\R$ is said to be
\emph{freely infinitely divisible} (or FID for short)
if for each $n \in \{1, 2, 3, \dots \}$ there exists a probability measure
$\mu_n$ such that
$\mu=\underbrace{ \mu_n\boxplus\dots\boxplus\mu_n}_{n-times}$.

\subsection{The Cauchy-Stieltjes Transform and Free Convolution}
The analytic approach to free convolution uses the Cauchy transform
\begin{align}
\label{eq:hm:ball}
G_\mu(z)=\int_{\IR}\frac{1}{z-y}\mu(dy). 
\end{align}
of a probability measure $\mu$. It is analytic on the upper half
plane $\IC^+=\{x+iy|x,y\in \IR, y>0\}$ 
and takes values in the closed lower half plane
 $\IC^-\cup\IR$. 
The Cauchy transform has an inverse at a neighbourhood of infinity
which has the form
$$
G_\mu^{-1}(z) = \frac{1}{z} + R_\mu(z)
$$
where $R_\mu(z)$ is analytic in a neighbourhood of zero and is called 
\emph{$R$-transform}.
Then free convolution is defined (see~\cite{Voiculescu:1986})
via the identity
\begin{align} \label{freeconv}
R_{\mu\boxplus\nu}=R_\mu+R_\nu.
\end{align} 

The coefficients of the $R$-transform
\begin{align} \label{rtr}
R_{\X}(z)=\sum_{n=0}^{\infty}\,K_{n+1}(\X)\,z^n.
\end{align} 
are called \emph{free cumulants} of the random variable $\X$.



The Cauchy transform is related to the
moment generating function $M_{\X}$ as follows:
\begin{align}\label{mgf}
M_{\X}(z)=\sum_{n=0}^{\infty}\,\tau(\X^n)\,z^n = \frac{1}{z}G_\X\left(\frac{1}{z}\right).
\end{align}
\subsection{Some probability distributions}
Let us now recall basic properties of some specific probability distributions
which play prominent roles in the present paper.

\subsubsection{Wigner semicircular distribution}
A non-commutative random variable $\X$ is said to be free normal variable
(i.e. have Wigner semicircular distribution) if the Cauchy-Stieltjes transform
is given by the formula
\begin{eqnarray}
G_{\mu}(z)=\frac{z  -\sqrt{z ^2 - 4}}{2} , \label{eq:GtransformataMixner}
\end{eqnarray}
where  $|z|$ is big enough, where the branch of the analytic square root should be determined by the condition
that $\Im(z)>0\Rightarrow \Im(G_\mu(z))\leqslant 0$ (see \cite{SaitohYoshida:2001}). 
Equation (\ref{eq:GtransformataMixner}) describes the family of distributions  with mean zero and variance one (see \cite{Ejsmont:2012,SaitohYoshida:2001}). 
This measure has density 
$$\frac{\sqrt{4-x^2}}{2\pi},$$
on $-2 \leq x \leq 2 $. 
The Wigner semicircular distribution have cumulants $
K_i=0 
$
 for $i>2$.  

\subsubsection{Free Poisson distribution}
A non-commutative random variable $\X$ is said to be free-Poisson variable if it has Marchenko-Pastur (or free-Poisson) distribution $\nu=\nu(\lambda,\alpha)$ defined by the formula
\begin{align} \label{MPdist}
\nu=\max\{0,\,1-\lambda\}\,\delta_0+ \tilde{\nu},
\end{align}
where $\lambda\ge 0$ and the measure $\tilde{\nu}$, supported on the interval $(\alpha(1-\sqrt{\lambda})^2,\,\alpha(1+\sqrt{\lambda})^2)$, $\alpha>0$ has the density (with respect to the Lebesgue measure)
$$
\tilde{\nu}(dx)=\frac{1}{2\pi\alpha x}\,\sqrt{4\lambda\alpha^2-(x-\alpha(1+\lambda))^2}\,dx.
$$
The parameters $\lambda$ and $\alpha$ are called the rate and the jump size, respectively.
It is worth to note that a non-commutative variable with Marchenko-Pastur distribution arises also as a limit in law (in non-commutative sense) of variables with distributions $((1-\frac{\lambda}{N})\delta_0+\frac{\lambda}{N}\delta_{\alpha})^{\boxplus N}$ as $N\to\infty$, see \cite{NicaSpeicher:2006}. Therefore, such variables are often called free-Poisson. It is easy to see that if $X$ is free-Poisson, $\nu(\lambda,\alpha)$, then $K_n(\X)=\alpha^n\lambda$. Therefore its
$R$-transform has the form
$$R(z)=\frac{\lambda\alpha}{1-\alpha z}.$$

\subsubsection{Free  chi-square distribution}
Let $\X_1,\dots,\X_n$ be free identically distributed random variables from the Wigner semicircular distribution with non-zero variance $\sigma^2$ and mean zero, and  $\delta=\sum_{i=1}^n m_i^2$  $(m_i \in \R)$. We call the distribution of the random variable $\sum_{i=1}^n (\X_i+m_i)^2$ the free chi-square distribution with $n$ degrees of freedom
and noncentrality parameter $\delta$, and we denote this distribution $\chi^2(n,\sigma,\delta)$ (a first version of this definition was introduced in \cite{HiwatashiKurodaNagisaYoshida:1999}).
In terms of $R$-transforms, a random variable $Y$ has distribution
$\chi^2(n,\sigma,\delta)$ if and only if
\begin{align} R_Y(z)=\frac{n\sigma^2}{1-\sigma^2z}+\frac{\delta}{(1-2z)^2}. \label{eq:rtransormatachisq}\end{align}
If $\delta=0$, the free chi-square distribution is called central, otherwise non-central and then we will write $\chi^2(n,\sigma)$ and from \eqref{eq:rtransormatachisq} we see that 
 $\chi^2(n,\sigma)$ has the Marchenko-Pastur  distribution $\nu(n,\sigma^2)$.  Moreover,  we will use the notation $\chi^2(n):=\chi^2(n,1)$.
It was shown in \cite{HiwatashiKurodaNagisaYoshida:1999}
that these distributions form a semigroup, namely 
$\chi^2(n_1,\sigma,\delta_1)\boxplus\chi^2(n_2,\sigma,\delta_2)=
\chi^2(n_1+n_2,\sigma,\delta_1+\delta_2)$.

\subsubsection{Even elements}
 We call an element $\X\in \mathcal{A}$ even if  all its odd moments vanish, i.e. $\tau(\X^{2i+1})=0$ for all $i\geq 0.$ 
It is immediately seen that the vanishing of all odd moments is
equivalent to the vanishing of all odd cumulants, i.e., $K_{2i+1}(\X)=0$  
and thus the even cumulants contain the complete information about the distribution of an even element. The sequence
$\alpha_n=K_{2n}(\X)$ is called the \emph{determining sequence} of $X$.

\subsubsection{Odd elements}
 We call an element $\X\in \mathcal{A}$ \emph{odd} if $K_2(\X)>0$ and all its
 even free cumulants of order higher than two vanish, 
i.e. if $K_{2i}(\X) = 0$ for all $i \geq 2$.

The basic example of such a law is Wigner's semicircular distribution.
The classical analogue of odd elements only include the normal distribution
because otherwise we could construct a normal random variable
which is the sum of independent non-normal random variables
(see below for the free case).
This contradicts Cram\'er's decomposition theorem.
However the free analogues of Marcinkiewicz' and Cram\'er's theorems fail.
Bercovici and Voiculescu~\cite{BercoviciVoiculescu:1995} showed that
there exist probability distributions $\mu_\epsilon$ with free cumulants
 $K_1(\X)=0$, $K_2(\X)=1$, $K_3(\X)=\epsilon$ and $K_i(\X)=0$ 
for $i\geq 4 $ if $\epsilon$ is small enough.
This is an odd element and thus an explicit counterexample to the free analogue
of Marcinkiewicz' theorem. To invalidate Cram\'er's theorem,
take free copies $\X_1$ and $\X_2$ of random variables with distribution $\mu_\epsilon$, then the difference $\X_1-\X_2$ is semicircular.
Chistyakov and G\"otze \cite{ChistyakovGoetze:2011} gave a detailed
description of laws with finitely many free cumulants of arbitrary order.
Thus an abundance of odd laws exists.

\subsection{The main result}
The main result of this paper is the following characterization of odd elements
in terms of the sample variance. The proof of this theorem is given in
Section 3.

 \textbf{The sample variance} of a finite sequence of random variables $X_i$
is the quadratic form
\begin{equation}
S^2_n
=\frac{1}{n}\sum_{i=1}^n(\X_i-\overline{\X})^2
=\frac{1}{n}\Big(1-\frac{1}{n}\Big)\sum_{i=1}^n\X_i^2-\frac{1}{n}\sum_{i,j=1, \textrm{ }i\neq j}^n\X_i\X_j
=\frac{1}{n^2}\sum_{1\leq i < j \leq n} (\X_i-\X_j)^2.
\label{eq:SampleVariance}
\end{equation}
However in order to simplify notation in the present paper we chose to
consider and call ``sample variance'' the rescaled quadratic form $Q_n=n S^2_n
=\sum_{i=1}^n(\X_i-\overline{\X})^2
$.

Our main result resolves the free analogue of $\chi^2$-conjecture. 
\begin{theo} 
 Let $\X_1, \X_2,\dots, \X_n \in \A_{sa}$ be free copies of
a random variable $X$ with finite non-zero variance $\sigma^2$.
 Then $\Q_n$
is distributed according to $\chi^2(n-1,\sigma)$ if and only if $\X$ is odd.  
\label{twr:OddElement} 
\end{theo}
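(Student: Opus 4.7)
Since $\X_i - \overline{\X}$ is invariant under the shift $\X_i \mapsto \X_i + c$, we may assume $\tau(X)=0$ (hence $K_1(X)=0$) throughout. Write
$\Q_n = \sum_{i,j=1}^n c_{ij}\,\X_i \X_j$
with centering matrix $c_{ij} = \delta_{ij} - 1/n$, noting that $C=(c_{ij})$ is a rank $(n-1)$ projection. The plan is to compute the free cumulants $K_m(\Q_n)$ in closed form and show they equal $(n-1)\sigma^{2m}$ (the $\chi^2(n-1,\sigma)$ cumulants read off from~\eqref{eq:rtransormatachisq}) precisely when $X$ is odd.

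The main tool is the formula for free cumulants of products of entries (Nica--Speicher, Thm.~14.4):
\begin{equation*}
K_m(\Q_n) = \sum_{i_1,j_1,\dots,i_m,j_m} \Bigl(\prod_{r=1}^m c_{i_r j_r}\Bigr)
\sum_{\substack{\pi \in \NC(2m)\\ \pi \vee \sigma = 1_{2m}}}
K_\pi\bigl(\X_{i_1}, \X_{j_1}, \dots, \X_{i_m}, \X_{j_m}\bigr),
\end{equation*}
where $\sigma = \{\{1,2\},\{3,4\},\dots,\{2m-1,2m\}\}$. Freeness of the $\X_i$'s forces $K_\pi$ to factor as $\prod_{B\in\pi} K_{|B|}(X)$ and to vanish unless each block of $\pi$ carries a single constant index. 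Rearranging yields $K_m(\Q_n) = \sum_\pi W_\pi(n)\,\prod_{B\in\pi} K_{|B|}(X)$, where $W_\pi(n)$ is a polynomial in $n$ obtained by summing $\prod_r c_{i_r j_r}$ over assignments of an index in $\{1,\dots,n\}$ to each block of $\pi$ compatibly with the product structure~$\sigma$.

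The crux is the \textbf{cancellation of odd blocks}: $W_\pi(n) = 0$ whenever $\pi$ contains a block of odd size, so that only the even cumulants $K_{2k}(X)$ contribute. I would prove this combinatorially by expanding each $c_{i_r j_r} = \delta_{i_r j_r} - 1/n$ into a signed sum indexed by subsets $S \subseteq \{1,\dots,m\}$ (choosing the $-1/n$ option at positions in $S$), and constructing a sign-reversing involution on the resulting terms driven by a distinguished element of an odd-sized block of $\pi$; the rank-one perturbation structure $C-I = -\frac{1}{n}J$ is what forces the collapse. A more conceptual route, hinted at in the abstract and pursued in Section~4, is to realize $\Q_n$ as a (diagonal) entry of a product of $R$-cyclic matrices built from $D = \diag(\X_1,\dots,\X_n)$ and to invoke the Nica--Shlyakhtenko--Speicher cumulant formula for $R$-cyclic entries, which transparently rules out odd-size blocks.

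Granted the cancellation, the ``if'' direction follows by specialising: if $X$ is odd, only noncrossing pair partitions $\pi \in \NC(2m)$ with $\pi \vee \sigma = 1_{2m}$ survive, and a direct combinatorial count combined with the index summation should yield $(n-1)\sigma^{2m}$, matching~\eqref{eq:rtransormatachisq}. For the ``only if'' direction, the full block $\pi=\{\{1,\dots,2m\}\}$ contributes $n(1-1/n)^m = (n-1)^m/n^{m-1} \neq 0$ as the coefficient of $K_{2m}(X)$ in $K_m(\Q_n)$, so the equations $K_m(\Q_n) = (n-1)\sigma^{2m}$ determine $K_{2m}(X)$ recursively from $K_2(X),\dots,K_{2m-2}(X)$; the ``if'' direction then exhibits $K_2(X)=\sigma^2$, $K_{2k}(X)=0$ ($k\geq 2$) as a solution, and uniqueness forces oddness. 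The main obstacle is clearly the cancellation lemma, which combinatorially encodes the geometric fact that $C$ projects onto the hyperplane $\sum y_i = 0$; a clean proof likely goes via the $R$-cyclic interpretation rather than brute force sign-matching.
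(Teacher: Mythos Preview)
Your overall architecture matches the paper's: expand $K_m(Q_n)$ by the product formula, show that only even noncrossing partitions $\pi$ contribute, isolate the two extreme coefficients $W_{\pispecial{m}}(n)=n-1$ and $W_{\hat 1_{2m}}(n)=n(1-1/n)^m$, and run the recursion both ways. The substantive difference is in the cancellation step. You assert the \emph{pointwise} statement $W_\pi(n)=0$ for every individual $\pi$ containing an odd block, whereas the paper (Lemma~\ref{lem:oddcancellation}) proves only that certain \emph{sums} of such contributions vanish, via a telescope over odd subsets combined with the exchangeability Lemma~\ref{lemm:sumyobcietelematgeneral}. Your stronger pointwise claim is in fact true and admits a short direct proof: any noncrossing $\pi$ with an odd block contains an odd \emph{interval} block $B$ (recurse into an odd-length gap if the outermost odd block is not itself an interval), and such a $B$ is incident to exactly one non-loop $\sigma$-pair; summing the index attached to $B$ against the zero-row-sum matrix $C$ then kills $W_\pi(n)$. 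This argument is specific to the centering projection $C=I-\tfrac1nJ$ and more elementary than the paper's route, which in exchange works for any symmetrized square of a centred linear form.

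Where your proposal is thin is that neither of the two justifications you actually sketch delivers this. The ``sign-reversing involution'' on the $2^m$ terms obtained by expanding each $c_{i_kj_k}=\delta_{i_kj_k}-1/n$ is left entirely unspecified, and it is not clear what pairing an odd block would induce on those terms. More importantly, the $R$-cyclic route of Section~\ref{sec:Rcyclic} requires the $X_i$ to be \emph{even} (Proposition~\ref{prop:Rcyclic}): $R$-cyclicity of $[X_iX_j]$ fails for general $X$, so that machinery cannot be invoked to establish odd-block cancellation before evenness has effectively been assumed. Supplying the odd-interval-block argument above, or the paper's telescope, closes the gap.
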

Depending on the point of view it can be interpreted both as a positive
and a negative solution.

Taking into account the failure of Marcinkiewicz theorem this confirms 
the free analogue of the $\chi^2$-conjecture in the broad sense.

If we suppose in addition that the distribution is even, 
then the above theorem gives a positive answer to the free analogue of
Ruben's first theorem \cite{Ruben:1974}.

\begin{prop}
 Let $\X_1, \X_2,\dots, \X_n $ be free identically distributed random variables with finite non-zero variance $\sigma^2$, and assume that the 
distribution of $\X_1$ is symmetric.
 Then $\Q_n$
is distributed as $\chi^2(n-1,\sigma)$ if and only if $\X_1$ has
Wigner semicircular law. 
\label{twr:1} 

\end{prop}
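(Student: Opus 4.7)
The plan is to derive the proposition as an almost immediate corollary of Theorem~\ref{twr:OddElement}, by combining the hypothesis of symmetry (which controls the \emph{odd} free cumulants) with the conclusion of Theorem~\ref{twr:OddElement} (which controls the higher \emph{even} free cumulants). The only auxiliary fact needed is the standard translation between symmetry of a distribution and vanishing of odd free cumulants.

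First I would record the auxiliary fact: a compactly supported probability measure $\mu_X$ on $\IR$ is symmetric if and only if $K_{2i+1}(X)=0$ for every $i\geq 0$. One direction is clear since symmetry means $\tau(X^{2i+1})=0$ and odd cumulants can be expressed inductively in terms of odd moments via the free moment-cumulant formula (any noncrossing partition of an odd-sized set contains at least one block of odd size, so the parity of the sum of block sizes matches the parity of the number of odd blocks). The converse follows from the same formula: if every odd cumulant vanishes, then each $\tau(X^{2i+1})$ is a sum over noncrossing partitions of $[2i+1]$ of products of cumulants, and each such product contains a factor $K_{2j+1}(X)=0$.

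With this in hand, the proof splits into two short implications. For the ``if'' direction, assume $X_1$ is semicircular with variance $\sigma^2$. Then $K_1(X_1)=0$, $K_2(X_1)=\sigma^2>0$, and $K_i(X_1)=0$ for all $i\geq 3$. In particular $X_1$ is odd in the sense of the paper, and Theorem~\ref{twr:OddElement} yields that $Q_n\sim\chi^2(n-1,\sigma)$. For the ``only if'' direction, assume $X_1$ is symmetric and $Q_n\sim\chi^2(n-1,\sigma)$. Theorem~\ref{twr:OddElement} gives that $X_1$ is odd, so $K_2(X_1)=\sigma^2>0$ and $K_{2i}(X_1)=0$ for $i\geq 2$. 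The symmetry hypothesis together with the auxiliary fact above gives $K_{2i+1}(X_1)=0$ for all $i\geq 0$. Combining these, the only nonzero free cumulant of $X_1$ is $K_2(X_1)=\sigma^2$, and by \eqref{rtr} this forces $R_{X_1}(z)=\sigma^2 z$, which is the $R$-transform of the Wigner semicircular law of variance $\sigma^2$.

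There is no substantive obstacle here: the proposition is a clean corollary of Theorem~\ref{twr:OddElement}. The ``hard work'' has already been done in establishing that theorem, and the role of the symmetry assumption is precisely to kill the remaining degrees of freedom (the odd cumulants from order three onwards) that distinguish a general odd element from a semicircular one.
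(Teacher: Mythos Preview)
Your proposal is correct and follows essentially the same approach as the paper: invoke Theorem~\ref{twr:OddElement} to kill the higher even cumulants, then use symmetry to kill the odd cumulants, leaving only $K_2$. The paper's own proof is a single sentence that omits the ``if'' direction and the justification of the symmetry/odd-cumulant equivalence, but your added detail is all straightforward and accurate.
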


On the other hand, the free analogue of Ruben's second theorem
\cite{Ruben:1975} (see also \cite{Bondesson:1977}) does not hold:

\begin{prop}
Let $\X_1, \X_2,\dots$ denote free independently and identically distributed random variables with finite non-zero variance $\sigma^2$. Let $m, n$ denote 
distinct integers not less than 2.
 Then for $\Q_n/\sigma^2$ and $\Q_m/\sigma^2$
to be distributed as $\chi^2(n-1)$ and $\chi^2(m-1)$, respectively,
it is not necessary that $\X_1$ is semicircular.  
\label{twr:2}
\end{prop}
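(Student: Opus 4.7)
The plan is to deduce Proposition~\ref{twr:2} as an almost immediate corollary of Theorem~\ref{twr:OddElement}, combined with the Bercovici--Voiculescu construction of non-semicircular odd laws mentioned in the preliminary discussion of odd elements. The key point is that Theorem~\ref{twr:OddElement} does not just say ``if $\X$ is odd then $\Q_n\sim\chi^2(n-1,\sigma)$ for \emph{some} $n$'': the hypothesis that $\X$ is odd does not depend on $n$, and the conclusion then holds \emph{simultaneously for every} $n\geq 2$. So a single non-semicircular odd distribution will serve to refute the two-sample-size characterization at once.

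Concretely, I would proceed as follows. First, invoke the Bercovici--Voiculescu construction: there exists $\epsilon_0>0$ such that for every $0<\epsilon<\epsilon_0$ the sequence of prescribed free cumulants $K_1=0$, $K_2=\sigma^2$, $K_3=\epsilon$, $K_i=0$ for $i\geq 4$, is the free cumulant sequence of a genuine compactly supported probability measure $\mu_\epsilon$ on $\R$. Since $K_3\neq 0$, $\mu_\epsilon$ is not the semicircle law, yet all even cumulants of order $\geq 4$ vanish and $K_2>0$, so $\mu_\epsilon$ is odd in the sense of the definition preceding Theorem~\ref{twr:OddElement}.

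Next, let $\X_1,\X_2,\dots\in\A_{sa}$ be free copies of a random variable $\X$ with distribution $\mu_\epsilon$; in particular $\Var(\X)=K_2(\X)=\sigma^2$. By the ``if'' direction of Theorem~\ref{twr:OddElement} applied to each integer $k\geq 2$, the sample variance $\Q_k=\sum_{i=1}^k(\X_i-\overline{\X})^2$ has distribution $\chi^2(k-1,\sigma)$. Specializing to $k=n$ and $k=m$ and dividing by $\sigma^2$, one obtains $\Q_n/\sigma^2\sim\chi^2(n-1)$ and $\Q_m/\sigma^2\sim\chi^2(m-1)$ simultaneously, while $\X_1$ has distribution $\mu_\epsilon$ and is therefore not semicircular. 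This establishes that the two-sample-size hypothesis does not force semicircularity, proving the proposition.

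There is essentially no analytic obstacle in this argument, since all the hard work is carried by Theorem~\ref{twr:OddElement} (proved in Section~3) and by the realizability statement of Bercovici--Voiculescu \cite{BercoviciVoiculescu:1995} (also recalled earlier in the paper); the only point that deserves a sentence of comment in the actual write-up is that the freeness of the copies $\X_1,\X_2,\dots$ can always be realized in some tracial $W^*$-probability space via the reduced free product, so that $\Q_n$ and $\Q_m$ are legitimately defined self-adjoint (indeed positive) operators whose spectral distributions are what we claim.
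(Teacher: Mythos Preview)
Your proposal is correct and follows exactly the same route as the paper: take the Bercovici--Voiculescu odd law with $K_1=0$, $K_2=\sigma^2$, $K_3=\epsilon$, $K_i=0$ for $i\ge4$, and apply the ``if'' direction of Theorem~\ref{twr:OddElement} simultaneously for the two sample sizes $m$ and $n$. Your write-up is slightly more detailed than the paper's one-line argument, but the content is identical.
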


\begin{proof}[Proof of  Proposition~\ref{twr:1}]
 If  $\Q_n$
is distributed as $\chi^2(n-1,\sigma)$  then $\X_1$ is odd, but taking into account that $\X_1$ is symmetric we have that its odd central moments vanish, and therefore its odd cumulants higher than the first vanish, so $\X_1$ has Wigner semicircular distribution.
\\
\textit{Proof of  Proposition~\ref{twr:2}.}
  Assume that $K_1(\X_1) = 0$, $K_2(\X_1) = \sigma^2$, $K_3(\X_1) = \epsilon$ and $K_i(\X_1)=0$ for $i\geq 4$ where $\epsilon$ is small enough. By Theorem   \ref{twr:OddElement} we see that $\Q_n$ and $\Q_m$ have  $\chi^2(n-1,\sigma)$ and $\chi^2(m-1,\sigma)$ distribution respectively.

\end{proof}

\begin{Rem}
  In this paper we assume that the involved random variables are bounded, 
  that is $\X_i \in \A$, as was common practice for a long time.
  Recently however unbounded random variables, i.e., operators affiliated with
  the von Neumann algebra in question, came into the focus of research.
  This happened in particular in connection with certain characterization
  problems, see, e.g.,
  \cite{ChistyakovGoetzeLehner:2011,EjsmontFranzSzpojankowski:2015,
    ChistyakovGoetzeLehner:2016}.
  It follows from the following result Chistyakov and Goetze
  that for the characterization problems pertinent to the present paper
  the question of boundedness is unessential.
\end{Rem}

\begin{lemm}[{  \cite[Lemma~3.10]{ChistyakovGoetze:2011}}]
 \label{lemm:bounded}
 Assume that $\mu=\mu_1\boxplus\mu_2$, where $\mu$ has compact support. 
 Then $\mu_1$ and $\mu_2$ have compact support as well. 
\end{lemm}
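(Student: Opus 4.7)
The plan is to exploit Biane's subordination functions for the free additive convolution. Because $\mu = \mu_1 \boxplus \mu_2$, there exist analytic self-maps $\omega_1, \omega_2 : \mathbb{C}^+ \to \mathbb{C}^+$ satisfying $G_\mu(z) = G_{\mu_i}(\omega_i(z))$ for $i = 1, 2$, together with the asymptotic $\omega_i(z)/z \to 1$ as $z \to \infty$ non-tangentially. I would translate compactness of $\supp \mu$ into a boundary regularity statement for $\omega_i$, and then read off compactness of $\supp \mu_i$ from the subordination identity.

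Writing $\supp \mu \subseteq [-M, M]$, the Cauchy transform $G_\mu$ extends analytically across $\mathbb{R} \setminus [-M, M]$ and is real-valued there. Fix a real $x$ with $|x| > M$ and let $z \to x$ from $\mathbb{C}^+$; then $G_{\mu_i}(\omega_i(z)) = G_\mu(z) \to G_\mu(x) \in \mathbb{R}$. If $\omega_i(z)$ admitted a subsequential limit $w \in \mathbb{C}^+$, the formula $\Im G_{\mu_i}(w) = -\Im(w)\int |t-w|^{-2}\,d\mu_i(t)$ would force $G_{\mu_i}(w) \in \mathbb{C}^-$ strictly, contradicting the real limit. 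The asymptotic $\omega_i(z) \sim z$ at infinity rules out escape to $\infty$ for $z$ in a small half-disk around $x$. Hence $\omega_i(z) \to \omega_i(x) \in \mathbb{R}$, and by Schwarz reflection $\omega_i$ continues analytically across $\mathbb{R} \setminus [-M, M]$ with real values. The identity $G_{\mu_i}(\omega_i(x)) = G_\mu(x) \in \mathbb{R}$ with $\omega_i(x) \in \mathbb{R}$ then forces $\omega_i(x) \notin \supp \mu_i$, since $G_{\mu_i}$ has no real-analytic extension across its support (atoms give poles, absolutely continuous parts give nontrivial imaginary boundary values).

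Finally, $\omega_i(x) \sim x$ as $|x| \to \infty$ along $\mathbb{R}$ gives $\omega_i(x) \to \pm\infty$ as $x \to \pm\infty$. By connectedness, the images $\omega_i((M, \infty))$ and $\omega_i((-\infty, -M))$ are intervals of $\mathbb{R}$, disjoint from $\supp \mu_i$, and containing unbounded neighborhoods of $+\infty$ and $-\infty$ respectively. This bounds $\supp \mu_i$ both above and below, so $\mu_i$ has compact support, and symmetry completes the proof. The main obstacle I foresee is the boundary control of $\omega_i$ in the second paragraph: ruling out pathological behavior (non-tangential limits failing to exist, or the image escaping toward infinity in a neighborhood of some $x \in \mathbb{R} \setminus [-M, M]$) to guarantee that $\omega_i$ really extends continuously with real values. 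This step leans on the Nevanlinna-function structure of $\omega_i$ together with the specific subordination relation and is the one that requires actual analytic work as opposed to bookkeeping.
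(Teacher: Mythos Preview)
The paper does not actually prove this lemma: it is quoted verbatim as \cite[Lemma~3.10]{ChistyakovGoetze:2011} and used as a black box. So there is no ``paper's own proof'' to compare your attempt against.

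That said, your subordination approach is the natural one and is essentially how results of this type are established. One remark on the gap you flag yourself: the sentence ``The asymptotic $\omega_i(z)\sim z$ at infinity rules out escape to $\infty$ for $z$ in a small half-disk around $x$'' does not work as written, since the asymptotic concerns $z\to\infty$, not $z\to x$ finite. A cleaner way to exclude $\omega_i(z)\to\infty$ is to use the additive relation $\omega_1(z)+\omega_2(z)=z+F_\mu(z)$ together with $\Im\omega_i(z)\ge\Im z$: as $z\to x$ with $|x|>M$ the right-hand side has a finite real limit, and since both imaginary parts are nonnegative and sum to $\Im z+\Im F_\mu(z)\to 0$, both $\omega_i(z)$ are forced toward the real line; boundedness of each real part then follows from boundedness of their sum once you know (via your step~3 argument applied to \emph{both} $i$ simultaneously) that neither can drift into the open upper half-plane. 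Alternatively, observe directly that $G_\mu(x)\ne 0$ for $|x|>M$, so $G_{\mu_i}(\omega_i(z))\to G_\mu(x)\ne 0$, which is incompatible with $\omega_i(z)\to\infty$ since $|G_{\mu_i}(w)|\le 1/\Im w$ and the imaginary part of $\omega_i$ is controlled. With one of these fixes in place your outline goes through.
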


In terms of operators this means that
if $\X,\Y \in \widetilde{\mathcal{A}}_{sa}$ are free random variables
affiliated with $\mathcal{A}$ and 
such that  $\X+\Y\in \A_{sa}$, i.e., $\X+\Y$ is bounded,
then   $\X,\Y\in \A_{sa}$.
Now we will show that Theorem \ref{twr:OddElement} 
is true under  weaker conditions.
\begin{cor} 
 Let $\X_1, \X_2,\dots, \X_n \in \widetilde{\mathcal{A}}_{sa}$ 
 be selfadjoint free random variables and assume
 $\Q_n=\frac{1}{n}\sum_{i<j}(X_i-X_j)^2$ is bounded. Then all $X_i$ are bounded.
\end{cor}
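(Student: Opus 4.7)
The plan is a two-step reduction. First, I would show that every pairwise difference $\X_i - \X_j$ is bounded, and then I would invoke Lemma~\ref{lemm:bounded} to upgrade boundedness of differences to boundedness of the individual variables.

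For the first step, observe that $(\X_i - \X_j)^2$ is a positive selfadjoint operator affiliated with $\A$ for every pair $i < j$, and the identity
\[
n\,\Q_n - (\X_i - \X_j)^2 = \sum_{\substack{1 \leq k < l \leq n \\ (k,l) \neq (i,j)}} (\X_k - \X_l)^2
\]
exhibits the left-hand side as a sum of positive operators. Tested against vectors $\xi$ in a common core for the affiliated family $\X_1,\dots,\X_n$, this yields the quadratic-form bound
\[
\|(\X_i - \X_j)\xi\|^2 \;\leq\; n\,\langle \Q_n\xi,\xi\rangle \;\leq\; n\,\|\Q_n\|\,\|\xi\|^2.
\]
Since $\X_i - \X_j$ is closed, this forces its domain to equal $\H$ and the operator to be bounded with norm at most $\sqrt{n\|\Q_n\|}$.

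For the second step, fix $k$ and choose any $l \neq k$. By the freeness of the family $\X_1,\dots,\X_n$, the elements $\X_k$ and $-\X_l$ are free, so the distribution of the now-bounded operator $\X_k - \X_l$ is $\mu_{\X_k}\boxplus \mu_{-\X_l}$. This measure has compact support, and Lemma~\ref{lemm:bounded} then forces both $\mu_{\X_k}$ and $\mu_{-\X_l}$ to have compact support. In particular $\X_k \in \A_{sa}$. Repeating this for each $k$ completes the proof.

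The only delicate point is the use of form inequalities for possibly unbounded affiliated operators in step one, since the summands $(\X_k - \X_l)^2$ need not a priori share an obvious common invariant dense subspace. This obstacle is standard and is handled by working on a core supplied by the measurable-operator calculus in the finite von Neumann algebra $\A$, on which the algebraic identity above holds in the strong sense.
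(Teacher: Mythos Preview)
Your proposal is correct and follows essentially the same two-step strategy as the paper: first deduce that each $(\X_i-\X_j)^2$ is bounded from the positivity of the remaining summands in $n\Q_n$, hence $\X_i-\X_j$ is bounded, and then apply Lemma~\ref{lemm:bounded} to conclude that each $\X_i$ is bounded. The paper's proof is terse and simply asserts the first step; you supply the operator-theoretic details (form inequalities, closedness, the common core via measurable operator calculus) that justify it, which is a welcome elaboration but not a different argument.
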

\begin{proof}
Since $\X_i$ are self-adjoint and
$n\Q_n=\sum_{1\leq i \leq j \leq n} (\X_i-\X_j)^2$ is bounded,
it follows that $(\X_i-\X_j)^2$ is bounded
and hence also  $\X_i-\X_j$. 
By Lemma  \ref{lemm:bounded} we deduce that all $\X_i$ are bounded.
\end{proof}

 The proof of Ruben's theorem \cite{Ruben:1974} heavily relies  on  the symmetry
of random variables. Is it possible to drop the hypothesis that the
random variables are symmetric? Golikova   and Kruglov  \cite{GolikovaKruglov:2015}  
give a partial answer to this question -- instead of symmetry of $\X_1$ they assume infinite divisibility. The following is a free version of their result
which characterizes the classical normal law by the sample variance. 
We drop the assumption that $\X_i$ have  the same distribution, because  with this assumption the result follows directly from Theorem \ref{twr:OddElement} (we cannot use the Bercovici-Pata bijection  to prove it because it does not map classical chi-square to free chi-square distributions). 

\begin{prop}
Let $\X_1, \X_2,\dots \X_n$ denote free independent, freely infinitely divisible random variables with mean  $\tau(\X_1)=\tau(\X_2)= \dots =\tau(\X_n)$ and $\Var(\X_1)=\Var(\X_2)= \dots =\Var(\X_n)=1$. 
 Then if $\Q_n$ 
is distributed as free $\chi^2(n-1)$ if and only if  $\X_1, . . . , \X_n$ are
identically distributed Wigner semicircular random variables. 
\label{twr:kruglow}
\end{prop}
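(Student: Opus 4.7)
The direction ($\Leftarrow$) is immediate from Theorem \ref{twr:OddElement}: Wigner semicircular random variables of variance $\sigma^2=1$ are odd with $K_2=1$, so $\Q_n$ is distributed as $\chi^2(n-1,1) = \chi^2(n-1)$.

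For ($\Rightarrow$), the first step is to reduce to the centered case by exploiting that $\Q_n$ is invariant under the simultaneous shift $\X_i \mapsto \X_i - \tau(\X_i)$ (permitted because the means agree). The plan is then to apply the explicit cumulant formula for $\Q_n$ derived in the proof of Theorem \ref{twr:OddElement}, noting that the combinatorial derivation there (via noncrossing partitions and $R$-cyclicity) does not in fact require identical distributions: it expresses $K_m(\Q_n)$ as a universal polynomial in the cumulants $K_j(\X_i)$ in which the odd cumulants of the individual $\X_i$ do not appear. Since $\Q_n \sim \chi^2(n-1) = \nu(n-1,1)$ yields $K_m(\Q_n) = n-1$ for every $m \ge 1$, we obtain an infinite family of polynomial identities involving only the even cumulants $K_{2k}(\X_i)$.

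To convert these identities into the per-variable vanishing $K_{2k}(\X_i)=0$ for $k\ge 2$ and every $i$, I would invoke the free L\'evy--Khintchine representation of Bercovici--Voiculescu: each centered FID law $\mu_i$ has an $R$-transform of the form $R_{\mu_i}(z) = \int \frac{z}{1-tz}\,d\rho_i(t)$ for a finite positive Borel measure $\rho_i$, so that $K_{j+2}(\mu_i) = \int t^j\,d\rho_i(t)$. In particular the even cumulants $K_{2k}(\mu_i)=\int t^{2k-2}\,d\rho_i$ are nonnegative, and by the Cauchy--Schwarz inequality on moments the odd cumulants are controlled by the neighbouring even ones. This nonnegativity, combined with the polynomial identities just obtained, pins down $K_{2k}(\mu_i)=0$ for all $k\ge 2$. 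Then $K_4(\mu_i)=\int t^2 \,d\rho_i=0$ forces $\rho_i = K_2(\mu_i)\,\delta_0$, so $K_j(\mu_i)=0$ for $j\ge 3$; together with $K_1(\mu_i)=0$ and $K_2(\mu_i)=1$ this gives standard semicircularity, and all $\X_i$ coincide in distribution.

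The main obstacle is the extraction of the per-variable vanishing from the identities $K_m(\Q_n)=n-1$. The case $n=2$ is a clean template: $\Q_2 = (\X_1-\X_2)^2/2 \sim \chi^2(1)$ means $(\X_1-\X_2)/\sqrt{2}$ is standard semicircular, hence $K_m(\X_1-\X_2) = K_m(\X_1) + (-1)^m K_m(\X_2)$ must vanish for $m\ge 3$, and the FID nonnegativity $K_{2k}(\X_i)\ge 0$ promotes the apparent cancellation $K_{2k}(\X_1)=-K_{2k}(\X_2)$ to genuine annihilation for $k\ge 2$. For general $n$ the same mechanism should fall out of the cumulant formula, but the combinatorial bookkeeping needed to isolate the contribution of each $\X_i$ and to then close the argument through the FID positivity is the delicate part of the proof.
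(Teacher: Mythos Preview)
Your proposal has a genuine gap at its central claim. You assert that the combinatorial derivation of $K_m(\Q_n)$ ``does not in fact require identical distributions'' and that odd cumulants of the individual $\X_i$ drop out. This is precisely what fails. The cancellation of odd cumulants (Lemma~\ref{lem:oddcancellation}) rests on Lemma~\ref{lemm:sumyobcietelematgeneral}, whose proof exploits the symmetry under permuting i.i.d.\ variables; the paper explicitly notes in the remark following Lemma~\ref{lem:oddcancellation} that for the sample variance ``we have to assume identical distribution of the involved random variables for the cancellation phenomenon to take place.'' Likewise the $R$-cyclic formula in Proposition~\ref{prop:CykliczneVariancja} is stated only for even elements. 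So for non-identically-distributed $\X_i$ you have no general expression for $K_m(\Q_n)$ that is free of odd cumulants, and the infinite family of identities you want to exploit is not available. You also concede that even granting those identities, the extraction of per-variable vanishing for general $n$ is ``the delicate part'' and leave it undone.

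The paper's proof avoids all of this by working only with $K_2(\Q_n)$. For centered arguments $Y_i$ the product formula gives
\[
K_2(Y_1Y_2,Y_3Y_4)=K_2(Y_1,Y_4)\,K_2(Y_2,Y_3)+K_4(Y_1,Y_2,Y_3,Y_4),
\]
so no odd cumulants appear at this level regardless of whether the $\X_i$ are identically distributed; this is automatic from centering, not from the cancellation lemma. A direct computation then yields
\[
n-1 = K_2(\Q_n) = (n-1) + \frac{(n-1)^2}{n^2}\sum_{i=1}^n K_4(\X_i),
\]
hence $\sum_i K_4(\X_i)=0$. Now the L\'evy--Khinchin representation gives $K_4(\X_i)=\int t^2\,d\rho_i\geq 0$, so each $K_4(\X_i)=0$, forcing $\rho_i=\delta_0$ and semicircularity. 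This is exactly the endgame you sketched, but reached via a single low-order cumulant identity rather than the unavailable full formula.
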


We conclude with a free version of a the following result of
  Kagan and Letac \cite{KaganLetac:1999}:
Fix an integer $n\geq 3$ and let $\X_1, \X_2,\dots, \X_n$ 
be independent  identically distributed  random variables.  
Consider the linear subspace $E=\mathbf{1}^\perp$ of Euclidean space $\R^n$ , i.e.,
the hyperplane $E=\{(a_1, a_2,\cdots,a_n ) : a_1+a_2+\cdots+a_n =0\}$. 
Then the following characterizations hold:
\begin{enumerate}[(i)]
\item  If the distribution of the $E$-valued random variable
 $$V=(\X_1-\overline{\X},\dots,\X_n-\overline{\X})$$
is invariant under all rotations of the Euclidean space $E$, then the $\X_i$'s
are normally distributed.
\item If the distribution of the random variable 
$$\sum_{i=1}^n(\X_i-\overline{\X}+a_i)^2$$
does not change as the real parameters $a_i$ vary on a sphere (i.e.,
the euclidean length $\norm{a}^2=a_1^2+a^2_2+\cdots+a_n^2 $ remains constant), 
then the $\X_i$'s are normally distributed.
\end{enumerate}
A key ingredient of the proof of these classical results is played by
\emph{Marcinkiewicz' theorem}.
As we discussed above, Marcinkiewicz' theorem has no analogue in free probability
and we will use different methods to prove the following free version of
 \cite{KaganLetac:1999}.
 This method also works in classical probability if  we assume that all moment exists. 
\begin{prop} 
Let $n$ be a fixed integer $n\geq 3$. Let $\X_1, \X_2,\dots, \X_n$ be free identically distributed  random variables.   
\begin{enumerate}
 \item  If for all $a \in E\subset \R^n$ 
  the distribution of the random variable 
  $$\sum_{i=1}^n(\X_i-\overline{\X}+a_i)^2$$
  depends only on $\norm{a}^2=a_1^2+a^2_2+\cdots+a_n^2 $, 
  then the $\X_i$'s obey the semicircle law.
\item If the distribution of the $E$-valued random variable
 $$V=(\X_1-\overline{\X},\dots,\X_n-\overline{\X})$$
is invariant under all rotations of the Euclidean space $E$, then the $\X_i$'s
obey the semicircle law.
\end{enumerate}

\label{twr:OdpowidenikKaganLetac} 
\end{prop}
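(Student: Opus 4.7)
The strategy will be cumulant-theoretic. Writing $V_i = \X_i - \overline{\X} = \sum_j c_{ij}\X_j$, where $c = I - n^{-1}J$ is the orthogonal projection of $\R^n$ onto $E$, multilinearity of free cumulants combined with the vanishing of mixed free cumulants among the free iid family $\{\X_j\}$ yields the key identity
\[
K_k(V_{i_1},V_{i_2},\ldots,V_{i_k}) \;=\; K_k(\X_1)\,\sum_{j=1}^n c_{i_1 j}\, c_{i_2 j}\cdots c_{i_k j}.
\]
In other words, each joint free cumulant tensor of $V$ of order $k$ factorizes as $K_k(\X_1)\cdot T^{(k)}$, where $T^{(k)}_{i_1\ldots i_k} = \sum_{j=1}^n c_{i_1 j}\cdots c_{i_k j}$ is a symmetric tensor depending only on the simplex geometry of $E$.

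For the second assertion I would observe that rotation invariance of the joint distribution of $V$ is equivalent to each $K_k(\X_1)\cdot T^{(k)}|_{E^{\otimes k}}$ being an $O(E)$-isotropic tensor. When $k \geq 3$ is odd, the only isotropic tensor of odd degree on $E$ (of dimension $n-1 \geq 2$) is the zero tensor, yet $T^{(k)}|_E$ is nonzero (take $i_1=\cdots=i_k$), so $K_k(\X_1) = 0$. When $k \geq 4$ is even, I view $T^{(k)}|_E = \sum_{j=1}^n (ce_j)^{\otimes k}$ as a sum over the vertices of a regular $(n-1)$-simplex and compare with the span of symmetrized Kronecker-$\delta$ pairings via explicit power-sum computations; this shows $T^{(k)}|_E$ fails to be isotropic for all $(n,k)$ with $n \geq 3$ outside a small list of sporadic low-parameter cases, so $K_k(\X_1) = 0$ there as well, making $\X_1$ semicircular.

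For the first assertion the hypothesis is strictly weaker: only $K_m(\Q_n + 2L_a)$ with $L_a = \sum_i a_i \X_i$ (using $a \in E$) is required to be a polynomial in $\|a\|^2$ alone for every $m$. I would expand
\[
K_m(\Q_n + 2L_a) \;=\; \sum_{\epsilon\in\{0,1\}^m} 2^{|\epsilon|}\,K_m(Z_{\epsilon_1},\ldots,Z_{\epsilon_m}), \qquad Z_0=\Q_n,\;Z_1=L_a,
\]
and group terms by their homogeneous degree $d$ in $a$; the hypothesis forces every degree-$d$ component to be a polynomial in $\|a\|^2$ alone. The extremal slice $\epsilon=(1,\ldots,1)$ yields the pure cumulant $2^m K_m(\X_1)\sum_i a_i^m$, and since $\sum_i a_i^m$ is non-constant on the unit sphere in $E$ for every odd $m \geq 3$ when $n \geq 3$, one immediately gets $K_m(\X_1) = 0$ for such $m$. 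The remaining even orders are pinned down by the mixed cumulants $K_m(\Q_n,\ldots,\Q_n,L_a,\ldots,L_a)$, which unpack via the noncrossing-partition moment-cumulant formula (after substituting $\Q_n = \sum_{j,k} c_{jk}\X_j\X_k$) into explicit polynomials in $K_2(\X_1),K_3(\X_1),\ldots$ whose $a$-dependence must again collapse to a polynomial in $\|a\|^2$.

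The main obstacle will be this last step. In small borderline cases---for instance $(n,m) = (3,4)$, where the accidental identity $\sum_i a_i^4 = \tfrac12\|a\|^4$ holds on $E \subset \R^3$---the extremal slice provides no information on $K_4(\X_1)$, so its vanishing must be extracted from the mixed-cumulant equations. I expect the $R$-cyclicity viewpoint and noncrossing-partition combinatorics developed in earlier sections of the paper to be the right organizing tools, and the genuinely non-invariant constraints at higher even orders (where $\sum_i a_i^{2k'}$ is non-constant on the sphere in $E$) to cascade back, through the coupled mixed-cumulant equations, to force every $K_{2k}(\X_1)$, $k \geq 2$, to vanish, completing the proof.
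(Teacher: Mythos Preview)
Your core strategy coincides with the paper's: both parts reduce to the claim that if $K_r(\X_1)\sum_i \theta_i^r$ is constant on the unit sphere of $E$ for every $r\geq 3$, then $K_r(\X_1)=0$ for $r\geq 3$. The paper reaches this reduction far more directly than you do. For Part~(1) it writes $a=\norm{a}\theta$ and observes that for $r\geq 2$
\[
\frac{1}{\norm{a}^r}\,K_r\Bigl(\sum_i(\X_i-\overline{\X}+a_i)^2\Bigr)
= K_r\Bigl(\frac{1}{\norm{a}}\,\Q_n+2\sum_i\theta_i\X_i\Bigr)
\xrightarrow{\ \norm{a}\to\infty\ }
2^r K_r(\X_1)\sum_i\theta_i^r,
\]
which isolates your ``extremal slice'' in one line, with no multilinear expansion over $\{0,1\}^m$ or grouping by degree. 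For Part~(2) it simply notes that rotational invariance of $V$ makes the distribution of $\langle\theta,V\rangle=\sum_i\theta_i\X_i$ independent of the unit vector $\theta\in E$, yielding the same scalar condition and bypassing your tensor-isotropy analysis entirely.

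You are right that this scalar argument leaves a gap at $(n,r)=(3,4)$: on the circle $E\subset\R^3$ one has $\sum_i\theta_i^4\equiv\tfrac12$, so no constraint on $K_4$ follows. The paper asserts the implication without addressing this exception. Your proposed patch for Part~(1)---recovering $K_4=0$ from the lower-degree mixed cumulants $K_m(\Q_n,\dots,\Q_n,L_a,\dots,L_a)$---is plausible but not carried out. For Part~(2) the difficulty is sharper: the tensor $T^{(4)}|_E$ is genuinely $O(E)$-isotropic when $n=3$ (polarize the identity $\sum_i\theta_i^4=\tfrac12\norm{\theta}^4$ on $E$), and since the hypothesis concerns only $V$ there is no mixed-cumulant information to fall back on; if a probability law with only $K_2$ and $K_4$ nonzero exists, Part~(2) would actually fail at $n=3$.
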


\subsection{Noncrossing Partitions}
Let $S$ be finite subset of  $\N$.
A partition of $S$ is a set of mutually disjoint subsets
(also called \emph{blocks}) $B_1,B_2,\dots,B_k\subseteq S$ 
whose union is $S$. Any partition $\pi$ defines an equivalence relation on $S$,
denotes $\sim_\pi$, such that the equivalence classes are the blocks $\pi$. 
That is, $i\sim_\pi j$ if $i$ and $j$ belong to the same block of $\pi$.
A partition $\pi$ is called \emph{noncrossing} 
if different blocks do not interlace, i.e., there is no quadruple of
elements $i<j<k<l$ such that $i\sim_\pi k$ and $j\sim_\pi l$ 
but $i\not\sim_\pi j$. 

The set of non-crossing partitions of $S$ is denoted by $\NC(S)$,
in the case where $S=[n]:=\{1, \dots , n\}$ we write
$\NC(n):=\NC([n])$. 
$\NC(n)$ is a poset under refinement order, where we say $\pi\leq \rho$ if
every block of $\pi$ is contained in a block of $\rho$.
It turns out that $\NC(n)$ is in fact a lattice, 
see \cite[Lecture~9]{NicaSpeicher:2006}.

The maximal element of $\NC(n)$ under this order
is denoted by  $\hat{1}_{n}$. It  is the partition consisting 
of only one block. On the other hand the minimal element $\hat{0}_n$ 
is the unique
partition where every block is a singleton.

Sometimes it is convenient to visualize partitions as diagrams, for example
$\hat{1}_n=
\begin{picture}(26,6.5)(1,0)
  \put(2,0){\line(0,1){7.5}}
  \put(8,0){\line(0,1){7.5}}
  \put(10,0){$\cdots$}
  \put(26,0){\line(0,1){7.5}}
  \put(2,7.5){\line(1,0){24}}
\end{picture}
$
and $\hat{0}_n=
\begin{picture}(26,3.5)(1,0)
  \put(2,0){\line(0,1){4.5}}
  \put(8,0){\line(0,1){4.5}}
\put(10,0){$\cdots$}
  \put(26,0){\line(0,1){4.5}}
  \put(2,4.5){\line(1,0){0}}
  \put(8,4.5){\line(1,0){0}}
  \put(14,4.5){\line(1,0){0}}
  \put(20,4.5){\line(1,0){0}}
  \put(26,4.5){\line(1,0){0}}
\end{picture}$.

\subsection{Some Special Notations}
We will be concerned with certain special classes of noncrossing partitions.
If $n$ is even we denote by  $\NCeven(n)$ 
the subset of even noncrossing partitions,
where we say that a partition is even if all its blocks have even cardinality.
Even more specific we denote by $\NC_2(n)$ is the set of all noncrossing pair
partitions, i.e., partitions where every block has size $2$.

Two specific minimal pair partitions will play a particularly important role,
namely
$\onetwo{r}=
\begin{picture}(44,3.5)(1,0)
  \put(2,0){\line(0,1){4.5}}
  \put(8,0){\line(0,1){4.5}}
  \put(14,0){\line(0,1){4.5}}
  \put(20,0){\line(0,1){4.5}}
\put(21.5,0){$\cdots$}
  \put(38,0){\line(0,1){4.5}}
  \put(44,0){\line(0,1){4.5}}
  \put(2,4.5){\line(1,0){6}}
  \put(14,4.5){\line(1,0){6}}
  \put(38,4.5){\line(1,0){6}}
\end{picture}\in\NC(2r)$, which is a kind of blow up
of $\hat{1}_r$ and its shift $\pispecial{r}=
\begin{picture}(56,6.5)(1,0)
  \put(2,0){\line(0,1){7.5}}
  \put(8,0){\line(0,1){4.5}}
  \put(14,0){\line(0,1){4.5}}
  \put(20,0){\line(0,1){4.5}}
  \put(26,0){\line(0,1){4.5}}
  \put(44,0){\line(0,1){4.5}}
  \put(50,0){\line(0,1){4.5}}
  \put(56,0){\line(0,1){7.5}}
\put(28.0,0){$\cdots$}
  \put(8,4.5){\line(1,0){6}}
  \put(20,4.5){\line(1,0){6}}
  \put(44,4.5){\line(1,0){6}}
  \put(2,7.5){\line(1,0){54}}
\end{picture}\in\NC(2r)$.

In the proof of Theorem~\ref{twr:OddElement} we will use telescoping argument
and put a filtration on $\NC(n)$ by avoiding certain blocks. For this
purpose we introduce the following notation.

For a subset $B\subseteq \N$ let
$\NC^{B}(n):=\{\pi\in \NC(n): B\in\pi\}$, i.e., the collection
of noncrossing partitions which contain $B$ as a block.
On the other hand, 
for a family $B_1,B_2,\dots,B_m\subseteq \N$ of subsets 
let 
$\NC_{B_1,\dots,B_m}(S):=\{\pi\in \NC(S):\pi\cap\{B_1,\dots,B_m\}=\emptyset\}$, 
i.e., the collection of noncrossing
partition which do not contain any $B_i$ as a block.
Finally, combining the two notations we define
$\NC^{B}_{B_1,\dots,B_m}(n):=\NC^{B}(n)\cap\NC_{B_1,\dots,B_m}(n)$.

\subsection{Kreweras Complements}
Kreweras \cite{Kreweras:1972} discovered an interesting antiisomorphism
of the lattice $\NC(n)$, now called the \emph{Kreweras complementation map},
of which we will need two variants.
Given a noncrossing partition $\pi$ of $\{1,2,\dots,n\}$,
the \emph{left Kreweras complement} $\Krewl\pi$ is the maximal
noncrossing partition of the ordered set $\{\bar{1},\bar{2},\dots,\bar{n}\}$
such that $\pi\cup\Krewl\pi$ is a noncrossing partition of
the interlaced set
$\{\bar{1},1,\bar{2},2,\dots,\bar{n},n\}$.
Similarly, the \emph{right Kreweras complement} $\Krewr\pi$ is the maximal
noncrossing partition of the ordered set $\{\bar{1},\bar{2},\dots,\bar{n}\}$
such that $\pi\cup\Krewr\pi$ is a noncrossing partition of
the interlaced set
$\{1,\bar{1},2,\bar{2},\dots,n,\bar{n}\}$.
It is then clear that $\Krewr\circ\Krewl=\mathrm{id}$ and it can be shown
that 
\begin{equation}
\label{eq:cardKrew}
\abs{\Krewr\pi}=\abs{\Krewl\pi}=n+1-\abs\pi
.
\end{equation}

Finally we define the \emph{extended Kreweras complement} $\Krewlr\pi$
to be the maximal
noncrossing partition of the ordered set $\{\bar{0},\bar{1},\dots,\bar{n}\}$
such that $\pi\cup\Krewlr\pi$ is a noncrossing partition of
the interlaced set
$\{\bar{0},1,\bar{1},2,\bar{2},\dots,n,\bar{n}\}$.
The extended Kreweras complement is always irreducible,
i.e., $\bar{0}$ and $\bar{n}$ are in the same block of $\Krewlr\pi$. 
In fact it is obtained by joining $\bar{0}$ to the last block of $\Krewr\pi$, 
i.e., the block containing $\bar{n}$,
or by joining $n+1$ to the first block of $\Krewl\pi$.
The following observation is useful for recursive proofs involving 
the Kreweras complement(s). 
\begin{lemm}
  \label{lemm:kreweras}
  Let $\pi\in\NC(n)$ and $B=\{j_1,j_2,\dots,j_p=n\}$ be its last block.
  Let $\pi_1,\pi_2,\dots,\pi_p$ be the restrictions of $\pi$ to the
  maximal intervals of $\{1,2,\dots,n\}\setminus B$ as shown in
  the following picture:
  $$
\begin{picture}(112,13.0)(1,0)
\put(-1,0){$\pi_1$}
\put(16,0){\line(0,1){13.0}}
\put(23,0){$\pi_2$}
\put(40,0){\line(0,1){13.0}}
\put(47,0){$\pi_3$}
\put(64,0){\line(0,1){13.0}}
\put(69,0){$\dotsm$}
\put(69,10){$\dotsm$}
\put(88,0){\line(0,1){13.0}}
\put(95,0){$\pi_p$}
\put(112,0){\line(0,1){13.0}}
\put(4,7.0){\line(1,0){0}}
\put(28,7.0){\line(1,0){0}}
\put(52,7.0){\line(1,0){0}}
\put(76,7.0){\line(1,0){0}}
\put(100,7.0){\line(1,0){0}}
\put(16,13.0){\line(1,0){52}}
\put(84,13.0){\line(1,0){28}}
\end{picture}
  $$
  Then the left Kreweras complement of $\pi$ is the concatenation
  of the extended Kreweras complements of the subpartitions $\pi_j$:
  $$
  \Krewl\pi = \Krewlr{\pi_1}\,\Krewlr{\pi_2}\dotsm\Krewlr{\pi_p}.
  $$
\end{lemm}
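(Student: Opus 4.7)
The plan is to use the fact that $B$ is a block of the noncrossing partition $\pi$ to simultaneously decompose $\pi$ and the interlaced sequence $\bar{1},1,\bar{2},2,\dots,\bar{n},n$ into $p$ pieces, and then show that the left Kreweras complement respects this decomposition.

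First I would note that since $\pi$ is noncrossing and $B=\{j_1<\dots<j_p=n\}$ is one of its blocks, every other block of $\pi$ must be entirely contained in a single gap interval $I_k=\{j_{k-1}+1,\dots,j_k-1\}$ (setting $j_0:=0$); hence $\pi=\{B\}\sqcup\pi_1\sqcup\dots\sqcup\pi_p$ with $\pi_k$ the restriction of $\pi$ to $I_k$. Turning to the interlaced sequence defining $\Krewl\pi$, the elements $j_1,\dots,j_p$ of $B$ split its barred portion into $p$ consecutive arcs, where arc $k$ consists of the barred elements $\overline{j_{k-1}+1},\dots,\bar{j_k}$ and, in the global interlaced order, reads
\[
\overline{j_{k-1}+1},\;j_{k-1}+1,\;\overline{j_{k-1}+2},\;j_{k-1}+2,\;\dots,\;j_k-1,\;\bar{j_k}.
\]
Under the obvious order-preserving relabeling $\overline{j_{k-1}+1+i}\leftrightarrow\bar{i}$ and $j_{k-1}+i\leftrightarrow i$, this is exactly the interlaced sequence $\bar{0},1,\bar{1},2,\dots,m_k,\bar{m_k}$ (with $m_k=\abs{I_k}$) used in the definition of the extended Kreweras complement $\Krewlr{\pi_k}$.

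Next I would establish confinement: any block of $\Krewl\pi$ must lie entirely inside a single arc, for otherwise it would straddle some $j_\ell\in B$ and produce a crossing with $B$, contradicting noncrossingness of $\pi\cup\Krewl\pi$. Once this is in hand, I would invoke the maximality in the definition of $\Krewl\pi$: its restriction to arc $k$ is necessarily the largest noncrossing partition of the barred elements in that arc whose union with $\pi_k$ is noncrossing on the local interlaced sequence, which under the identification above is precisely $\Krewlr{\pi_k}$. Concatenating over $k=1,\dots,p$ then yields the claimed formula.

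The main obstacle I expect is purely bookkeeping: keeping straight the two labellings of the barred elements in each arc (global $\overline{j_{k-1}+i}$ versus local $\bar{i}$), and handling the degenerate cases where some $I_k=\emptyset$ (then $\pi_k$ is the empty partition and $\Krewlr{\pi_k}=\{\{\bar{0}\}\}$ contributes just one singleton to the concatenation). Beyond the arc-confinement observation no new idea seems necessary, and the cardinality check $\sum_k(\abs{I_k}+1)=(n-p)+p=n$ confirms that both sides of the claimed equality live on the same underlying set of barred elements.
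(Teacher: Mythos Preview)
The paper states this lemma without proof, treating it as an elementary observation about the recursive structure of Kreweras complements. Your argument is correct and supplies exactly the kind of verification the paper omits: the arc-confinement step (a block of $\Krewl\pi$ straddling some $j_\ell$ would cross $B$) together with the transfer of maximality to each arc is the natural way to establish the claim, and your relabeling matches the definition of the extended complement precisely.
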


\subsection{Free Cumulants}

Let $\IC \langle \X_{1},\dots ,\X_{n}   \rangle$ denote
the non-commutative ring of polynomials in variables $\X_{1},\dots
,\X_{n}\in\A $.
The free  cumulants are multilinear maps $K_r : \mathcal{A}^r\to\IC$ 
defined  implicitly by the relation (connecting them with mixed moments)
\begin{align}
\tau(\X_{1}\X_{2}\dots \X_{n}) = \sum_{\pi \in \NC(n)}K_{\pi}(\X_{1},\X_{2},\dots ,\X_{n}),\label{eq:DefinicjaKumulant}
\end{align}
where 
\begin{align}
K_{\pi}(\X_{1},\X_{2},\dots ,\X_{n}):=\Pi_{B \in \pi}K_{|B|}(\X_{i}:i \in B) \label{eq:DefinicjaProduktuKumulant}
\end{align}
and $\NC(n)$ is the set of all non-crossing partitions of $\{1, 2,\dots, n \}$
(see \cite{NicaSpeicher:2006}). Sometimes we will write $K_{r}(\X)=K_{r}(
\X,\dots ,\X )$. 

Free cumulants provide the most important technical tool to investigate
free random variables. 
This is due to the basic property of \emph{vanishing of mixed cumulants}. 
By this we mean the fact that 
$$
K_r(X_1,X_2,\dots,X_n)=0
$$
for any family of random variables $X_1,X_2,\dots,X_n$ which can be partitioned
into two free subsets.

For free sequences this can be reformulated as follows.
Let $(X_i)_{i\in\N}$ be a sequence of free random variables and
$h:[r]\to\N$ a map. We denote by $\ker h$ the set partition
which is induced by the equivalence relation 
$$
k\sim_{\ker h} l
\iff
h(k)=h(l)
.
$$
Similarly, for a multiindex $\underline{i} = i_1i_2\dots i_n$ we denote its
kernel
$\ker\underline{i}$ by the relation $k\sim l$ if $i_k=i_l$.

Using this notation, we have that
\begin{equation}
  \label{eq:kerh>=pi}
  K_\pi(X_{h(1)},X_{h(2)},\dots,X_{h(r)})=0
  \text{ unless $\ker h\geq \pi$.}
\end{equation}

Our main technical tool is the free version,
due to Krawczyk and Speicher~\cite{KrawczykSpeicher:2000} (see also   \cite[Theorem 11.12]{NicaSpeicher:2006}), of the classical formula of 
James/Leonov and Shiryaev \cite{James:1958,LeonovShiryaev:1959} which
expresses cumulants of products in terms of individual cumulants.
\begin{theo}
\label{thm:krawczyk}
Let
$r,n \in \N$ and $ i_1 < i_2 < \dots < i_r = n$ be given and let
$$\rho=\{(1,\dots,i_1),\dots,(i_{r-1}+1,\dots,i_r)\}\in \NC(n)$$ 
be the induced interval partition.
Consider now random variables $\X_1,\dots,\X_n\in\A$.
Then the free cumulant of the products can be expanded
as follows:
\begin{align} 
K_r(\X_1\dots \X_{i_1},\dots,\X_{i_{r-1}+1}\dots\X_n)=\sum_{\substack{\pi\in \NC(n) \\ \pi\vee\rho=\hat{1}_{n}} }K_\pi({\X_1,\dots,\X_n}). 
\end{align} \label{twr:produktargumentow}
\end{theo}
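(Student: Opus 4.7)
The plan is to prove this known formula by a double M\"obius inversion on the lattice of noncrossing partitions, using only the defining moment-cumulant relation \eqref{eq:DefinicjaKumulant} and the standard identification of intervals in $\NC$.

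First I would write $Y_j:=\X_{i_{j-1}+1}\dotsm\X_{i_j}$ (with $i_0:=0$), so that the left-hand side equals $K_r(Y_1,\dots,Y_r)$. For each $\theta\in\NC(r)$ consider the partitioned moment
\[
\tau_\theta[Y_1,\dots,Y_r]:=\prod_{B\in\theta}\tau\!\left(\prod_{j\in B}Y_j\right).
\]
Since $\rho$ is an interval partition, every block $B=\{j_1<\dots<j_s\}\in\theta$ corresponds to the ordered union of $\rho$-intervals $K_B=\bigcup_{j\in B}\rho_j\subset[n]$, and $\prod_{j\in B}Y_j$ equals $\prod_{k\in K_B}\X_k$ taken in the natural order inherited from $[n]$. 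Applying the moment-cumulant formula to each factor and multiplying out then yields
\[
\tau_\theta[Y_1,\dots,Y_r]=\sum_{\substack{\sigma\in\NC(n)\\ \sigma\le\widehat\theta}}K_\sigma(\X_1,\dots,\X_n),
\]
where $\widehat\theta:=\{K_B:B\in\theta\}\in\NC(n)$ is the ``blow-up'' of $\theta$ along $\rho$.

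Next I perform M\"obius inversion in $\NC(r)$ to obtain $K_r(Y_1,\dots,Y_r)=\sum_\theta\mu(\theta,\hat 1_r)\,\tau_\theta[Y_1,\dots,Y_r]$, substitute the previous identity, and swap the order of summation:
\[
K_r(Y_1,\dots,Y_r)=\sum_{\sigma\in\NC(n)}K_\sigma(\X_1,\dots,\X_n)\sum_{\substack{\theta\in\NC(r)\\ \sigma\le\widehat\theta}}\mu(\theta,\hat 1_r).
\]
The assignment $\theta\mapsto\widehat\theta$ is a M\"obius-preserving lattice isomorphism $\NC(r)\xrightarrow{\sim}[\rho,\hat 1_n]\subset\NC(n)$; since $\widehat\theta\ge\rho$, the condition $\sigma\le\widehat\theta$ is equivalent to $\sigma\vee\rho\le\widehat\theta$. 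Hence the inner sum becomes $\sum_{\alpha\in[\sigma\vee\rho,\hat 1_n]}\mu(\alpha,\hat 1_n)=\delta_{\sigma\vee\rho,\hat 1_n}$ by the defining property of the M\"obius function, so only those $\sigma\in\NC(n)$ with $\sigma\vee\rho=\hat 1_n$ survive, which gives the asserted formula.

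The main bookkeeping obstacle is verifying that $\theta\mapsto\widehat\theta$ is indeed a lattice isomorphism onto $[\rho,\hat 1_n]$ which preserves the noncrossing structure and the M\"obius function; this uses in an essential way that $\rho$ is an interval partition, so that concatenating its blocks in the order dictated by a block $B\in\theta$ reproduces the ambient order on $[n]$ and introduces no crossings. Once this identification is in place, the remainder is a routine double application of M\"obius inversion.
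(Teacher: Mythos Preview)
Your argument is correct, but note that the paper does not actually give its own proof of this theorem: it is merely quoted as a known tool, with references to Krawczyk--Speicher and to \cite[Theorem~11.12]{NicaSpeicher:2006}. Your double M\"obius inversion via the lattice isomorphism $\theta\mapsto\widehat\theta$ between $\NC(r)$ and the interval $[\rho,\hat 1_n]$ is precisely the standard proof given in the latter reference, so there is nothing to compare---you have reproduced the textbook argument the authors are citing.
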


In the special case of products of pairs of free elements this yields
the following formula for multiplicative free convolution.
\begin{theo}[{\cite[Theorem~14.4]{NicaSpeicher:2006}}]
  \label{thm:multfreeconvolution}
  Let $\{X_1,X_2,\dots,X_r\}$ and  $\{Y_1,Y_2,\dots,Y_r\}$ be two
  mutually free sets of random variables, then
  $$
  K_r(X_1Y_1,X_2Y_2,\dots,X_rY_r)=\sum_{\pi\in\NC(r)} K_\pi(X_1,X_2,\dots,X_r)\,
  K_{\Krewr{\pi}}(Y_1,Y_2,\dots,Y_r)
  $$
\end{theo}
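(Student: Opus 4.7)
The plan is to apply the Krawczyk--Speicher formula (Theorem~\ref{twr:produktargumentow}) with $n=2r$ to the alternating product $X_1 Y_1 X_2 Y_2 \dotsm X_r Y_r$, and then simplify the resulting sum using freeness together with the defining property of the right Kreweras complement. First I introduce the interval partition $\rho=\{\{1,2\},\{3,4\},\dots,\{2r-1,2r\}\}\in\NC(2r)$, which groups each $X_i$ at odd position $2i-1$ with the matching $Y_i$ at even position $2i$. Applying Theorem~\ref{twr:produktargumentow} directly gives
$$
K_r(X_1 Y_1,\dots,X_r Y_r)=\sum_{\substack{\pi\in\NC(2r)\\ \pi\vee\rho=\hat{1}_{2r}}} K_\pi(X_1,Y_1,X_2,Y_2,\dots,X_r,Y_r).
$$

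Next I invoke the vanishing of mixed cumulants. Since $\{X_1,\dots,X_r\}$ and $\{Y_1,\dots,Y_r\}$ generate mutually free subalgebras, $K_\pi$ vanishes unless every block of $\pi$ lies entirely in the odd positions or entirely in the even positions. Hence the surviving $\pi$ split as $\pi=\pi_X\sqcup\pi_Y$; identifying odd positions with $\{1,\dots,r\}$ and even positions with an interlaced copy $\{\bar 1,\dots,\bar r\}$, noncrossingness of $\pi$ on $\{1,\bar 1,\dots,r,\bar r\}$ is precisely the condition $\pi_Y\leq\Krewr{\pi_X}$, and the cumulant factors as $K_\pi=K_{\pi_X}(X_1,\dots,X_r)\,K_{\pi_Y}(Y_1,\dots,Y_r)$. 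The combinatorial heart of the proof is now the claim that, among such noncrossing pairs, the join $\pi\vee\rho$ equals $\hat{1}_{2r}$ if and only if $\pi_Y=\Krewr{\pi_X}$. I would establish this by block-counting on the bipartite incidence graph $\Gamma$ whose vertex set is $\pi_X\sqcup\pi_Y$ and whose $r$ edges come from the blocks of $\rho$, noting that connected components of $\Gamma$ correspond exactly to blocks of $\pi\vee\rho$. Thus $\pi\vee\rho=\hat{1}_{2r}$ forces $|\pi_X|+|\pi_Y|\leq r+1$, while $\pi_Y\leq\Krewr{\pi_X}$ combined with \eqref{eq:cardKrew} gives $|\pi_Y|\geq|\Krewr{\pi_X}|=r+1-|\pi_X|$, i.e.\ $|\pi_X|+|\pi_Y|\geq r+1$. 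Equality in both pins down $\pi_Y=\Krewr{\pi_X}$, and in that case $\Gamma$ has $r+1$ vertices and $r$ edges, so is a tree once connectedness is verified.

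Substituting $\pi_Y=\Krewr{\pi_X}$ collapses the double sum to
$$
\sum_{\pi_X\in\NC(r)} K_{\pi_X}(X_1,\dots,X_r)\,K_{\Krewr{\pi_X}}(Y_1,\dots,Y_r),
$$
which is the desired identity. The main obstacle is the final sufficiency in the combinatorial step: the rank inequalities alone force only $|\pi_X|+|\pi_Y|=r+1$, and one still has to verify that $\Gamma$ is actually connected (not merely of the correct Euler characteristic) in the Kreweras case. This ultimately rests on the planar structure underlying noncrossing partitions, namely that each block of $\Krewr{\pi_X}$ sits in a distinct region carved out by the arcs of $\pi_X$, so the edges of $\rho$ precisely suffice to link everything together; a clean formalization proceeds by induction on $|\pi_X|$, peeling off an innermost block together with its matching Kreweras region, which reduces the problem to a smaller instance and preserves the tree structure of $\Gamma$ at each step.
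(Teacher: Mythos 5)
The paper itself gives no proof of this statement --- it is quoted directly from Nica--Speicher \cite[Theorem~14.4]{NicaSpeicher:2006} --- and your argument is essentially the proof given there: the product formula of Theorem~\ref{thm:krawczyk} applied with $\rho=\{\{1,2\},\dots,\{2r-1,2r\}\}$, vanishing of mixed cumulants to reduce to $\pi=\pi_X\sqcup\pi_Y$ with $\pi_Y\leq\Krewr{\pi_X}$, and the component/block count showing that $\pi\vee\rho=\hat{1}_{2r}$ forces $\pi_Y=\Krewr{\pi_X}$. Your reasoning is correct, including the necessity direction via the edge-counting argument together with \eqref{eq:cardKrew}; the sufficiency step you flag, namely that $(\pi_X\sqcup\Krewr{\pi_X})\vee\rho=\hat{1}_{2r}$, is a standard property of the Kreweras complement, and the induction you sketch does close it (peel off an interval block of $\pi_X$; the barred elements it traps are singletons of $\Krewr{\pi_X}$ attached to that block by edges of $\rho$, and the block is attached to the rest of the graph through the edge at its last element, so connectedness passes to the smaller instance).
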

This motivates the following definition.
\begin{defi}[{\cite[Ch.~17]{NicaSpeicher:2006}}]
  \label{def:boxedconvolution}
  Let $(a_n)_{n\geq0}$ a sequence and  $\pi\in\NC(n)$ a noncrossing partition.
  As in \eqref{eq:DefinicjaProduktuKumulant} we denote by
  $a_\pi$ the product 
  $$
  a_\pi=\prod_{B\in\pi} a_{\abs{B}}
  .
  $$ 
  Given two sequences  $(a_n)_{n\geq0}$ and $(b_n)_{n\geq0}$ with
  respective generating formal power series
  $f(z)=\sum_{n=1}^\infty a_nz^n$ and  $g(z)=\sum_{n=1}^\infty b_nz^n$ 
  we define their \emph{boxed convolution} as the formal series
  $f\boxstar g(z)=\sum_{n=1}^\infty c_nz^n$ with coefficients
  $$
  c_n = \sum_{\pi\in\NC(n)} a_\pi b_{\Krewr{\pi}}
      = \sum_{\pi\in\NC(n)} a_{\Krewl\pi} b_{\pi}
  .
  $$
\end{defi}
As examples, consider the univariate case
of Theorem~\ref{thm:multfreeconvolution}, which can be rewritten
as $R_{XY}(z) = R_X\boxstar R_Y(z)$,
or \cite[Proposition~11.25]{NicaSpeicher:2006}, which states
that the $R$-transform of an even element $X$  can be written as
$$
R_{X^2}(z) = \alpha\boxstar \zeta(z)
$$  
where
$\alpha(z) = \sum_{n=1}^\infty K_{2n}(X)z^n$ is the determining series of $X$
and $\zeta(z)=\sum_{n=1}^\infty z^n$ is the so-called \emph{Zeta-series}.
Combinatorially this means
\begin{equation}
  \label{eq:KX2}
  K_r(X^2) = \sum_{\pi\in\NC(r)}\alpha_\pi = \sum_{\pi\in\NC(r)}\prod_{B\in\pi} K_{2\abs{B}}(X).
\end{equation}

\noindent

The next result follows from  \cite[Proposition 2.2]{HiwatashiKurodaNagisaYoshida:1999}; see Corollary~\ref{cor:KrQn} below for a generalization.
\begin{prop}
\label{twr:Lehner:2003}
Let $\X_1,\X_2,\dots,\X_n$ be  free identically distributed Wigner semicircular random variables with mean zero and variance $\sigma^2$. Then the cumulants of $\Q_n$ are given as follows:
 \begin{align} 
 K_r(\Q_n) &=(n-1)\,\sigma^{2r}.
\end{align}
\end{prop}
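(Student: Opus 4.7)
The plan is to exploit the orthogonal invariance of free semicircular families. Recall that a family $(\X_1,\dots,\X_n)$ of free standard semicircular variables with common variance $\sigma^2$ is characterized by the fact that all joint free cumulants vanish except for $K_2(\X_i,\X_j) = \sigma^2\,\delta_{ij}$. By multilinearity, if $O=(O_{ij})$ is any real orthogonal matrix, then the variables $Y_i := \sum_j O_{ij}\X_j$ satisfy $K_2(Y_i,Y_k) = \sigma^2 (OO^T)_{ik} = \sigma^2 \delta_{ik}$ and have all higher cumulants still zero, so $(Y_1,\dots,Y_n)$ is again a free semicircular family with variance $\sigma^2$. This is the key structural input I would invoke.

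Next, I would choose $O$ to have first row $\frac{1}{\sqrt{n}}(1,1,\dots,1)$ (e.g.\ the Helmert matrix). Then $Y_1 = \sqrt{n}\,\overline{\X}$, and orthogonality preserves the sum of squares, so $\sum_i \X_i^2 = \sum_i Y_i^2$. Combined with the elementary identity $\Q_n = \sum_i \X_i^2 - n\,\overline{\X}^2 = \sum_i \X_i^2 - Y_1^2$ this yields
$$
\Q_n = \sum_{i=2}^n Y_i^2.
$$
Since $Y_2,\dots,Y_n$ are free, the squares $Y_i^2$ lie in mutually free subalgebras, hence free cumulants are additive:
$$
K_r(\Q_n) = \sum_{i=2}^n K_r(Y_i^2).
$$

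Finally, for $Y$ semicircular with variance $\sigma^2$ the random variable $Y^2$ is free Poisson $\nu(1,\sigma^2)$, whose cumulants are $K_r(Y^2) = \sigma^{2r}$; equivalently, formula \eqref{eq:KX2} specializes to $K_r(Y^2) = \sum_{\pi\in\NC(r)}\prod_{B\in\pi}K_{2\abs{B}}(Y)$, and only the singleton partition $\hat{0}_r$ contributes since $K_{2j}(Y)=0$ for $j\geq 2$, giving $K_r(Y^2) = (K_2(Y))^r = \sigma^{2r}$. Summing over $i=2,\dots,n$ yields $K_r(\Q_n) = (n-1)\sigma^{2r}$.

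The only delicate point is the orthogonal invariance step, which should either be cited as a standard fact about free semicircular families or proved on the spot via the two-line cumulant computation above. An alternative, purely combinatorial route would be to write $\Q_n = \sum_{i,j}(\delta_{ij}-\tfrac{1}{n})\X_i\X_j$ and apply Theorem~\ref{twr:produktargumentow} (Krawczyk--Speicher), noting that for a semicircular family only noncrossing pair partitions $\pi$ of $[2r]$ with $\pi\vee\rho=\hat{1}_{2r}$ survive and that $(\delta_{ij}-\tfrac{1}{n})$ is the projection onto $\mathbf{1}^\perp$; this is essentially the strategy behind the general Corollary~\ref{cor:KrQn} that the proposition is presented as a special case of, but it is considerably more work than the orthogonal-transformation trick.
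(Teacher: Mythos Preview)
Your proof is correct. The paper does not give its own proof of this proposition; it simply cites \cite[Proposition~2.2]{HiwatashiKurodaNagisaYoshida:1999} and points forward to Corollary~\ref{cor:KrQn} as a generalization. Your Helmert-rotation argument is the direct free analogue of the classical Fisher--Cochran derivation and is essentially what the cited reference does. The alternative combinatorial route you sketch at the end---via the Krawczyk--Speicher product formula applied to the projection matrix $A=I-\tfrac{1}{n}\One$---is exactly what the paper carries out in Corollary~\ref{cor:KrQn} for arbitrary even $X$; for the semicircular case your orthogonal-transformation trick is considerably shorter and more transparent.
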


The following lemma connects Theorem~\ref{thm:krawczyk}
with Definition~\ref{def:boxedconvolution} and is the key to the main result.
Its proof is contained in the proof of Proposition~11.25 in
 the book \cite{NicaSpeicher:2006}.
\begin{lemm} 
  \label{lemm:lematoparzystych}
  Let $r\in\mathbb{N}$ and $\pi \in \NCeven(2r)$,
  then $\pi \vee \onetwo{r}=\hat{1}_{2r}$ if and only if $\pi\geq \pispecial{r}$,
  i.e., $1$ and $2r$ lie in the same block of $\pi$ and elements $2i$ and $2i+1$
  also lie in the same block of $\pi$ for $i\in[r-1]$.  
  Consequently
  $$
  \{\pi : \pi \vee \onetwo{r}=\hat{1}_{2r}\}\cap \NCeven(2r)
  = [\pispecial{r}, \hat{1}_{2r}  ] ,
  $$
  is a lattice isomorphic to $\NC(r)$.
\end{lemm}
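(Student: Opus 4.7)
The plan is to establish separately the set equality $\{\pi\in\NCeven(2r):\pi\vee\onetwo{r}=\hat{1}_{2r}\} = [\pispecial{r},\hat{1}_{2r}]$ and then the lattice isomorphism $[\pispecial{r},\hat{1}_{2r}]\cong\NC(r)$. The isomorphism part I will dispose of by invoking the standard fact that for any $\rho\in\NC(n)$ the upper interval $[\rho,\hat{1}_n]$ is isomorphic to $\NC(\abs\rho)$; since $\pispecial{r}$ has $r$ blocks, this gives $\NC(r)$ and simultaneously shows that every element of this interval is a union of the size-two blocks of $\pispecial{r}$, and hence automatically lies in $\NCeven(2r)$.

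For the set equality, the inclusion $\supseteq$ is straightforward. Since $\pispecial{r}\leq\pi$ implies $\pispecial{r}\vee\onetwo{r}\leq\pi\vee\onetwo{r}$, it suffices to verify $\pispecial{r}\vee\onetwo{r}=\hat{1}_{2r}$: combining the $\onetwo{r}$-pairs $\{2i-1,2i\}$ alternately with the $\pispecial{r}$-pairs $\{2i,2i+1\}$ (for $i\in[r-1]$) and $\{1,2r\}$ chains $1,2,\dots,2r$ into a single equivalence class.

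The harder inclusion $\subseteq$ is where the real work lies. Given $\pi\in\NCeven(2r)$ with $\pi\vee\onetwo{r}=\hat{1}_{2r}$, I must verify the relations $1\sim_\pi 2r$ and $2i\sim_\pi 2i+1$ for all $i\in[r-1]$, which together force $\pi\geq\pispecial{r}$. In both cases the strategy is a disconnection argument: assuming the relation fails, I will exhibit a nonempty proper subset $A\subseteq\{1,\dots,2r\}$ that is stable under both $\pi$ and $\onetwo{r}$, contradicting $\pi\vee\onetwo{r}=\hat{1}_{2r}$. For the relation $1\sim_\pi 2r$, let $j=\max B_1$ where $B_1$ is the block of $\pi$ containing $1$; assuming $j<2r$, noncrossingness implies that every block of $\pi$ meeting $A=\{j+1,\dots,2r\}$ lies entirely within $A$, so $\abs{A}=2r-j$ is a sum of even block cardinalities and hence $j$ is even. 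But then the only $\onetwo{r}$-pair that could straddle the boundary of $A$ would be $\{j,j+1\}$, which is absent from $\onetwo{r}$ precisely because $j$ is even. So $A$ is stable, contradiction.

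The analogous argument for $2i\sim_\pi 2i+1$ requires more care, splitting on whether the block $B\ni 2i$ has an element exceeding $2i$. If yes, the arc $A=\{2i+1,\dots,l-1\}$ between $2i$ and the next element $l$ of $B$ works: the parity of $\abs{A}$ forces $l$ odd, so neither $\{2i,2i+1\}$ nor $\{l-1,l\}$ belongs to $\onetwo{r}$, and noncrossingness isolates $A$ from the rest. If no, then $\max B=2i$ and one must consider the union of the two exterior arcs $\{1,\dots,\min B-1\}$ and $\{2i+1,\dots,2r\}$, which may be linked by a $\pi$-block nested around $B$ but remain collectively separated from $B$ together with its interior. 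This nested-block case is the subtle point and the main obstacle of the proof: the parities (in particular that $\min B$ must be odd, forced by evenness of the length of the interior of $B$) together with the absence of straddling $\onetwo{r}$-pairs across either boundary must conspire to isolate the outer arcs, and it is here that the even-cardinality hypothesis on $\pi$ is used in an essential way.
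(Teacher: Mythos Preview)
The paper does not prove this lemma itself; it simply refers the reader to the proof of Proposition~11.25 in \cite{NicaSpeicher:2006}. Your disconnection argument for the equivalence $\pi\vee\onetwo{r}=\hat{1}_{2r}\Longleftrightarrow\pi\geq\pispecial{r}$ is correct and more explicit than anything the paper provides. The parity/isolation reasoning works in both cases; in particular the ``exterior arcs'' case you flag as subtle fills in exactly as you outline: the interval $\{\min B,\dots,2i\}$ is a union of $\pi$-blocks (by noncrossingness), hence of even length, forcing $\min B$ odd, and then neither boundary of the exterior set is straddled by a pair of $\onetwo{r}$.

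There is, however, a genuine error in your treatment of the lattice isomorphism. The ``standard fact'' you invoke is false: it is \emph{not} true that $[\rho,\hat{1}_n]\cong\NC(\abs{\rho})$ for arbitrary $\rho\in\NC(n)$. Take $n=4$ and $\rho=\{\{1\},\{2,4\},\{3\}\}$, so $\abs{\rho}=3$; the interval $[\rho,\hat{1}_4]$ contains only four elements (the coarsening $\{\{1,3\},\{2,4\}\}$ is crossing and must be discarded), while $\abs{\NC(3)}=5$. The correct general statement, via Kreweras duality, is $[\rho,\hat{1}_n]\cong\prod_{B\in\Krewr{\rho}}\NC(\abs{B})$. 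For the specific partition $\pispecial{r}$ the conclusion you want does hold, and the cleanest fix is to note that $\pispecial{r}$ is the image of $\onetwo{r}$ under the cyclic rotation $i\mapsto i+1\pmod{2r}$, which is a lattice automorphism of $\NC(2r)$; hence $[\pispecial{r},\hat{1}_{2r}]\cong[\onetwo{r},\hat{1}_{2r}]$, and the latter is visibly isomorphic to $\NC(r)$ because the blocks of $\onetwo{r}$ are consecutive intervals, so collapsing each to a point preserves noncrossingness. (Your observation that every $\pi\geq\pispecial{r}$ lies in $\NCeven(2r)$ is unaffected: it follows directly from the fact that blocks of such $\pi$ are unions of the size-two blocks of $\pispecial{r}$, and does not rely on the faulty isomorphism claim.)
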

\begin{cor} \label{corr:dwupartycje}
There is
only one non-crossing pair partition $\pi$ such that  $\pi \vee
\onetwo{r}=\hat{1}_{2r} $, 
namely $\pispecial{r}=
\begin{picture}(56,6.5)(1,0)
  \put(2,0){\line(0,1){7.5}}
  \put(8,0){\line(0,1){4.5}}
  \put(14,0){\line(0,1){4.5}}
  \put(20,0){\line(0,1){4.5}}
  \put(26,0){\line(0,1){4.5}}
  \put(44,0){\line(0,1){4.5}}
  \put(50,0){\line(0,1){4.5}}
  \put(56,0){\line(0,1){7.5}}
\put(28.0,0){$\cdots$}
  \put(8,4.5){\line(1,0){6}}
  \put(20,4.5){\line(1,0){6}}
  \put(44,4.5){\line(1,0){6}}
  \put(2,7.5){\line(1,0){54}}
\end{picture}=\{(1,2r),(2,3),\dots,(2r-2,2r-1)\}$. 
\end{cor}
\begin{defi}
Let $B_1,\dots,B_m $ be subsets of $\N$
and random variables $\X_1,\dots,\X_n\in\A,$ be given.
Then for an interval partition $\rho=\{(1,\dots,i_1),(i_1+1,\dots,i_2),\dots,
(i_{r-1}+1,\dots,i_r)\}$ 
we define the partial cumulant functional
\begin{align} 
K^\rho_{B_1,\dots,B_m}(\X_1,\X_2,\dots,\X_n)=\sum_{\substack{\pi\in \NC_{B_1,\dots,B_m} (n)\\ \pi\vee\rho=\hat{1}_{n}} }K_\pi({\X_1,\dots,\X_n}),
\end{align} 
Usually we will abuse notation and abbreviate this expression as
\begin{align} 
K^r_{B_1,\dots,B_m}(\X_1\dots \X_{i_1},\dots,\X_{i_{r-1}+1}\dots\X_{i_r})=\sum_{\substack{\pi\in \NC_{B_1,\dots,B_m} (n)\\ \pi\vee \rho=\hat{1}_{n}} }K_\pi({\X_1,\dots,\X_n}).
\end{align} 

\end{defi}

\begin{lemm}
  \label{lem:centeredstatistic}
   Let $P=P(X_1,X_2,\dots,X_n)$ be a polynomial
    of degree at most two in  noncommuting variables
    $X_1,X_2,\dots,X_n$. Then $\tau(P(\X_1,\X_2,\dots,\X_n))=0$ 
    for every i.i.d.\ free family $X_i$ 
    if and only if 
    $$
    \sum_{\sigma\in\SG_n} P(X_{\sigma(1)},X_{\sigma(2)},\dots,X_{\sigma(n)})=0
    .
    $$
\end{lemm}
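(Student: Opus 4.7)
The plan is to reduce this to a straightforward coefficient comparison. Write the polynomial as
$$
P = c_0 + \sum_{i=1}^n c_i X_i + \sum_{i,j=1}^n c_{ij}\, X_i X_j,
$$
with complex coefficients $c_0, c_i, c_{ij}$. The easy direction (symmetrized polynomial vanishing implies $\tau(P)=0$) follows because $\tau(P(\X_{\sigma(1)},\dots,\X_{\sigma(n)}))=\tau(P(\X_1,\dots,\X_n))$ for every $\sigma$ by the i.i.d.\ hypothesis, so $0=\tau(\sum_\sigma P)=n!\,\tau(P(\X_1,\dots,\X_n))$.

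For the forward direction, I will evaluate $\tau(P(\X_1,\dots,\X_n))$ on an i.i.d.\ free family with prescribed one- and two-point moments. Writing $m=\tau(\X)$ and $s=\tau(\X^2)$, freeness yields $\tau(\X_i\X_j)=m^2$ whenever $i\neq j$, hence
$$
\tau(P(\X_1,\dots,\X_n))
= c_0 + m\,A + s\,B + m^2\,C,
$$
where $A=\sum_i c_i$, $B=\sum_i c_{ii}$, and $C=\sum_{i\neq j} c_{ij}$. By the first remark in the preceding section we may approximate any pair $(m,s)$ with $s\geq m^2$ by bounded distributions (for instance, shifted and scaled semicircular or Bernoulli variables), so the polynomial identity $c_0+mA+sB+m^2 C = 0$ must hold on a set with nonempty interior in $\mathbb R^2$. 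Comparing coefficients of $1$, $m$, $s$, $m^2$ gives the four linear constraints
$$
c_0 = 0,\qquad A=0,\qquad B=0,\qquad C=0.
$$

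Finally, I will compute the symmetrized polynomial $\sum_{\sigma\in\SG_n} P(X_{\sigma(1)},\dots,X_{\sigma(n)})$ directly. Grouping monomials according to their degree yields
$$
\sum_{\sigma\in\SG_n} P = n!\,c_0 + (n-1)!\,A\,\Bigl(\sum_{k} X_k\Bigr) + (n-1)!\,B\,\Bigl(\sum_{k} X_k^2\Bigr) + (n-2)!\,C\,\Bigl(\sum_{k\neq l} X_k X_l\Bigr),
$$
which vanishes once $c_0=A=B=C=0$. There is no real obstacle here; the only point requiring care is justifying that one may independently vary $m$ and $s$ subject to $s\geq m^2$ through admissible distributions, which is ensured by the existence of bounded i.i.d.\ families with arbitrary first two moments (compact support is preserved under scaling and shifting).
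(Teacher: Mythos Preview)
Your proof is correct and follows essentially the same approach as the paper: evaluate $\tau(P)$ on i.i.d.\ families with prescribed low moments, extract linear constraints on the coefficients, and verify directly that these constraints force the symmetrized polynomial to vanish. The only cosmetic difference is that the paper first uses a scaling argument ($X_i\mapsto tX_i$) to reduce to the homogeneous case and then treats degrees $1$ and $2$ separately, whereas you handle all degrees simultaneously by letting $(m,s)$ range over a region with nonempty interior; both routes arrive at the same four constraints $c_0=A=B=C=0$ and the same symmetrization computation.
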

\begin{proof}
  Clearly by symmetry the second condition is stronger than the first condition.

  In order to show that it is also necessary, we first note that
  by a simple scaling argument we may assume without loss of generality
  that the polynomial in consideration is homogeneous.
  Clearly such a polynomial cannot have a constant term and
  we start with a linear polynomial $P=\sum_{i=1}^n \alpha_i X_i$.
  By evaluating a distribution with nonzero first moment it follows
  that $\sum_{i=1}^n \alpha_i=0$. But then we have
  $$
  \sum_{\sigma\in\SG_n}P(X_{\sigma(1)},X_{\sigma(2)},\dots,X_{\sigma(n)})
  =\sum_{i=1}^n\alpha_i(n-1)!\sum_{k=1}^n X_k=0
  .
  $$
  Let us now turn to a homogeneous polynomial of second order
  $$
  P = \sum_{i,j=1}^n \alpha_{ij}X_iX_j
  .
  $$
  Evaluating at a distribution with first moment $\mu_1$ and second moment $\mu_2$
  we obtain
  $$
  \sum_{i\ne j}\alpha_{ij}\mu_1^2 +   \sum_{i}\alpha_{ii}\mu_2 = 0
  $$
  and it follows that
  $$
  \sum_{i\ne j}\alpha_{ij}
  =
  \sum_i \alpha_{ii}
  =0
  .
  $$
  Now consider the symmetrization
  \begin{align*}
  \sum_{\sigma\in\SG_n}P(X_{\sigma(1)},X_{\sigma(2)},\dots,X_{\sigma(n)})
  &= \sum_{i,j}\sum_\sigma X_{\sigma(i)}X_{\sigma(j)} \\
  &= \sum_{i\ne j}\alpha_{ij}(n-2)!\sum_{k\ne l}  X_kX_l
     + \sum_i \alpha_{ii} (n-1)!\sum_k X_k^2
     \\
  &=0
  .
  \end{align*}
\end{proof}
\begin{Rem}
  \begin{enumerate}
   \item 
    Our typical example of a centered linear statistic will be 
    $\X_{i}-\overline{\X}$.
   \item 
  The example $P=X_1X_2X_1-X_1^2X_2$ shows that in the present formulation
  the lemma cannot be extended beyond degree $2$.
  \end{enumerate}
\end{Rem}

In the following a polynomial $P(X_1,X_2,\dots,X_n)$ in noncommuting variables
is called \emph{symmetric} if it is invariant under permutations, i.e.,
 $P(X_{\sigma(1)},X_{\sigma(2)},\dots,X_{\sigma(n)})=P(X_1,X_2,\dots,X_n)$ for any
permutation $\sigma\in\SG_n$. For a linear form $L=\sum_{i=1}^n \alpha_i X_i$ 
we denote the permuted form by $L_\sigma=\sum_{i=1}^n \alpha_iX_{\sigma(i)}$.

\begin{lemm}
\label{lemm:sumyobcietelematgeneral}
 Let $\X_1, \X_2,\dots, \X_n \in \A_{sa}$ be free identically distributed random variables, $L=\sum_{i=1}^n \alpha_i \X_i$ a linear form such that $\tau(L)=0$
and $P_j=P_j(X_1,X_2,\dots,X_n)$ symmetric polynomials for $j\in\{1,2,\dots,r\}\setminus\{k\}$.
Then
$$K^r_{B_1,\dots,B_m}(P_1,\dots,P_{k-1},L,P_{k+1},\dots,P_r)=0.$$
\end{lemm}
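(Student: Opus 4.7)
The plan is to exploit the $\SG_n$-symmetry of the iid family: since $L$ is the only asymmetric input, averaging over permutations will symmetrize $L$, and the hypothesis $\tau(L)=0$ will then force the result to vanish.

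The first step is to establish the following invariance. For every $\sigma\in\SG_n$, the substitution $\X_i\mapsto\X_{\sigma(i)}$ leaves the partial cumulant functional unchanged, because (a) mixed free cumulants $K_\pi(\X_{j_1},\dots,\X_{j_N})$ depend only on $\ker(j_1,\dots,j_N)$ by iid-ness, and this kernel is preserved by any relabelling, and (b) the combinatorial constraints $\pi\in\NC_{B_1,\dots,B_m}(n)$ and $\pi\vee\rho=\hat{1}_n$ defining $K^r_{B_1,\dots,B_m}$ refer only to positions in the flattened word, not to the letters occupying them. Writing $\X_\sigma=(\X_{\sigma(1)},\dots,\X_{\sigma(n)})$, this gives
$$
K^r_{B_1,\dots,B_m}(P_1(\X_\sigma),\dots,L(\X_\sigma),\dots,P_r(\X_\sigma))
=K^r_{B_1,\dots,B_m}(P_1(\X),\dots,L(\X),\dots,P_r(\X)).
$$

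Since each $P_j$ is symmetric, $P_j(\X_\sigma)=P_j(\X)$ for $j\ne k$, while $L(\X_\sigma)=L_\sigma$. Averaging the previous identity over $\sigma\in\SG_n$ and using multilinearity of the partial cumulant in each argument yields
$$
K^r_{B_1,\dots,B_m}(P_1,\dots,L,\dots,P_r)
=K^r_{B_1,\dots,B_m}\Bigl(P_1,\dots,\tfrac{1}{n!}\sum_{\sigma\in\SG_n}L_\sigma,\dots,P_r\Bigr).
$$
A direct count gives $\sum_{\sigma\in\SG_n}L_\sigma=(n-1)!\bigl(\sum_{i=1}^n\alpha_i\bigr)\sum_{k=1}^n\X_k$, and the hypothesis $\tau(L)=0$, read in the sense of Lemma~\ref{lem:centeredstatistic} (which for a linear form is equivalent to $\sum_i\alpha_i=0$), makes the symmetrized entry the zero element. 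Hence the partial cumulant vanishes.

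The main obstacle will be carefully verifying the invariance in the first step: one has to unpack the definition of $K^r_{B_1,\dots,B_m}$ and observe that the forbidden blocks $B_1,\dots,B_m$ and the interval partition $\rho$ are attached to positions in the flattened product, not to the iid variables themselves, so that the $\SG_n$-action on the underlying family leaves the combinatorial data untouched while trivially permuting the cumulants that appear in the sum. Once this position-versus-index distinction is clearly in place, the remaining steps reduce to the routine symmetrization and multilinearity calculation described above.
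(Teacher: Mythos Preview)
Your proof is correct and follows essentially the same route as the paper: establish that the partial cumulant is invariant under the $\SG_n$-action on the underlying iid family, use the symmetry of the $P_j$ to isolate the effect on $L$, average over $\sigma$, and invoke Lemma~\ref{lem:centeredstatistic} to kill the symmetrized linear slot. Your write-up is in fact a bit more explicit than the paper's about why the combinatorial constraints defining $K^r_{B_1,\dots,B_m}$ are attached to positions rather than labels, which is the right point to emphasize.
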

\begin{proof}
 Let us first observe that for $i\neq j$ we have 
$$K^r_{B_1,\dots,B_m}(P_1,\dots,P_{k-1},\X_{i},P_{k+1},\dots,P_r)=K^r_{B_1,\dots,B_m}(P_1,\dots,P_{k-1},\X_{j},P_{k+1},\dots,P_r),$$ 
for all  $i,j\in[n],$ and $r\geq 1$. This follows from the argument that $\X_i$ are free i.i.d. and $P_j$ are symmetric polynomials in the $n$ variables $\X_1,\dots,\X_n.$
It follows by multilinearity that
$$K^r_{B_1,\dots,B_m}(P_1,\dots,P_{k-1},L,P_{k+1},\dots,P_r)=K^r_{B_1,\dots,B_m}(P_1,\dots,P_{k-1},L_\sigma,P_{k+1},\dots,P_r),$$ for every permutation $\sigma\in\SG_n$
and taking the average, we have
$$
K^r_{B_1,\dots,B_m}(P_1,\dots,P_{k-1},L,P_{k+1},\dots,P_r)=
\frac{1}{n!}\sum_{\sigma\in\SG_n}
K^r_{B_1,\dots,B_m}(P_1,\dots,P_{k-1},L_\sigma,P_{k+1},\dots,P_r)
=0,
$$ 
again by multilinearity and taking into account Lemma~\ref{lem:centeredstatistic}.

\end{proof}

\begin{cor}
  \label{lemm:sumyobcietelemat}
   Let $\X_1, \X_2,\dots, \X_n \in \A_{sa}$ be free identically distributed random variables then 
$$K^r_{B_1,\dots,B_m}(\Q_n,\dots,\Q_n,\X_{i}-\overline{\X},\Q_n,\dots,\Q_n)=0.$$
\end{cor}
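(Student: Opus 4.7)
The plan is to derive this corollary as a direct specialization of Lemma~\ref{lemm:sumyobcietelematgeneral}. The two hypotheses of that lemma must be checked for the present situation: that $\X_i-\overline{\X}$ qualifies as the required centered linear form $L$, and that $\Q_n$ qualifies as a symmetric polynomial $P_j$ in the variables $\X_1,\X_2,\dots,\X_n$.

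First I would verify that $L=\X_i-\overline{\X}$ is a linear form with vanishing expectation. Writing $L=(1-\tfrac{1}{n})\X_i-\tfrac{1}{n}\sum_{j\ne i}\X_j$, the coefficients sum to zero, so $\tau(L)=(1-\tfrac{1}{n})\tau(\X_i)-\tfrac{n-1}{n}\tau(\X_1)=0$ since the $\X_j$ are identically distributed.

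Next I would note that $\Q_n=\sum_{k=1}^n(\X_k-\overline{\X})^2$ is manifestly invariant under any permutation $\sigma\in\SG_n$ of $\X_1,\dots,\X_n$, because $\overline{\X}$ is symmetric and the outer sum ranges over all $k$. Hence each of the $r-1$ copies of $\Q_n$ in the slots $j\ne k$ is a symmetric polynomial in the sense required by Lemma~\ref{lemm:sumyobcietelematgeneral}.

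Having verified both hypotheses, I would simply invoke Lemma~\ref{lemm:sumyobcietelematgeneral} with $P_j=\Q_n$ for $j\ne k$ and $L=\X_i-\overline{\X}$ in position $k$, and conclude that
$$
K^r_{B_1,\dots,B_m}(\Q_n,\dots,\Q_n,\X_i-\overline{\X},\Q_n,\dots,\Q_n)=0.
$$
There is no real obstacle here; the only point that merits a sentence of justification is the symmetry of $\Q_n$, since the position of the centered linear argument $\X_i-\overline{\X}$ breaks the global symmetry of the tuple but is irrelevant for applying the lemma, which only requires that the \emph{other} arguments be symmetric.
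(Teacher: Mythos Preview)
Your proposal is correct and matches the paper's approach exactly: the corollary is stated without proof immediately after Lemma~\ref{lemm:sumyobcietelematgeneral}, as the obvious specialization with $L=\X_i-\overline{\X}$ and $P_j=\Q_n$. The verifications you spell out (that $\tau(L)=0$ and that $\Q_n$ is symmetric) are precisely what the paper leaves implicit.
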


\section{Proof of the main theorem} 
Continuing Lemma~\ref{lem:centeredstatistic} we 
establish a curious cancellation result for symmetrized squares of centered
linear statistics. A similar phenomenon was observed by
Nica and Speicher  \cite[Theorem 1.2]{NicaSpeicher:1998} in the case
of the free commutator. We postpone the investigation of a possible common
pattern between these phenomena to future work.
\begin{lemm}
  \label{lem:oddcancellation}
  Let $\X_1, \X_2,\dots, \X_n$ be free identically distributed copies
  of a random variable $X$ 
  and
  $L=\sum_{i=1}^n\alpha_i X_i$ a linear form with $\tau(L)=0$.
  Then the distribution of the quadratic statistic
  $$
  P=\sum_{\sigma\in \SG_n} L_\sigma^2,
  $$
  does not depend on the odd cumulants of $\X$.
\end{lemm}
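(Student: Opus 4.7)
The plan is to compute the free cumulants $K_r(P)$ through the Krawczyk--Speicher formula and show directly that every noncrossing partition containing a block of odd size contributes zero after symmetrisation over $\sigma\in\SG_n$.

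Note first that requiring $\tau(L)=0$ to hold uniformly in the underlying distribution of $X$ forces $\sum_{i=1}^n\alpha_i=0$; in particular every $L_\sigma$ is then a centered linear form in the i.i.d.\ free family $\{X_k\}$. By multilinearity of $K_r$ and Theorem~\ref{thm:krawczyk} applied to each factor $L_{\sigma_m}^2=L_{\sigma_m}\cdot L_{\sigma_m}$, one writes
\begin{equation*}
K_r(P,\dots,P) = \sum_{\sigma_1,\dots,\sigma_r\in\SG_n}\;\sum_{\substack{\pi\in\NC(2r)\\ \pi\vee\onetwo{r}=\hat{1}_{2r}}} K_\pi\bigl(L_{\sigma_1},L_{\sigma_1},\dots,L_{\sigma_r},L_{\sigma_r}\bigr).
\end{equation*}
Each $K_\pi$ factorises over blocks, and freeness of the $X_k$ lets every block factor be rewritten as $K_{|B|}(X)\cdot s_B(\sigma_1,\dots,\sigma_r)$ for an explicit polynomial $s_B$ in the $\alpha_i$'s. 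Exchanging the two sums reduces the lemma to showing that $\sum_\sigma\prod_B s_B(\sigma)=0$ whenever $\pi$ contains a block of odd size; only the even partitions of Lemma~\ref{lemm:lematoparzystych} then survive, and $K_r(P)$ becomes a polynomial in the even cumulants $K_{2|B|}(X)$ alone.

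A singleton block $\{j\}\in\pi$ is dispatched immediately since $K_1\bigl(L_{\sigma_{\lceil j/2\rceil}}\bigr)=K_1(X)\sum_i\alpha_i=0$. For an odd block of size $\geq 3$ the strategy is to integrate out the permutations $\sigma_m$ one at a time, using the elementary identity
\begin{equation*}
\sum_{\sigma\in\SG_n}\alpha_{\sigma^{-1}(k)}\,\alpha_{\sigma^{-1}(l)} = (n-1)!\,\|\alpha\|^2\,\delta_{kl} - (n-2)!\,\|\alpha\|^2\,(1-\delta_{kl}),
\end{equation*}
to collapse the $\sigma$-sum into a \emph{state sum} on a block graph $G_\pi$: its vertices are the blocks of $\pi$, and it carries one edge for each index $m\in[r]$ for which the pair $\{2m-1,2m\}$ is split across two distinct blocks of $\pi$. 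Every edge then contributes the matrix entry $\delta_{k_B,k_{B'}}-1/n$ of the orthogonal projection $I-J/n$ onto $\mathbf{1}^{\perp}\subseteq\IC^n$.

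The hard step is to show that this state sum vanishes whenever $\pi$ carries an odd block. An odd block produces a vertex of odd degree in $G_\pi$, and parity together with the planarity constraints imposed by the noncrossing condition restricts the admissible multi-edge patterns in $G_\pi$ enough to force a bridge; since $(I-J/n)\mathbf{1}=0$, a character-sum expansion kills the state sum on any bridged graph. A possibly cleaner parallel route is to verify by direct expansion (using $\sum_i\alpha_i=0$) that $P=\tfrac{n!\,\|\alpha\|^2}{n-1}\,\Q_n$, thereby reducing the lemma to the same statement for the sample variance $\Q_n$, and then to invoke Corollary~\ref{lemm:sumyobcietelemat}---which already encodes the vanishing of partial cumulants in which the centered statistic $X_i-\overline X$ appears isolated---in a telescoping argument on the Krawczyk expansion of $K_r(\Q_n,\dots,\Q_n)$ to strip away every odd-block contribution.
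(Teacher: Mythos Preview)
Your second route is essentially the paper's own argument. The identity $P=\tfrac{n!\,\|\alpha\|^2}{n-1}\,Q_n$ (which follows exactly as you indicate from $\sum_i\alpha_i=0$) is a pleasant shortcut the paper does not record, but the paper proceeds in the same way without it: expand $K_r(P)$ by Krawczyk--Speicher, enumerate the odd subsets $B_1,\dots,B_m$ of $[2r]$, write $\NC(2r)\setminus\NCeven(2r)$ as the disjoint telescope $\bigcup_k \NC^{B_k}_{B_1,\dots,B_{k-1}}(2r)$, and kill each piece by Lemma~\ref{lemm:sumyobcietelematgeneral} (your Corollary~\ref{lemm:sumyobcietelemat} is the $Q_n$-specialisation of that lemma). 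So on this branch your proposal and the paper coincide.

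Your first route---the state sum on the block graph $G_\pi$---is a genuinely different packaging, and the bridge argument you give (the restricted state sum on either side of a bridge is $\SG_n$-invariant, hence constant, hence annihilated by $(I-J/n)\mathbf{1}=0$) is correct and attractive. The gap is the step you yourself flag as ``hard'': you assert but do not prove that for $\pi\in\NC(2r)$ with $\pi\vee\onetwo{r}=\hat1_{2r}$, the presence of an odd block forces a bridge in $G_\pi$. General graph-theoretic parity (``odd degree somewhere'') does not imply a bridge, so one really must use the noncrossing hypothesis. The argument that does the job is precisely the paper's: an odd block $B$ leaves at least one \emph{odd} complementary interval $I$; after a cyclic rotation one may take $I=\{1,\dots,j_1-1\}$ with $j_1$ even, so that the pairs of $\onetwo{r}$ lying entirely inside $I$ are $\{1,2\},\dots,\{j_1-3,j_1-2\}$ and the \emph{unique} pair with one foot in $I$ is $\{j_1-1,j_1\}$. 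Hence in $G_\pi$ the blocks contained in $I$ are joined to the rest of the graph by a single edge, which is therefore a bridge. Once this is supplied, your graph formulation becomes a clean alternative to the telescope; but note that its crux is the same odd-interval observation the paper exploits.
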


\begin{proof}
We show by induction that the cumulants of $P$ can be expressed in 
terms of the even cumulants of $X$.
First we apply the product formula of Theorem~\ref{thm:krawczyk} and obtain
  \begin{align*}
    K_r(P) &= \sum_{\sigma_1,\dots,\sigma_r\in\SG_n}
               K_r(L_{\sigma_1}^2,L_{\sigma_2}^2,\dots,L_{\sigma_r}^2)\\
           &= \sum_{\sigma_1,\dots,\sigma_r\in\SG_n}
               \sum_{\substack{\pi\in \NC(2r)\\ \pi\vee \onetwo{r}=\hat{1}_{2r}}}
               K_\pi(L_{\sigma_1},L_{\sigma_1},
                    L_{\sigma_2},L_{\sigma_2},
                    \dots,
                    L_{\sigma_r},L_{\sigma_r})\\
           &= \sum_{\substack{\pi\in \NC(2r)\\ \pi\vee \onetwo{r}=\hat{1}_{2r}}}
              \tilde{K}_\pi(L)
  \end{align*}
  where for $\pi\in\NC(2r)$ we write
  \begin{equation}
    \label{eq:Ktilde}
  \tilde{K}_\pi(L)
  =\sum_{\sigma_1,\sigma_2,\dots,\sigma_r\in\SG_n} 
    K_\pi(L_{\sigma_1},L_{\sigma_1},
          L_{\sigma_2},L_{\sigma_2},
          \dots,
          L_{\sigma_r},L_{\sigma_r}).
  \end{equation}
  We claim that in this decomposition
  the contributions of non-even partitions cancel each other.
  To see this, we proceed by induction and use 
  Lemma~\ref{lemm:sumyobcietelematgeneral}.
  Let $B_1,B_2,\dots,B_m$ be an enumeration of all odd subsets of $[2r]$,
  then we can split off the sum \eqref{eq:Ktilde} the even part
  and decompose the rest in a ``telescope'' fashion
  as 
  \begin{equation}
    \label{eq:telescope}
    K_r(P) = 
    \sum_{\substack{\pi\in \NCeven(2r)\\ \pi\vee\onetwo{r}=\hat{1}_{2r}}}
    \tilde{K}_\pi(L)
    +\sum_{k=1}^m
    \sum_{\substack{\pi\in \NC^{B_k}_{B_1,B_2,\dots,B_{k-1}}(2r)\\ \pi\vee\onetwo{r}=\hat{1}_{2r}}}
    \tilde{K}_\pi(L)
    .
  \end{equation}
   The last formula is obtained using
the following decomposition 
$$\NC(2r)\setminus \NCeven(2r)=NC^{B_1}(2r) \cup NC^{B_2}_{B_1}(2r)\cup NC^{B_3}_{B_1,B_2}(2r) \cup\dots \cup NC^{B_m}_{B_1,\dots,B_{m-1}}(2r).$$ 
Directly from the definition we have $NC^{B_i}_{B_1,\dots,B_{i-1}}(2r)\cap
NC^{B_j}_{B_1,\dots,B_{j-1}}(2r)=\emptyset,$ for $i\neq j$.   We  will show the
inclusion
$$\NC(2r)\setminus \NCeven(2r)\subseteq NC^{B_1}(2r) \cup NC^{B_2}_{B_1}(2r)\cup NC^{B_3}_{B_1,B_2}(2r) \cup\dots \cup NC^{B_m}_{B_1,\dots,B_{m-1}}(2r)$$ 
only because the converse inclusion is obvious.  
Given $\pi\in \NC(2r)\setminus \NCeven(2r)$, let $k$ be the smallest index
such that $B_k\in\pi$ then $\pi\in NC^{B_{k}}_{B_1,\dots,B_{k-1}}(2r)$ for the blocks
$B_1,B_2, \dots,B_{k-1}$ do not appear in $\pi$.

  It remains to show that each individual sum
  \begin{equation}
    \label{eq:sumNCBk}
    \sum_{\substack{\pi\in \NC^{B_k}_{B_1,B_2,\dots,B_{k-1}}(2r)\\ \pi\vee\onetwo{r}=\hat{1}_{2r}}}
    \tilde{K}_\pi(L)
  \end{equation}
  vanishes. 
  Every $\pi$ in this sum contains the odd block $B_k$ and splits
  the complement $[2r]\setminus B_k$ into intervals
  $I_1,I_2,\dots,I_l$, interpreted in a circular manner,
  see Fig.~\ref{fig:oddinterval}. Then
  at least one of these intervals must be odd. 
  To simplify the discussion we may assume that either $I_1$ 
  is odd and $2r\in B_k$ or $I_l$ is odd and $1\in B_k$;
  this may always be achieved by applying an even rotation, which does
  not change the values of the cumulants because of traciality.
  We are now in one of the situations depicted in Fig.~\ref{fig:oddinterval}.
  \begin{figure}
      \begin{picture}(140,30)(20,0)
  \put(8,0){\line(0,1){10.5}}
  \put(44,0){\line(0,1){10.5}}
  \put(128,0){\line(0,1){10.5}}
  \put(8,10.5){\line(1,0){68}}
  \put(81,8.5){\tiny $\cdots$}
  
  \put(96,10.5){\line(1,0){32}}
  \put(6,-8){\tiny $j_1$}
  \put(41,-8){\tiny $j_2$}
  \put(126,-8){\tiny $j_l$}
  
  \put(-13,3){\tiny $\cdots$}
  \put(-10,-5){\tiny $I_1$}
  
  \put(20,3){\tiny $\cdots$}
  \put(22,-5){\tiny $I_2$}
  
  \put(55,3){\tiny $\cdots$}
  \put(57,-5){\tiny $I_3$}
  
  \put(81,-5){\tiny $\cdots$}
  
  \put(109,3){\tiny $\cdots$}
  \put(111,-5){\tiny $I_l$}

\end{picture}
\begin{picture}(86,9.5)(1,0)
  \put(8,0){\line(0,1){10.5}}
  \put(44,0){\line(0,1){10.5}}
  \put(128,0){\line(0,1){10.5}}
  \put(8,10.5){\line(1,0){68}}
  \put(81,8.5){\tiny $\cdots$}
  
  \put(96,10.5){\line(1,0){32}}
  \put(6,-8){\tiny $j_1$}
  \put(41,-8){\tiny $j_2$}
  \put(126,-8){\tiny $j_l$}
  
  \put(22,-5){\tiny $I_1$}
  
  \put(57,-5){\tiny $I_2$}
  
  \put(81,-5){\tiny $\cdots$}
  
  \put(107,-5){\tiny $I_{l-1}$}

  \put(142,-5){\tiny $I_l$}

\end{picture}
    \caption{Two types of partitions with an odd block}
    \label{fig:oddinterval}
  \end{figure}
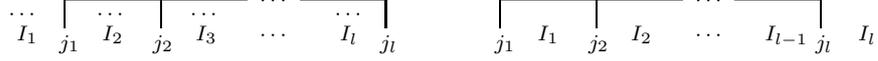
We concentrate on the first case, i.e., $I_1=\{1,2,\dots,j_1-1\}$ 
and $B_k=\{j_1,j_2,\dots,j_l\}$ where
$j_1$ is even, and $j_l=2r$.
Observe that every partition
$\pi\in \NC^{B_k}_{B_1,B_2,\dots,B_{k-1}}(2r)$ is the concatenation
of some noncrossing partition $\pi'\in
\NC_{B_1,B_2,\dots,B_{k-1}}(j_1-1)$
and $\pi''\in \NC^{B_k}_{B_1,B_2,\dots,B_{k-1}}(\{j_1,\dots,2r\})$.
Now $\pi\vee \onetwo{r}=\hat{1}_{2r}$ if and only if
$\pi'\vee 
\begin{picture}(62,3.5)(1,0)
  \put(2,0){\line(0,1){4.5}}
  \put(8,0){\line(0,1){4.5}}
  \put(14,0){\line(0,1){4.5}}
  \put(20,0){\line(0,1){4.5}}
\put(28.5,0){$\cdots$}
  \put(50,0){\line(0,1){4.5}}
  \put(56,0){\line(0,1){4.5}}
  \put(62,0){\line(0,1){4.5}}
  \put(2,4.5){\line(1,0){6}}
  \put(14,4.5){\line(1,0){6}}
  \put(50,4.5){\line(1,0){6}}
  \put(62,4.5){\line(1,0){0}}
\end{picture}
=\hat{1}_{j_1-1}
$
and 
$\pi''\vee 
\begin{picture}(62,3.5)(1,0)
  \put(2,0){\line(0,1){4.5}}
  \put(8,0){\line(0,1){4.5}}
  \put(14,0){\line(0,1){4.5}}
  \put(20,0){\line(0,1){4.5}}
  \put(26,0){\line(0,1){4.5}}
\put(33.5,0){$\cdots$}
  \put(56,0){\line(0,1){4.5}}
  \put(62,0){\line(0,1){4.5}}
  \put(2,4.5){\line(1,0){0}}
  \put(8,4.5){\line(1,0){6}}
  \put(20,4.5){\line(1,0){6}}
  \put(56,4.5){\line(1,0){6}}
\end{picture}
=\hat{1}_{\{j_1,\dots,2r\}}$.
Thus we may unfold \eqref{eq:Ktilde} and factor the sum \eqref{eq:sumNCBk} 
 to obtain
\begin{multline*}
  \sum_{\substack{\pi\in \NC^{B_k}_{B_1,B_2,\dots,B_{k-1}}(2r)\\ \pi\vee\onetwo{r}=\hat{1}_{2r}}}
  \tilde{K}_\pi(L)
  = \sum_{\substack{\pi'\in \NC_{B_1,B_2,\dots,B_{k-1}}(j_1-1)
\\ \pi'\vee 
\,
{
\setlength{\unitlength}{0.7\unitlength}
\tiny
\begin{picture}(62,3.5)(1,0)
  \put(2,0){\line(0,1){4.5}}
  \put(8,0){\line(0,1){4.5}}
  \put(14,0){\line(0,1){4.5}}
  \put(20,0){\line(0,1){4.5}}
\put(28.5,0){$\cdots$}
  \put(50,0){\line(0,1){4.5}}
  \put(56,0){\line(0,1){4.5}}
  \put(62,0){\line(0,1){4.5}}
  \put(2,4.5){\line(1,0){6}}
  \put(14,4.5){\line(1,0){6}}
  \put(50,4.5){\line(1,0){6}}
  \put(62,4.5){\line(1,0){0}}
\end{picture}
}
\,
=\hat{1}_{j_1-1}}}
    \tilde{K}_{\pi'}(L)
   \sum_{\substack{\pi''\in \NC^{B_k}_{B_1,B_2,\dots,B_{k-1}}(\{j_1,\dots,2r\}) \\
\\ \pi''\vee 
\,
{
\setlength{\unitlength}{0.7\unitlength}
\tiny
\begin{picture}(62,3.5)(1,0)
  \put(2,0){\line(0,1){4.5}}
  \put(8,0){\line(0,1){4.5}}
  \put(14,0){\line(0,1){4.5}}
  \put(20,0){\line(0,1){4.5}}
  \put(26,0){\line(0,1){4.5}}
\put(33.5,0){$\cdots$}
  \put(56,0){\line(0,1){4.5}}
  \put(62,0){\line(0,1){4.5}}
  \put(2,4.5){\line(1,0){0}}
  \put(8,4.5){\line(1,0){6}}
  \put(20,4.5){\line(1,0){6}}
  \put(56,4.5){\line(1,0){6}}
\end{picture}
}
\,
=\hat{1}_{\{j_1,\dots,2r\}}}}
    \tilde{K}_{\pi''}(L)
\\
\begin{aligned}
&  = \sum_{\sigma_1,\sigma_2,\dots,\sigma_r\in\SG_n}\sum_{\substack{\pi'\in \NC_{B_1,B_2,\dots,B_{k-1}}(j_1-1)
\\ \pi'\vee 
\,
{
\setlength{\unitlength}{0.7\unitlength}
\tiny
\begin{picture}(62,3.5)(1,0)
  \put(2,0){\line(0,1){4.5}}
  \put(8,0){\line(0,1){4.5}}
  \put(14,0){\line(0,1){4.5}}
  \put(20,0){\line(0,1){4.5}}
\put(28.5,0){$\cdots$}
  \put(50,0){\line(0,1){4.5}}
  \put(56,0){\line(0,1){4.5}}
  \put(62,0){\line(0,1){4.5}}
  \put(2,4.5){\line(1,0){6}}
  \put(14,4.5){\line(1,0){6}}
  \put(50,4.5){\line(1,0){6}}
  \put(62,4.5){\line(1,0){0}}
\end{picture}
}
\,
=\hat{1}_{j_1-1}}}
    K_{\pi'}(L_{\sigma_1}, L_{\sigma_1},\dots,L_{\sigma_{j_1'-1}},L_{\sigma_{j_1'-1}},L_{\sigma_{j_1'}})
\\
&\phantom{=xxxxxxxxxx}\times
   \sum_{\substack{\pi''\in \NC^{B_k}_{B_1,B_2,\dots,B_{k-1}}(\{j_1,\dots,2r\}) \\
\\ \pi''\vee 
\,
{
\setlength{\unitlength}{0.7\unitlength}
\tiny
\begin{picture}(62,3.5)(1,0)
  \put(2,0){\line(0,1){4.5}}
  \put(8,0){\line(0,1){4.5}}
  \put(14,0){\line(0,1){4.5}}
  \put(20,0){\line(0,1){4.5}}
  \put(26,0){\line(0,1){4.5}}
\put(33.5,0){$\cdots$}
  \put(56,0){\line(0,1){4.5}}
  \put(62,0){\line(0,1){4.5}}
  \put(2,4.5){\line(1,0){0}}
  \put(8,4.5){\line(1,0){6}}
  \put(20,4.5){\line(1,0){6}}
  \put(56,4.5){\line(1,0){6}}
\end{picture}
}
\,
=\hat{1}_{\{j_1,\dots,2r\}}}}
    K_{\pi''}(L_{\sigma_{j_1'}}, L_{\sigma_{j_1'+1}},\dots,L_{\sigma_r},L_{\sigma_r})
\\
&=
\sum_{\sigma_{j_1'},\sigma_{j_1'+1},\dots,\sigma_r\in\SG_n} 
    K^{j_1'}_{B_1,B_2,\dots,B_{k-1}}(P,P,\dots,P,L_{\sigma_{j_1'}})
\\
&\phantom{=xxxxxxxxxx}\times
   \sum_{\substack{\pi''\in \NC^{B_k}_{B_1,B_2,\dots,B_{k-1}}(\{j_1,\dots,2r\}) \\
\\ \pi''\vee 
\,
{
\setlength{\unitlength}{0.7\unitlength}
\tiny
\begin{picture}(62,3.5)(1,0)
  \put(2,0){\line(0,1){4.5}}
  \put(8,0){\line(0,1){4.5}}
  \put(14,0){\line(0,1){4.5}}
  \put(20,0){\line(0,1){4.5}}
  \put(26,0){\line(0,1){4.5}}
\put(33.5,0){$\cdots$}
  \put(56,0){\line(0,1){4.5}}
  \put(62,0){\line(0,1){4.5}}
  \put(2,4.5){\line(1,0){0}}
  \put(8,4.5){\line(1,0){6}}
  \put(20,4.5){\line(1,0){6}}
  \put(56,4.5){\line(1,0){6}}
\end{picture}
}
\,
=\hat{1}_{\{j_1,\dots,2r\}}}}
    K_{\pi''}(L_{\sigma_{j_1'}}, L_{\sigma_{j_1'+1}},\dots,L_{\sigma_r},L_{\sigma_r})
\end{aligned}
.
\end{multline*}
And  by Lemma~\ref{lemm:sumyobcietelematgeneral} the factor
$$
K^{j_1'}_{B_1,B_2,\dots,B_{k-1}}(P,P,\dots,P,L_\sigma),
$$
vanishes for every $\sigma$. 
Here we use the notation  $j'_{i}=j_{\lceil i/2 \rceil}$ 
where $\lceil \cdot \rceil$ is the ceiling function 
which rounds up to the nearest integer.
%
\end{proof}

\begin{Rem}
  \begin{enumerate}
   \item 
    Note that in the case of sample variance we have to assume identical distribution
    of the involved random variables for the cancellation phenomenon to take place;
    in the case of the free commutator this requirement is not necessary.
   \item 
The argument put forward in the previous proof
is not valid in classical probability except in the case where
$B_k$ is an interval block.
For example if $r=3$ and $B=\{1,3,6\}$
then the block $B$ alone ensures that $\pi\vee  
\begin{picture}(32,3.5)(1,0)
  \put(2,0){\line(0,1){4.5}}
  \put(8,0){\line(0,1){4.5}}
  \put(14,0){\line(0,1){4.5}}
  \put(20,0){\line(0,1){4.5}}
  \put(26,0){\line(0,1){4.5}}
  \put(32,0){\line(0,1){4.5}}
  \put(2,4.5){\line(1,0){6}}
  \put(14,4.5){\line(1,0){6}}
  \put(26,4.5){\line(1,0){6}}
\end{picture}      
  =   
\begin{picture}(32,3.5)(1,0)
  \put(2,0){\line(0,1){4.5}}
  \put(8,0){\line(0,1){4.5}}
  \put(14,0){\line(0,1){4.5}}
  \put(20,0){\line(0,1){4.5}}
  \put(26,0){\line(0,1){4.5}}
  \put(32,0){\line(0,1){4.5}}
  \put(2,4.5){\line(1,0){30}}
\end{picture}$ 
and thus
$$
\sum_{
\substack{
  \pi\in \SP^{B}(6)\\
  \pi\vee 
\begin{picture}(32,3.5)(1,0)
  \put(2,0){\line(0,1){4.5}}
  \put(8,0){\line(0,1){4.5}}
  \put(14,0){\line(0,1){4.5}}
  \put(20,0){\line(0,1){4.5}}
  \put(26,0){\line(0,1){4.5}}
  \put(32,0){\line(0,1){4.5}}
  \put(2,4.5){\line(1,0){6}}
  \put(14,4.5){\line(1,0){6}}
  \put(26,4.5){\line(1,0){6}}
\end{picture}      
  =   
}}
K_\pi(T_1,T_1,T_2,T_2,T_3,T_3)
=K_3(T_1,T_2,T_3)\,\tau( T_1T_2T_3),
$$
where the sum runs over all set partitions.
\end{enumerate}

\end{Rem}

\begin{proof}[Proof of Theorem~\ref{twr:OddElement}]
Let  $\X_1,\dots,\X_n$ be free copies of a fixed random variable $X$.
We apply Lemma~\ref{lem:oddcancellation} for $L=X_1-\overline{X}$ and 
$P=\sum_{\sigma\in\SG_n} L_\sigma^2=(n-1)!\,Q_n$ 
to conclude from \eqref{eq:telescope} that
$$
K_r(Q_n) = 
\left(\frac{1}{(n-1)!}\right)^r
\sum_{\substack{\pi\in \NCeven(2r)\\ \pi\vee\onetwo{r}=\hat{1}_{2r}}}
\tilde{K}_\pi(L).
$$
Now the fact that $X_i$ are free identically distributed implies that in the sum
\eqref{eq:Ktilde}
every term either vanishes or is a multiple of $K_\pi(X)$.
Therefore we may write
 \begin{equation} 
\label{eq:Pom1DowodTwrGlo} 
K_r(\Q_n) =
    \sum_{\pi\in \NCeven_0(2r)}
    c_n(\pi)\,K_\pi(X)
\end{equation}
where $ \NCeven_0(2r) =\{\pi\in\NCeven(2r) \mid \pi \vee \onetwo{r}= \hat{1}_{2r}\}$ and $c_n(\pi)\in \R$. 
We will derive an explicit formula for these coefficients
in Section~\ref{sec:Rcyclic} below.
Using identity \eqref{eq:Pom1DowodTwrGlo}  we will show that all even cumulants of higher order vanish,
i.e., $K_{2i}(X)=0$ for $i\geq 2$.
First let us compute the parameters $c_n(\pi)$ in the extreme cases
$\pi=\pispecial{r}$
and 
$\pi=\hat{1}_{2r}$.

For $\pispecial{r}=
\begin{picture}(56,6.5)(1,0)
  \put(2,0){\line(0,1){7.5}}
  \put(8,0){\line(0,1){4.5}}
  \put(14,0){\line(0,1){4.5}}
  \put(20,0){\line(0,1){4.5}}
  \put(26,0){\line(0,1){4.5}}
  \put(44,0){\line(0,1){4.5}}
  \put(50,0){\line(0,1){4.5}}
  \put(56,0){\line(0,1){7.5}}
\put(28.0,0){$\cdots$}
  \put(8,4.5){\line(1,0){6}}
  \put(20,4.5){\line(1,0){6}}
  \put(44,4.5){\line(1,0){6}}
  \put(2,7.5){\line(1,0){54}}
\end{picture}$ only the
second cumulant  contributes to  $c_n(\pispecial{r})$ and
the value of the latter does not change if we replace
$X$ with a centered semicircular variable of variance $\sigma^2$.
In this case Corollary~\ref{corr:dwupartycje} implies that
$$
K_r(Q_n) = c_n(
\begin{picture}(56,6.5)(1,0)
  \put(2,0){\line(0,1){7.5}}
  \put(8,0){\line(0,1){4.5}}
  \put(14,0){\line(0,1){4.5}}
  \put(20,0){\line(0,1){4.5}}
  \put(26,0){\line(0,1){4.5}}
  \put(44,0){\line(0,1){4.5}}
  \put(50,0){\line(0,1){4.5}}
  \put(56,0){\line(0,1){7.5}}
\put(28.0,0){$\cdots$}
  \put(8,4.5){\line(1,0){6}}
  \put(20,4.5){\line(1,0){6}}
  \put(44,4.5){\line(1,0){6}}
  \put(2,7.5){\line(1,0){54}}
\end{picture})\,\sigma^{2r},
$$
and from Proposition~\ref{twr:Lehner:2003} we infer that
$c_n(
\begin{picture}(56,6.5)(1,0)
  \put(2,0){\line(0,1){7.5}}
  \put(8,0){\line(0,1){4.5}}
  \put(14,0){\line(0,1){4.5}}
  \put(20,0){\line(0,1){4.5}}
  \put(26,0){\line(0,1){4.5}}
  \put(44,0){\line(0,1){4.5}}
  \put(50,0){\line(0,1){4.5}}
  \put(56,0){\line(0,1){7.5}}
\put(28.0,0){$\cdots$}
  \put(8,4.5){\line(1,0){6}}
  \put(20,4.5){\line(1,0){6}}
  \put(44,4.5){\line(1,0){6}}
  \put(2,7.5){\line(1,0){54}}
\end{picture})=n-1$.


To compute the value of $c_n(\hat{1}_{2r})$ 
it is convenient to switch to tensor notation and to identify the multilinear
cumulant functional $K_r:\mathcal{A}^n\to \IC$ with
its linear extension $K_r:\mathcal{A}^{\otimes n}\to \IC$.
Let us now assume without loss of generality that $\sigma=1$.
We have to evaluate
\begin{align*}
\tilde{K}_{\hat{1}_{2r}}(L)
&= \sum_{i_1,i_2,\dots,i_r=1}^n K_{2r}(X_{i_1}-\overline{X},X_{i_1}-\overline{X},X_{i_2}-\overline{X},X_{i_2}-\overline{X},\dots,X_{i_r}-\overline{X},X_{i_r}-\overline{X})
\\
&= K_{2r}
\biggl(
\biggl(
  \sum_{i=1}^n (X_i-\overline{X})\otimes(X_i-\overline{X})
\biggr)^{\otimes r}
\biggr)
\\
&= K_{2r}
\biggl(
\biggl(
  \sum_{i=1}^n X_i\otimes X_i-n\overline{X}\otimes\overline{X}
\biggr)^{\otimes r}
\biggr).
\\
\end{align*}
Expanding this power yields cumulants of the form
\begin{align*}
K_{2r}\Bigl(
\Bigl(
\sum_{i=1}^n X_i\otimes X_i
\Bigr)^{\otimes k}
\otimes
\Bigl(
-n\overline{X}\otimes\overline{X}
\Bigr)^{\otimes(r-k)}
\Bigr)
&= n
  K_{2r}\Bigl(
\Bigl(
 X_1\otimes X_1
\Bigr)^{\otimes k}
\otimes
\Bigl(
-\frac{1}{n} X_1\otimes X_1
\Bigr)^{\otimes(r-k)}
\Bigr)
\\
&= 
n
\left(
-\frac{1}{n}
\right)^{r-k}
  K_{2r}(X),
\end{align*}
and in total
$$
\tilde{K}_{\hat{1}_{2r}}(L) 
= n\sum_{k=0}^r
    \binom{r}{k}
    \left(
      -\frac{1}{n}
    \right)^{r-k}
    K_{2r}(X)
= 
n
\left(
1-\frac{1}{n}
\right)^r 
K_{2r}(X) .
$$
Next, to evaluate even cumulants, equate  the $r$-th cumulants of $\Q_n$ and $\chi^2(n-1)$, i.e.,
$$K_r(\Q_n)=K_r(\chi^2(n-1))=n-1.$$ 
Denote  $\NCeven_0'(2r)=\NCeven_0(2r) \setminus \{
\begin{picture}(56,6.5)(1,0)
  \put(2,0){\line(0,1){7.5}}
  \put(8,0){\line(0,1){4.5}}
  \put(14,0){\line(0,1){4.5}}
  \put(20,0){\line(0,1){4.5}}
  \put(26,0){\line(0,1){4.5}}
  \put(44,0){\line(0,1){4.5}}
  \put(50,0){\line(0,1){4.5}}
  \put(56,0){\line(0,1){7.5}}
\put(28.0,0){$\cdots$}
  \put(8,4.5){\line(1,0){6}}
  \put(20,4.5){\line(1,0){6}}
  \put(44,4.5){\line(1,0){6}}
  \put(2,7.5){\line(1,0){54}}
\end{picture}
,
\hat{1}_{2r}
\}$,
then we have
 \begin{equation}
 \begin{aligned}
   n-1 &=\sum_{\pi\in \NCeven_0(2r)}c_n(\pi) \,K_\pi(X)
    \\ &= \sum_{\pi\in \NCeven_0'(2r)}c_n(\pi)\,K_\pi(X)
          +\sum_{ {\substack{\pi\in \NC_{2}(2r)\\ \pi\vee\onetwo{r}=\hat{1}_{2r}}}}c_n(\pi)
          \,K_\pi(X) + 
          c_n(\hat{1}_{2r})\,K_{2r}(X)
\\ &=
\sum_{\pi\in \NCeven_0'(2r)}c_n(\pi)\,K_\pi(X)
 + c_n(
\begin{picture}(56,6.5)(1,0)
  \put(2,0){\line(0,1){7.5}}
  \put(8,0){\line(0,1){4.5}}
  \put(14,0){\line(0,1){4.5}}
  \put(20,0){\line(0,1){4.5}}
  \put(26,0){\line(0,1){4.5}}
  \put(44,0){\line(0,1){4.5}}
  \put(50,0){\line(0,1){4.5}}
  \put(56,0){\line(0,1){7.5}}
\put(28.0,0){$\cdots$}
  \put(8,4.5){\line(1,0){6}}
  \put(20,4.5){\line(1,0){6}}
  \put(44,4.5){\line(1,0){6}}
  \put(2,7.5){\line(1,0){54}}
\end{picture})
 +\frac{(n-1)^{r}}{n^{r-1}}\,K_{2r}(X)
\\ &=
\sum_{\pi\in \NCeven_0'(2r)}c_n(\pi)\,K_\pi(X)
  +n-1
  +\frac{(n-1)^{r}}{n^{r-1}}\,K_{2r}(X).
 \end{aligned}  
 \end{equation}
This yields
\begin{equation}
\label{eq:PomDowod1} 
 \sum_{\pi\in \NCeven_0'(2r)}c_n(\pi)\,K_\pi(X)+\frac{(n-1)^{r}}{n^{r-1}}\,K_{2r}(X)=0.
\end{equation}  
%
and the blocks of any $\pi\in\NCeven_0'(2r)$ have size strictly smaller
than $2r$, it follows by induction that
 $K_{2r}(X)=0$ for $r\geq 2$. 

Conversely, 
suppose that $\X_i$'s are odd, then from Lemma~\ref{lem:oddcancellation}
 we get 
\begin{align*}
  K_r(\Q_n) 
  &=\sum_{\pi\in \NCeven_0(2r)}
     c_n(\pi)\,K_\pi(X)
   =\sum_{\substack{\pi\in \NC_{2}(2r)\\ \pi\vee\onetwo{r}=\hat{1}_{2r}}}c_n(\pi)\,K_\pi(X)
 \\
 &= c_{n}(
\begin{picture}(56,6.5)(1,0)
  \put(2,0){\line(0,1){7.5}}
  \put(8,0){\line(0,1){4.5}}
  \put(14,0){\line(0,1){4.5}}
  \put(20,0){\line(0,1){4.5}}
  \put(26,0){\line(0,1){4.5}}
  \put(44,0){\line(0,1){4.5}}
  \put(50,0){\line(0,1){4.5}}
  \put(56,0){\line(0,1){7.5}}
\put(28.0,0){$\cdots$}
  \put(8,4.5){\line(1,0){6}}
  \put(20,4.5){\line(1,0){6}}
  \put(44,4.5){\line(1,0){6}}
  \put(2,7.5){\line(1,0){54}}
\end{picture}
) =n-1. 
\end{align*}

\end{proof}

\noindent  \textit{Proof of Proposition \ref{twr:kruglow}}. 
Recall that as a consequence of the free L\'evy-Khinchin formula
 (see for example \cite[Theorem~13.16]{NicaSpeicher:2006}) 
the random variable $\X_i$ is freely infinitely divisible if and only if 
$$K_{n+2}(\X_i)=\int_{\IR}x^n d\rho_i(x),$$ 
for some positive finite measure $\rho_i(x)$ on $\R$. 
For the semicircular distribution the measure is $\rho_i=\delta_0$ and it
suffices to show that $\int x^2d\rho_i(x)=K_4(X_i)=0$.
Now if $\tau(Y_1)=\tau(Y_2)=\tau(Y_3)=\tau(Y_4)=0$ 
then the product formula from Theorem~\ref{thm:krawczyk} implies
\begin{equation}
  \label{eq:krawczykQ}
  K_2(Y_1Y_2,Y_3Y_4)=K_2(Y_1,Y_4)\,K_2(Y_2,Y_3)+K_4(Y_1,Y_2,Y_3,Y_4)
  .
\end{equation}
We will apply this to $Q_n$, so let us first compute the cumulants 
which will appear after evaluation of \eqref{eq:krawczykQ}.
By assumption $K_2(X_i)=1$ for all $i$ and therefore the covariances are
$$
K_2(\X_{i}-\overline{\X},\X_{j}-\overline{\X})=
\begin{cases}
\frac{n-1}{n} &\text{if $i=j$,}  \\
-\frac{1}{n} &\text{if $i\neq j$.} 
\end{cases}
$$    
It remains to consider cumulants of order 4.
First,
\begin{align*}
  \sum_{i=1}^n K_4(X_i-\overline{X})
  &= \sum_{i=1}^n 
      \biggl(
        \left(
          1-\frac{1}{n}
        \right)^4
        K_4(X_i)
        +
        \sum_{l\ne i}
        \left(
          -\frac{1}{n}
        \right)^4
        K_4(X_l)
      \biggr)
      \\
  &=
  \left(
    \left(
      1-\frac{1}{n}
    \right)^4
    +
    \frac{n-1}{n^4} 
  \right)
  \sum_{i=1}^n   K_4(X_i),
\end{align*}
second,
\begin{multline*}
  \sum_{\substack{i,j=1\\ i\ne j}}^n 
    K_4(X_i-\overline{X},X_i-\overline{X},X_j-\overline{X},X_j-\overline{X})
    \\
    \begin{aligned}
      &= 
      \sum_{\substack{i,j=1\\ i\ne j}}^n 
      \biggl(
      \left(
        1-\frac{1}{n}
      \right)^2
      \left(
        -\frac{1}{n}
      \right)^2
      (
      K_4(X_i)
      +
      K_4(X_j)
      )
      +
      \sum_{l\ne i,j}
      \left(
        -\frac{1}{n}
      \right)^4
      K_4(X_l)
      \biggr)
      \\
      &= 
      \left(
        2(n-1)
        \left(
          1-\frac{1}{n}
        \right)^2
        \left(
          \frac{1}{n}
        \right)^2
        +
        \frac{(n-1)(n-2)}{n^4}
      \right)
      \sum_{i=1}^n
       K_4(X_i),
    \end{aligned}
\end{multline*}
and thus
$$
  \sum_{\substack{i,j=1}}^n 
    K_4(X_i-\overline{X},X_i-\overline{X},X_j-\overline{X},X_j-\overline{X})
    =
    \frac{(n-1)^2}{n^2}
    \sum_{i=1}^n
     K_4(X_i).
$$
Using these formulas
we now proceed to  \eqref{eq:krawczykQ}
and obtain
\begin{align*} 
  n-1
  &= K_2(\Q_n,\Q_n)
  \\
  &=\sum_{i,j=1}^n K_2((\X_i-\overline{\X})^2,(\X_j-\overline{\X})^2)
  \\
  &=\sum_{i,j=1}^n K_2(\X_i-\overline{\X},\X_j-\overline{\X})\,
                  K_2(\X_j-\overline{\X},\X_i-\overline{\X})
\\ &\phantom{===}
    +
    \sum_{i,j=1}^n K_4(\X_i-\overline{\X},\X_i-\overline{\X},
                     \X_j-\overline{\X},\X_j-\overline{\X})
  \\
  &=\sum_{i=1}^n [K_2((\X_i-\overline{\X}),(\X_i-\overline{\X}))]^2+\sum_{i=1,j=1, i\neq j}^n[K_2(\X_i-\overline{\X},\X_j-\overline{\X})]^2
\\
 &\phantom=+\sum_{i=1}^n K_4(\X_i-\overline{\X})+\sum_{i=1,j=1, i\neq j}^nK_4(\X_i-\overline{\X},\X_i-\overline{\X},\X_j-\overline{\X},\X_j-\overline{\X}) \\
 &= \frac{(n-1)^2}{n}+\frac{n-1}{n}+\frac{(n-1)^2}{n^2}\sum_{i=1}^nK_4(\X_i). 
\end{align*}
So we see that $\sum_{i=1}^nK_4(\X_i)=\sum_{i=1}^n\int_{\R}x^2 d\rho_i(x)=0$ and thus $\rho_i(x)=\delta_0(x)$. 
Note that the above measure is the free L\'evy measure  of the semicircle distribution with mean zero, and variance
one. \begin{flushright} $\square$ \end{flushright}

\begin{proof}[Proof of Proposition \ref{twr:OdpowidenikKaganLetac}]
Part (1).
We write $a=\norm{a}\theta=\norm{a}(\theta_1,\dots,\theta_n)$ where  $\theta$  belongs to the unit sphere
of $E$, i.e., $\sum\theta_i=0$. 
Thus for $\norm{a}>0,$ $a_i=\norm{a}\theta_i$ and $r\geq 2$ we have
\begin{equation} 
\label{eq:kumualntydonieskonczonosci}
  \begin{aligned}
    \frac{K_r(\sum_{i=1}^n(\X_i-\overline{\X}+a_i)^2)}{\norm{a}^r}
  &=K_r\Big(\sum_{i=1}^n(\X_i-\overline{\X})^2/\norm{a}
            -\sum_{i=1}^n2(\X_i-\overline{\X})\,\theta_i+\norm{a}\Big)\\
  &=K_r\Big(\sum_{i=1}^n(\X_i-\overline{\X})^2/\norm{a}-\sum_{i=1}^n2(\X_i-\overline{\X})\,\theta_i\Big)
\\
&=K_r\Big(\sum_{i=1}^n(\X_i-\overline{\X})^2/\norm{a}-\sum_{i=1}^n2\X_i\theta_i\Big).
  \end{aligned}
\end{equation}
 By the hypothesis the left hand side of \eqref{eq:kumualntydonieskonczonosci}  does not depend on $\theta$, and thus the limit on the right hand side
\begin{align*} 
\lim_{\norm{a}\to + \infty}\frac{K_r(\sum_{i=1}^n(\X_i-\overline{\X}+a_i)^2)}{\norm{a}^r}&=K_r\Big(-\sum_{i=1}^n2\X_i\theta_i\Big),
\end{align*}
does not depend on $\theta$ either.
Now freeness implies that
 \begin{align} 
S_r(\theta_1,\dots,\theta_n):=K_r\Big(\sum_{i=1}^n\X_i\theta_i\Big)=\sum_{i=1}^n\theta_i^rK_r(\X_i)=\Big(\sum_{i=1}^n\theta_i^r\Big)K_r,
\end{align}
is a constant function on the unit sphere 
of the space $E$. Thus we see $S_2(\theta_1,\dots,\theta_n)=K_2$ and $S_r$ for $r\geq 3$ is constant function on the unit sphere 
of the space $E$ if and only $K_r=0$  for $r\geq 3$.

We now show part $(2)$ of Proposition \ref{twr:OdpowidenikKaganLetac}.  It's easy to observe that for $\theta\in E$ we have 
 \begin{align} 
K_r\Big(\sum_{i=1}^n(\X_i-\overline{\X})\,\theta_i\Big)=\sum_{i=1}^n\theta_i^rK_r(\X_i)=\Big(\sum_{i=1}^n\theta_i^r\Big)K_r,
\end{align}
is constant function on the unit sphere 
of the space $E$ and from the above consideration we get the statement.

\end{proof}
\begin{Rem}
  The preceding proof is also valid in the general setting of
  \cite{Lehner:2003}.
\end{Rem}

\section{$R$-cyclic matrices and free infinite divisibility of quadratic forms}
\label{sec:Rcyclic}
In this section we  show how the sample variance can be combined with
the concept of $R$-cyclicity and give a recipe for computing the coefficients
$c_n(\pi)$ in equation \eqref{eq:Pom1DowodTwrGlo}. In particular we also show
that sample variance  preserves free infinite divisibility.

\subsection{$R$-cyclic matrices and the distribution of sample variance}


The concept of $R$-cyclicity was introduced by Nica, Shlyakhtenko and Speicher
\cite{NicaSpeicherShlyakhtenko:2002}. Our aim is now to exhibit its relation
to the sample variance and other quadratic forms.  
We show that the theory of  $R$-cyclic matrices 
can be used to compute the distribution
of the sample variance and  give  a formula for the cumulants of the sample
variance  in terms of the even cumulants, which generalizes and unifies
two types of results, namely cumulants of squares of even elements
\cite[Proposition~11.25]{NicaSpeicher:2006} and cumulants of quadratic
forms in gaussian random variables \cite[Proposition~4.4]{Lehner:2003}. 

Here we consider matrices over a non-commutative probability space.
Let $(\A,\tau)$ be a non-commutative probability
space, and let $n$ be a positive integer. 
The algebra $M_n(\A)$
of $n\times n$ matrices over $\A$ is a noncommutative probability space
with canonical expectation functional
$$\tau_n(A)=\frac{1}{n}\sum_{i=1}^n\tau(a_{i,i}),$$
where $A=[a_{i,j}]_{i,j=1  }^n$ is  a matrix in $M_n(\A)$.  Then $(M_n(\A),\tau_n)$ is itself a non-commutative probability space.
The definition of $R$-cyclicity is in terms of
the joint $R$-transform of the entries of the matrix: one requires that
only the cyclic non-crossing cumulants of the entries are allowed to be
different from $0$, see Definition~\ref{def:Rcyclic} below.
Equivalently, it was shown in \cite[Theorem~8.2]{NicaSpeicherShlyakhtenko:2002}
that $R$-cyclicity is equivalent to the fact that $A$ is free from $M_n(\IC)$
with amalgamation over the algebra $\mathcal{D}_n$ of scalar diagonal matrices
with respect to the conditional expectation
\begin{equation}
  \label{eq:ED}
\begin{aligned}
  \ED:M_n(\mathcal{A})&\to M_n(\IC)\\
  A &\mapsto \sum_{i=1}^n E_i\tau^{(n)}(A)E_i,
\end{aligned}
\end{equation}
where by $E_i$ we denote the projection matrix onto the $i$-th unit vector
and $\tau^{(n)}(A)_{ij}=\tau(a_{i,j})$ is the entry-wise trace.

\begin{defi}\label{def:Rcyclic}
  Let $(M_n(\A),\tau_n)$ and $n$ be as above, 
  then a matrix $\bX=[\X_{i,j}]_{i,j=1  }^n\in M_n(\A)$. 
  is said to be   \emph{$R$-cyclic} if 
  for every $r\geq 1$ and for every choice of indices
  $1\leq i_1,j_1,\dots,i_r,j_r\leq n $ the cumulant
$$
K_r(\X_{i_1,j_1},\X_{i_2,j_2},\dots,\X_{i_r,j_r})=0,$$
unless the indices are cyclic in the sense
that $ j_1=i_2,j_2=i_3,\dots,j_{r-1}=i_r,j_r=i_1$. 
In this case the formal noncommutative power series
\begin{equation}
  \label{eq:determiningseries}
f_{\bX}(z_1,z_2,\dots,z_n)
=\sum_{r=1}^\infty\sum_{i_1,i_2,\dots,i_r=1}^n
   K_r(\X_{i_r,i_1},\X_{i_1,i_2},\dots,\X_{i_{r-1},i_r})
   \,
   z_{i_1}z_{i_2}\cdots z_{i_r},
\end{equation}
is called the \emph{determining series} of the entries of $\bX$.
\end{defi}
The concept of $R$-cyclicity generalizes the concept of $R$-diagonality
\cite[Ex.~20.5]{NicaSpeicher:2006} in the sense that $X$ is $R$-diagonal
if and only if the matrix 
$
\left[
  \begin{smallmatrix}
    0&X\\
    X^*&0
  \end{smallmatrix}
\right]
$
is $R$-cyclic.

\begin{lemm}
  \label{lem:kreweras}
  For scalar matrices $A\in M_n(\IC)$ we have
  \begin{enumerate}[(i)]
   \item \label{it:kreweras1}
    $$
      \sum_{i=1}^n E_iA_1E_iA_2\dotsm E_iA_rE_i
    = \ED(A_1)\ED(A_2)\dotsm \ED(A_r).
    $$
   \item  \label{it:kreweras2}
    Let $\pi\in\NC(r)$, then
    $$
    \sum_{\ker\underline{i}\geq\pi} \ED (A_1E_{i_1}A_2E_{i_2}\dotsm A_rE_{i_r}A_{r+1})
    =\ED[\Krewlr\pi] (A_1,A_2,\dots,A_{r+1}).
    $$
  \end{enumerate}
\end{lemm}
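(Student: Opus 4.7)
The plan is to treat (i) by direct matrix computation and (ii) by induction on $r$, using an interval block of $\pi$ to reduce.

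For (i), since each $A_k \in M_n(\IC)$ is a scalar matrix, $E_i A_k E_i = (A_k)_{ii} E_i$; iterating gives
\[
E_i A_1 E_i A_2 E_i \dots A_r E_i = \Bigl(\prod_{k=1}^r (A_k)_{ii}\Bigr)\, E_i,
\]
and summation over $i$ produces the diagonal matrix with entries $\prod_k (A_k)_{ii}$, which coincides with $\ED(A_1)\ED(A_2)\dots\ED(A_r)$ because on scalar matrices $\ED$ is the diagonal projection and a product of diagonal matrices is diagonal with entrywise-product entries.

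For (ii), the base case $r=0$ is trivial. For the inductive step I pick an interval block $B = \{k,k+1,\dots,k+p-1\}$ of $\pi$ (such a block exists in every noncrossing partition with $r\geq 1$). The constraint $\ker\underline{i} \geq \pi$ forces $i_k = i_{k+1} = \dots = i_{k+p-1} = i$ for a common value $i$, and the substring of the word depending on this $i$ is $A_k E_i A_{k+1} E_i \dots A_{k+p-1} E_i A_{k+p}$. Applying the same collapse as in (i) to the inner factor $E_i A_{k+1} E_i \dots A_{k+p-1} E_i$ and summing over $i$ yields
\[
\sum_{i=1}^n A_k E_i A_{k+1} E_i \dots A_{k+p-1} E_i A_{k+p}
= A_k\,\ED(A_{k+1})\dots\ED(A_{k+p-1})\,A_{k+p}.
\]
Hence the LHS of (ii) for $(\pi; A_1,\dots,A_{r+1})$ equals the same LHS for the reduced data $\pi' := \pi \setminus B \in \NC(r-p)$ with the single merged matrix $A'_k := A_k\,\ED(A_{k+1})\dots\ED(A_{k+p-1})\,A_{k+p}$ substituted for $A_k, A_{k+1}, \dots, A_{k+p}$, and the inductive hypothesis then produces $\ED[\Krewlr{\pi'}](A_1,\dots,A_{k-1},A'_k,A_{k+p+1},\dots,A_{r+1})$.

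It then remains to identify this with $\ED[\Krewlr\pi](A_1,\dots,A_{r+1})$. The combinatorial content is that, for an interval block $B$ of $\pi$, the interior points $\bar k, \overline{k+1}, \dots, \overline{k+p-2}$ of $\Krewlr\pi$ are forced to be singletons (any nontrivial grouping among them would interlace with $B$ in the interlaced set), and the flanking points $\overline{k-1}$ and $\overline{k+p-1}$ lie in a common block of $\Krewlr\pi$ (the arc joining them over $B$ creates no crossings with $\pi$ or with the remaining structure, so maximality of $\Krewlr\pi$ forces the identification). Each of the $p-1$ singletons $\{\bar j\}$ contributes a factor $\ED(A_{j+1})$ sitting between the two flanking matrices inside the enclosing block, and collapsing these factors together with the flanking matrices into $A'_k$ transforms $\Krewlr\pi$ into $\Krewlr{\pi'}$ under the natural relabeling. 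The main obstacle is precisely this combinatorial verification; a cleaner alternative is to use Lemma~\ref{lemm:kreweras} and induct on the sub-partitions $\pi_1,\dots,\pi_p$ cut out by the last block of $\pi$, since the decomposition $\Krewl\pi = \Krewlr{\pi_1}\dots\Krewlr{\pi_p}$ together with the irreducibility of $\Krewlr\pi$ allows the two sides to be matched block by block, with part (i) handling the collapse of the last block of $\pi$ and the inductive hypothesis applied to each sub-partition.
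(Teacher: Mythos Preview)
Your proof is correct. Part (i) is identical to the paper's argument. For part (ii) your primary route differs from the paper's: you remove an \emph{interval} block $B$ of $\pi$, collapse the corresponding string of $E_i$'s via part (i), and induct on $\pi\setminus B$, which forces you to verify directly how $\Krewlr\pi$ behaves under removal of an interval block (the interior complement points become singletons, the two flanking points merge). The paper instead singles out the \emph{last} block $B$ of $\pi$ (the one containing $r$), uses part (i) to strip off the common index running through $B$, and then applies the structural decomposition $\Krewl\pi=\Krewlr{\pi_1}\dotsm\Krewlr{\pi_p}$ from Lemma~\ref{lemm:kreweras} to invoke the inductive hypothesis on each gap-partition $\pi_k$ separately. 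Your approach is slightly more elementary in that it needs only a single smaller partition per step, but it pays for this with the ad hoc combinatorial check on $\Krewlr$; the paper's approach offloads that bookkeeping entirely onto Lemma~\ref{lemm:kreweras}, which is exactly the ``cleaner alternative'' you yourself point to at the end of your write-up.
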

\begin{proof}
  Part~\eqref{it:kreweras1} follows immediately from the expansion
  $$
  \sum_{k=1}^n E_kA_1E_kA_2\dotsm A_rE_k
  = \sum_{k=1}^n E_{k} a^{(1)}_{k,k}a^{(2)}_{k,k}\dotsm a^{(r)}_{k,k}.
  $$
  To see part~\eqref{it:kreweras2} we single out the last block
  of $\pi$ (i.e., the block containing $r$, see Lemma~\ref{lemm:kreweras}),
  say
  $B=\{j_1<j_2<\dotsm <j_p=r\}$,
  and group the remaining blocks into
  subpartitions, empty partitions allowed,
  say $\pi_1\in\NC([1,j_1-1])$, 
  $\pi_2\in\NC([j_1+1,j_2-1]),\dots,\pi_p\in\NC([j_{p-1}+1,j_p-1])$.
  Then we have
  $$
    \ED\biggl(
       \sum_{\ker\underline{i}\geq\pi}  (A_1E_{i_1}A_2E_{i_2}\dotsm E_{i_r} A_{r+1})
       \biggr)
       =
    \ED\biggl(
         \sum_i
         A_1'E_iA_2'E_i\dots A_p'E_i A_{r+1}
       \biggr),
  $$
  where
  $$
  A_k'=  \sum_{\ker\underline{i}\geq \pi_k} 
  A_{j_{k-1}+1}E_{i_1}
  A_{j_{k-1}+2}E_{i_2}
  \dotsm
  A_{j_{k}}.
  $$
  By part~\eqref{it:kreweras1} this is
  $$
  \ED(A_1'\ED(A_2') \dots\ED(A_p') A_{r+1}),
  $$
  and by induction this is 
  $$
  \ED(A_1' \ED[\Krewlr{\pi_2}](A_{j_1+1},\dots,A_{j_2}) 
           \dotsm 
           \ED[\Krewlr{\pi_p}](A_{j_{p-1}+1},\dots,A_r) A_{r+1})
  = \ED[\Krewlr\pi](A_1,A_2,\dots,A_{r+1}),
  $$
  where we used Lemma~\ref{lemm:kreweras}.
\end{proof}
\begin{prop}
  \label{prop:Rcyclic}
  Let $\X_1, \X_2,\dots, \X_n\in \A$ be a free family of even
  random variables and $A=[a_{i,j}]_{i,j=1}^n\in M_n(\C)$ a scalar matrix.
  Then
    the Hadamard product matrix
    \begin{equation}
      \label{eq:ZAX}
      \mathbf{Z}=A\circ\bX= [a_{i,j}X_iX_j]_{i,j=1  }^n= 
      \begin{bmatrix}
        a_{1,1}\X_1^2 & a_{1,2}\X_1\X_2 & \dots  &  a_{1,n}\X_1\X_n  \\ 
        a_{2,1}\X_2\X_1 &  a_{2,2}\X_2^2 & \dots & a_{2,n}\X_2\X_n 
        \\
        \multicolumn{4}{c}{\dotfill}\\
        a_{n,1}\X_n\X_1 &  a_{n,2}\X_n\X_2 & \dots &   a_{n,n}\X_n^2
      \end{bmatrix},
    \end{equation}
    is  $R$-cyclic.
\end{prop}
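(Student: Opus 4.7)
The plan is to apply the product formula of Theorem~\ref{thm:krawczyk} directly to the cumulant
$$
K_r(Z_{i_1,j_1}, Z_{i_2,j_2}, \dots, Z_{i_r,j_r})
= a_{i_1,j_1}a_{i_2,j_2}\cdots a_{i_r,j_r}\,
  K_r(\X_{i_1}\X_{j_1},\X_{i_2}\X_{j_2},\dots,\X_{i_r}\X_{j_r}),
$$
and then to read off from the surviving partitions exactly which index patterns are allowed. By Theorem~\ref{thm:krawczyk} the right-hand cumulant expands as a sum of $K_\pi(\X_{i_1},\X_{j_1},\dots,\X_{i_r},\X_{j_r})$ over $\pi \in \NC(2r)$ with $\pi \vee \onetwo{r} = \hat{1}_{2r}$, since the interval partition associated with the grouping of the $2r$ arguments into $r$ consecutive pairs is precisely $\onetwo{r}$.

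Next I would label positions of the argument list so that position $2k-1$ carries $\X_{i_k}$ and position $2k$ carries $\X_{j_k}$, and denote by $h:[2r]\to[n]$ the associated index map. Two standard reductions apply. First, by the vanishing of mixed cumulants for free families (cf.~\eqref{eq:kerh>=pi}) only those $\pi$ with $\ker h \geq \pi$ survive. Second, because each $\X_i$ is even, any contributing $\pi$ must lie in $\NCeven(2r)$. Lemma~\ref{lemm:lematoparzystych} is then the key tool: every even partition $\pi$ satisfying $\pi \vee \onetwo{r} = \hat{1}_{2r}$ must dominate $\pispecial{r}$, so in particular the kernel condition $\ker h \geq \pispecial{r}$ must hold.

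Unpacking $\ker h \geq \pispecial{r}$ in the chosen labelling is the decisive combinatorial step. The block $\{1,2r\}$ of $\pispecial{r}$ forces $i_1 = j_r$, while the blocks $\{2k,2k+1\}$ for $k=1,\dots,r-1$ force $j_k = i_{k+1}$. These are exactly the cyclic relations required in Definition~\ref{def:Rcyclic}. Consequently, whenever $(i_1,j_1,\dots,i_r,j_r)$ fails to be cyclic, the admissible set of $\pi$ is empty and the cumulant vanishes, which is $R$-cyclicity. The step most likely to require care is this index bookkeeping, namely verifying that the pairings dictated by $\pispecial{r}$ translate to the cyclic identification $j_k = i_{k+1}$ (with $j_r = i_1$) rather than some off-by-one shift. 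As a byproduct, by Lemma~\ref{lemm:lematoparzystych} the remaining sum over $[\pispecial{r},\hat{1}_{2r}]$ is naturally parametrised by $\NC(r)$, which sets up the concrete form of the determining series \eqref{eq:determiningseries} and paves the way for extracting the coefficients $c_n(\pi)$ appearing in \eqref{eq:Pom1DowodTwrGlo}.
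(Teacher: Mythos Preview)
Your proposal is correct and follows essentially the same route as the paper's proof: apply the product formula of Theorem~\ref{thm:krawczyk}, use freeness and evenness to reduce to $\pi\in\NCeven(2r)$, invoke Lemma~\ref{lemm:lematoparzystych} to replace the condition $\pi\vee\onetwo{r}=\hat{1}_{2r}$ by $\pi\geq\pispecial{r}$, and then read off from \eqref{eq:kerh>=pi} that $\ker h\geq\pispecial{r}$, which is precisely the cyclic index pattern. The only cosmetic difference is the order in which you state the two reductions; your index bookkeeping is fine and matches the paper's conclusion $i_1=i_{2r}$, $i_{2j}=i_{2j+1}$.
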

\begin{proof}
    We make use of the product formula of Theorem~\ref{thm:krawczyk}
    and Lemma~\ref{lemm:lematoparzystych} to compute
    \begin{equation}
      \label{eq:KrHx1Xh2}
    \begin{aligned}
      K_r(X_{i_1}X_{i_2},
         X_{i_3}X_{i_4},
         \dots,
         X_{i_{2r-1}}X_{i_{2r}})
&= \sum_{\substack{\pi\in\NCeven(2r)\\
    \pi\vee \onetwo{r} = \hat{1}_{2r}}}
      K_\pi(X_{i_{1}},X_{i_{2}},\dots,X_{i_{2r}})
\\
&= \sum_{\substack{\pi\in\NCeven(2r)\\
    \pi\geq \pispecial{r}}}
      K_\pi(X_{i_{1}},X_{i_{2}},\dots,X_{i_{2r}})
    \end{aligned}
    \end{equation}
    and by \eqref{eq:kerh>=pi}
    these mixed cumulants vanish unless $\ker \underline{i}\geq\pispecial{r}$,
    i.e., unless $i_{1}=i_{2r}$ and $i_{2j}=i_{2j+1}$ for all $j$, 
    which exactly means $R$-cyclicity. It is easy to see that the same holds
    for $Z_{i,j}=a_{i,j}X_iX_j$.
\end{proof}

\begin{Rem}
  In some sense Proposition~\ref{prop:Rcyclic}
  is a generalization of the fact 
  \cite[Theorem~20.6]{NicaSpeicher:2006} 
  that the product of two free even selfadjoint elements is $R$-diagonal.
  This fact is indeed a consequence if we put 
  $A=
  \left[
    \begin{smallmatrix}
    0&1\\
    1&0
    \end{smallmatrix}
  \right]
  $
  in the preceding proposition.
  In fact it was shown in \cite{HaagerupLarsen:1998} that
  every $R$-diagonal element can be written as a product
  of two free even selfadjoint elements.
  It is an interesting question what would be a natural factorization
  of $R$-cyclic matrices. While it is necessary for a
  matrix to be $R$-cyclic that its entries form  $R$-diagonal pairs, 
  example \cite[Ex.~20.6]{NicaSpeicher:2006} shows that
  the representation \eqref{eq:ZAX} in the preceding
  proposition does not cover all $R$-cyclic matrices.
\end{Rem}

\begin{prop} \label{prop:CykliczneVariancja}
  Let $\X_1, \X_2,\dots, \X_n\in \A$ be a free family of even random variables, $\bX= [X_iX_j]_{i,j=1  }^n$ and $A=[a_{i,j}]_{i,j=1}^n\in M_n(\C)$ a scalar matrix.
  \begin{enumerate}[(i)]
   \item \label{it:cyclic2}
    The determining series of the entries of the $R$-cyclic matrix
    $\bZ=A\circ\bX$ and the $R$-transform of the
    quadratic form $T_n=\sum_{i,j}^na_{i,j}\X_i\X_j$ are related by
    \begin{equation}
      \label{eq:cyclic2}
      f_{A\circ\bX}(z,\dots,z)=\mathcal{R}_{T_n}(z),
    \end{equation}
    where $\mathcal{R}_{T_n}(z)=zR_{T_n}(z)$.
   \item \label{it:cyclic3}
    The cumulants of $T_n$ are given by
\begin{align}  \label{eq:kumulantsamplevariancenotiid}
 \nonumber &K_r(T_n)\\&=\sum_{i_1,\dots,i_r\in[n]} 
            \Tr(AE_{i_1}AE_{i_2}\dots AE_{i_r})\,
            \sum_{\substack{ \pi\in \NCeven(2r)\\ 
                \pi \vee \onetwo{r}=\hat{1}_{2r}}}
            K_\pi(X_{i_r},X_{i_1},X_{i_1},X_{i_2},\dots,X_{i_{r-1}},X_{i_r}).
    \end{align}
    \item If we assume in addition that $X_i$ are identically distributed
     the previous formula simplifies to the following convolution-like expression
    \begin{equation}  \label{eq:kumulantsamplevariance}
      K_r(T_n)=\sum_{ \pi\in \NC(r)}
      \Tr(\ED[\Krewl{\pi}](A)) \prod_{B\in\pi}K_{2|B|}(X).
    \end{equation} 
  \end{enumerate}
\end{prop}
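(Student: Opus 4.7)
My plan is to first establish part~(ii) by a direct cumulant expansion, then deduce part~(i) as a generating function identity, and finally specialize to obtain part~(iii).

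For part~(ii), I expand $K_r(T_n)$ by multilinearity of free cumulants as a sum over $r$-tuples of pairs $(i_k,j_k)$, and apply the Krawczyk--Speicher product formula (Theorem~\ref{thm:krawczyk}) to each cumulant $K_r(X_{i_1}X_{j_1},\ldots,X_{i_r}X_{j_r})$. Since the $X_i$ are free and even, the resulting inner sum over $\pi\in\NC(2r)$ with $\pi\vee\onetwo{r}=\hat{1}_{2r}$ collapses to $\NCeven(2r)$, and by Lemma~\ref{lemm:lematoparzystych} this is precisely the interval $[\pispecial{r},\hat{1}_{2r}]$. The constraint $\pi\geq\pispecial{r}$ forces the cyclic index pattern $j_k=i_{k+1}$ (indices taken modulo $r$), and a direct matrix computation shows that the resulting product $a_{i_r,i_1}a_{i_1,i_2}\cdots a_{i_{r-1},i_r}$ equals $\Tr(AE_{i_1}AE_{i_2}\cdots AE_{i_r})$, yielding formula~\eqref{eq:kumulantsamplevariancenotiid}.

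Part~(i) then follows by comparing generating functions. The determining series \eqref{eq:determiningseries} of $\bZ=A\circ\bX$ evaluated at $z_1=\cdots=z_n=z$ is, again by multilinearity and Proposition~\ref{prop:Rcyclic}, precisely $\sum_{r\geq 1}z^r$ times the right-hand side of~\eqref{eq:kumulantsamplevariancenotiid}; by~(ii) this equals $\sum_{r\geq 1}K_r(T_n)\,z^r=zR_{T_n}(z)=\mathcal{R}_{T_n}(z)$.

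For part~(iii), I invoke the isomorphism $[\pispecial{r},\hat{1}_{2r}]\cap\NCeven(2r)\cong\NC(r)$ of Lemma~\ref{lemm:lematoparzystych}, writing $\pi\mapsto\tilde\pi$ for the correspondence. Under the i.i.d.\ assumption, freeness together with~\eqref{eq:kerh>=pi} implies that $K_\pi(X_{i_r},X_{i_1},X_{i_1},\ldots,X_{i_{r-1}},X_{i_r})$ vanishes unless $\ker\underline i\geq\tilde\pi$, in which case it factors as $\prod_{B\in\tilde\pi}K_{2|B|}(X)$. Interchanging the order of summation reduces part~(iii) to the purely combinatorial matrix identity
\begin{equation*}
\sum_{\underline i:\,\ker\underline i\geq\tilde\pi}\Tr(AE_{i_1}AE_{i_2}\cdots AE_{i_r})=\Tr(\ED[\Krewl\tilde\pi](A,A,\ldots,A)).
\end{equation*}
To establish this I rewrite $\Tr=\Tr\circ\ED$, close the cyclic word, apply Lemma~\ref{lem:kreweras}(ii) for the extended Kreweras complement $\Krewlr$, and use Lemma~\ref{lemm:kreweras} to translate the extended complement of the closed word into the left Kreweras complement $\Krewl\tilde\pi$.

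The main obstacle will be this last identification: Lemma~\ref{lem:kreweras}(ii) is stated for an open word $A_1E_{i_1}\cdots A_rE_{i_r}A_{r+1}$ with $r+1$ matrices, whereas the cyclic trace involves only $r$ copies of $A$ alternating with $r$ projections. Closing the cycle requires identifying the boundary block and verifying that, under the trace, the block joining $\bar 0$ and $\bar r$ in $\Krewlr\tilde\pi$ collapses so as to produce $\Krewl\tilde\pi$ instead. I plan to carry this out by induction on the number of blocks of $\tilde\pi$, peeling off its outermost block and applying the decomposition of $\Krewl$ into extended Kreweras complements of the restrictions $\pi_1,\pi_2,\ldots,\pi_p$ provided by Lemma~\ref{lemm:kreweras}.
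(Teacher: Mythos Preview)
Your proposal is correct and follows essentially the same route as the paper: expand $K_r(T_n)$ by multilinearity and Theorem~\ref{thm:krawczyk}, use evenness plus Lemma~\ref{lemm:lematoparzystych} to force the cyclic index pattern (yielding~(ii) and hence~(i)), and then reduce~(iii) to the matrix identity $\sum_{\ker\underline{i}\geq\pi}\Tr(AE_{i_1}\cdots AE_{i_r})=\Tr(\ED[\Krewl\pi](A,\dots,A))$ via Lemma~\ref{lem:kreweras}. The only differences are organizational: the paper derives~(i) and~(ii) in a single chain and dispatches the last matrix identity with a one-line ``we infer from Lemma~\ref{lem:kreweras}'', whereas you correctly identify the $\Krewlr$-versus-$\Krewl$ mismatch and resolve it via Lemma~\ref{lemm:kreweras}; note that a full induction on $|\tilde\pi|$ is not needed---peeling off the last block of $\tilde\pi$ once, factoring the trace over the resulting gaps, and applying Lemma~\ref{lem:kreweras}(ii) to each gap already gives $\prod_k\ED[\Krewlr{\pi_k}](A,\dots,A)$, which Lemma~\ref{lemm:kreweras} assembles into $\ED[\Krewl\tilde\pi](A,\dots,A)$.
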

\begin{proof}

From the definition of $T_n$ we see that 
\begin{align*}
 K_r(T_n)
 &= \sum_{i_1,i_2,\dots,i_{2r}\in[n]}
      K_r(Z_{i_{1},i_{2}},
         Z_{i_{3},i_{4}},
         \dots,
         Z_{i_{2r-1},i_{2r}})
  \\
 &= \sum_{\substack{
     i_1,i_2,\dots,i_{2r} \in[n]    \\
     \ker \underline{i}\geq\pispecial{r} }}
    \sum_{\substack{\pi\in\NCeven(2r)\\ \pi\geq \pispecial{r}}}
      a_{i_{1},i_{2}}      a_{i_{3},i_{4}} \dotsm       a_{i_{2r-1},i_{2r}}
      K_\pi(X_{i_{1}},X_{i_{2}},\dots,X_{i_{2r}}),
\intertext{where we used \eqref{eq:KrHx1Xh2}. 
  Having eliminated the zero contributions 
  we can apply Lemma~\ref{lemm:lematoparzystych} in the reverse direction and obtain}
 &= \sum_{\substack{
     i_1,i_2,\dots,i_{2r}  \in[n]   \\
     \ker \underline{i}\geq\pispecial{r} }}
      K_r(Z_{i_{1},i_{2}},
         Z_{i_{3},i_{4}},
         \dots,
         Z_{i_{2r-1},i_{2r}})
\\
 &= \sum_{i_1,i_2,\dots,i_r}
      K_r(Z_{i_r,i_1},Z_{i_1,i_2},\dots,Z_{i_{r-1},i_r}),
\intertext{
  which after comparison with \eqref{eq:determiningseries} yields
  \eqref{eq:cyclic2}. 
  We now expand further and obtain}
 &= \sum_{i_1,i_2,\dots,i_r\in[n]}
      a_{i_r,i_1} a_{i_1,i_2} \dotsm a_{i_{r-1,i_r}}
      K_r(X_{i_r}X_{i_1},X_{i_1}X_{i_2},\dots,X_{i_{r-1}}X_{i_r})
\\
 &=\sum_{i_1,\dots,i_r\in[n]} 
            \Tr(AE_{i_1}AE_{i_2}\dots AE_{i_r})\,
            \sum_{\substack{ \pi\in \NCeven(2r)\\ 
                \pi \vee \onetwo{r}=\hat{1}_{2r}}}
            K_\pi(X_{i_r},X_{i_1},X_{i_1},X_{i_2},\dots,X_{i_{r-1}},X_{i_r}),
 \\
\intertext{which yields \eqref{eq:kumulantsamplevariancenotiid}. Now denoting by $\hat\pi$ the image of $\pi\in\NC(r)$ under the bijection introduced in Lemma~\ref{lemm:lematoparzystych} we can rewrite this as}
 &=\sum_{ \pi\in \NC(r)}
 \left(\sum_{\ker\underline{i}\geq \pi}\Tr(AE_{i_1}AE_{i_2}\dots AE_{i_r})\right)
 K_{\hat\pi}(X)
.
\end{align*} 
Finally we infer \eqref{eq:kumulantsamplevariance}  from Lemma~\ref{lem:kreweras}.

\end{proof}

\begin{Rem}
  It was observed in \cite[Rem.~4.1]{NicaSpeicherShlyakhtenko:2002} that
  $R$-cyclicity is preserved under Hadamard products with constant matrices.
  Moreover inspecting the preceding proof one can easily see the that
  for an arbitrary $R$-cyclic matrix $\bX=[X_{i,j}]$ and any scalar matrix
  $A=[a_{i,j}]$
  the determining series of the Hadamard product $A\circ \bX=[a_{i,j}X_{i,j}]$
  is given by
  $$
  f_{A\circ \bX}(z_1,z_2,\dots,z_n)
  = \Tr(f_\bX(AE_1\otimes z_1,
            AE_2\otimes z_2,
            \dots,
            AE_n\otimes z_n))
  .
  $$
\end{Rem}

In fact we have proved the following slightly more general statement.
\begin{theo}
  Let $X_i$ be free copies of an even random variable $X$, 
  $\bX=[X_iX_j]_{i,j=1}^n$ be the matrix of products as above (which is $R$-cyclic)
  and let $A_1,A_2,\dots,A_r \in M_n(\C)$ be
  arbitrary scalar matrices. Then $(A_1\circ \bX,A_2\circ \bX,\dots,A_r\circ \bX)$
  is an $R$-cyclic family and the joint cumulant of $T_k=\sum_{ij}a^{(k)}_{ij}X_iX_j$ is
  $$
  K_r(T_1,T_2,\dots,T_r)=\sum_{ \pi\in \NC(r)}
      \Tr(\ED[\Krewl{\pi}](A_1,A_2,\dots,A_r))
      \prod_{B\in\pi}K_{2|B|}(X).
  $$
\end{theo}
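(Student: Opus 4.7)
The plan is to follow the argument of Proposition~\ref{prop:CykliczneVariancja} step by step, promoting each occurrence of $A$ to the appropriate $A_k$ and working with joint cumulants rather than ordinary ones. The assertion splits into $R$-cyclicity of the family $(A_1\circ\bX,\dots,A_r\circ\bX)$ and the explicit Kreweras-type cumulant formula.

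For $R$-cyclicity, any joint cumulant of entries $(A_{k_m}\circ\bX)_{i_m,j_m}$ factors by multilinearity into the scalar $\prod_m a^{(k_m)}_{i_m,j_m}$ times $K_s(X_{i_1}X_{j_1},\dots,X_{i_s}X_{j_s})$. The latter is exactly the object analyzed in the proof of Proposition~\ref{prop:Rcyclic}: Theorem~\ref{thm:krawczyk} combined with Lemma~\ref{lemm:lematoparzystych} restricts the relevant partitions to lie above $\pispecial{s}$, and then \eqref{eq:kerh>=pi} forces the cyclic index pattern $j_m=i_{m+1}$ (mod $s$). This is precisely $R$-cyclicity of the family.

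For the cumulant formula, I would expand $K_r(T_1,\dots,T_r)$ by multilinearity and apply the cyclic condition above to collapse the sum to
\[
K_r(T_1,\dots,T_r)=\sum_{i_1,\dots,i_r}\Tr(A_1E_{i_1}A_2E_{i_2}\cdots A_rE_{i_r})\,K_r(X_{i_r}X_{i_1},X_{i_1}X_{i_2},\dots,X_{i_{r-1}}X_{i_r}),
\]
where the identification of the scalar prefactor as a trace is a one-line entry-wise calculation, since $A_1E_{i_1}\cdots A_rE_{i_r}$ is supported in a single column. Next, Lemma~\ref{lemm:lematoparzystych} expands the inner cumulant as a sum over $\pi\in\NC(r)$ using the bijection with $[\pispecial{r},\hat{1}_{2r}]$, and the i.i.d.\ evenness hypotheses together with \eqref{eq:kerh>=pi} collapse each block contribution to $K_{2|B|}(X)$ under the constraint $\ker\underline{i}\geq\pi$. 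Interchanging the order of summation then yields
\[
K_r(T_1,\dots,T_r)=\sum_{\pi\in\NC(r)}\Bigl(\sum_{\ker\underline{i}\geq\pi}\Tr(A_1E_{i_1}\cdots A_rE_{i_r})\Bigr)\prod_{B\in\pi}K_{2|B|}(X).
\]

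The remaining step, and what I expect to be the main technical obstacle, is to identify the parenthesized inner sum with $\Tr(\ED[\Krewl{\pi}](A_1,\dots,A_r))$. This is the multi-matrix analogue of the closing step of the proof of Proposition~\ref{prop:CykliczneVariancja}; because the trace closes the alternating product cyclically and no additional matrix is being sandwiched in, the extended Kreweras complement appearing in Lemma~\ref{lem:kreweras}(ii) must collapse to the left Kreweras complement. I would establish this by adapting the block-splitting induction from the proof of Lemma~\ref{lem:kreweras}: isolate the block of $\pi$ containing the last index, use $\Tr(M)=\Tr(\ED(M))$ for scalar $M$ together with cyclicity of trace to reduce to the subpartitions cut out by that block, and close the induction, verifying the base cases $r\leq 2$ by direct computation.
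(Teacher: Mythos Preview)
Your proposal is correct and follows essentially the same route as the paper: the paper presents this theorem with the preamble ``In fact we have proved the following slightly more general statement,'' meaning the proof of Proposition~\ref{prop:CykliczneVariancja} carries over verbatim once each $A$ is replaced by the appropriate $A_k$, and this is exactly what you do. Your final step---reducing $\Krewlr\pi$ to $\Krewl\pi$ under the trace via the block-splitting of Lemma~\ref{lemm:kreweras}---is somewhat more explicit than the paper's one-line ``we infer \eqref{eq:kumulantsamplevariance} from Lemma~\ref{lem:kreweras},'' but the underlying argument is the same.
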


It was shown in \cite[Section~8]{NicaSpeicherShlyakhtenko:2002} that
$R$-cyclicity of a matrix is equivalent to freeness
from the algebra of constant matrices $M_n(\IC)$ with amalgamation
over the commutative subalgebra $\mathcal{D}_n$ of constant diagonal matrices.
Moreover, the cyclic scalar cumulants can be interpreted as entries 
of the $\mathcal{D}_n$-valued cumulants as follows.
\begin{prop}[{\cite[Theorem~7.2]{NicaSpeicherShlyakhtenko:2002}}]
  \label{prop:NSS72}
  Let $(\bX_i)\subseteq M_n(\mathcal{A})$ be an $R$-cyclic family over 
  some noncommutative probability space $(\mathcal{A},\tau)$ and
  denote by $K_r^{\mathcal{D}}$ the operator valued cumulant functionals
  with respect to the conditional expecation \eqref{eq:ED}.
  Then for any $\Lambda_1,\Lambda_2,\dots,\Lambda_{r-1}\in\mathcal{D}_n$
  we have
  \begin{multline*}
  K_r^{\mathcal{D}}(\bX_1\Lambda_1,\bX_2\Lambda_2,\dots,\bX_{r-1}\Lambda_{r-1},\bX_r)
\\
  = \sum_{i_1,i_2,\dots,i_r=1}^n 
  \lambda_{i_1}^{(1)}  \lambda_{i_2}^{(2)}\dotsm   \lambda_{i_{r-1}}^{(r-1)}
  K_r(X_{i_r,i_1}^{(1)},X_{i_1,i_2}^{(2)},\dots,X_{i_{r-2},i_{r-1}}^{(r-1)},X_{i_{r-1},i_r}^{(r)})
  E_{i_r}.
  \end{multline*}
\end{prop}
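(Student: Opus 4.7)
The plan is to show that the multilinear functional
$$
\kappa_r(\bX_1\Lambda_1,\dots,\bX_{r-1}\Lambda_{r-1},\bX_r):=\sum_{i_1,\dots,i_r=1}^{n}\lambda_{i_1}^{(1)}\dotsm\lambda_{i_{r-1}}^{(r-1)}\,K_r(X_{i_r,i_1}^{(1)},X_{i_1,i_2}^{(2)},\dots,X_{i_{r-1},i_r}^{(r)})\,E_{i_r}
$$
coincides with the $\mathcal{D}_n$-valued cumulant $K_r^{\mathcal D}$. By uniqueness of operator-valued cumulants (Möbius inversion on $\NC(r)$), it suffices to verify the defining moment-cumulant relation
$$
\ED(\bX_1\Lambda_1\bX_2\Lambda_2\dotsm\bX_{r-1}\Lambda_{r-1}\bX_r)=\sum_{\pi\in\NC(r)}\kappa_\pi^{\mathcal D}(\bX_1\Lambda_1,\dots,\bX_r),
$$
where $\kappa_\pi^{\mathcal D}$ is the usual nested evaluation along the blocks of $\pi$.

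First I would compute the left-hand side directly. An entrywise expansion of the matrix product gives
$$
\ED(\bX_1\Lambda_1\dotsm\bX_r)=\sum_{i_0,i_1,\dots,i_{r-1}}\lambda_{i_1}^{(1)}\dotsm\lambda_{i_{r-1}}^{(r-1)}\,\tau\bigl(X_{i_0,i_1}^{(1)}X_{i_1,i_2}^{(2)}\dotsm X_{i_{r-1},i_0}^{(r)}\bigr)\,E_{i_0},
$$
to which the scalar moment-cumulant formula applies block by block. Now $R$-cyclicity of the family $(\bX_k)$ forces that within each block $B=\{k_1<k_2<\dotsm<k_s\}$ of a partition $\pi\in\NC(r)$ the scalar cumulant $K_s(X^{(k_1)}_{i_{k_1-1},i_{k_1}},\dots,X^{(k_s)}_{i_{k_s-1},i_{k_s}})$ vanishes unless the second index of each entry coincides with the first index of the next entry of $B$, cyclically. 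This cyclic constraint is exactly the statement that the running edge-index is constant on blocks of the left Kreweras complement $\Krewl\pi$.

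Next I would expand the right-hand side by inserting the proposed formula for $\kappa_r$ into each block $B\in\pi$: each block contributes a scalar cumulant with a cyclic index pattern and an outer diagonal projector $E_{i_B}$, and the nested projectors interleave with the intermediate $\Lambda$'s. Because $\pi$ is noncrossing and all inner outputs are diagonal, the interleavings can be reorganized into precisely the nested $\ED$-products treated by Lemma~\ref{lem:kreweras}\eqref{it:kreweras2}. Iterating that lemma over the block tree of $\pi$ collapses the nested sums into a single outer sum with index constancy governed by $\Krewl\pi$, matching term by term the structure produced by $R$-cyclicity in the scalar-expansion step.

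The main obstacle will be bookkeeping the indices: one must carefully identify, block by block, the cyclic index pattern \emph{inside} each scalar cumulant (imposed by $R$-cyclicity) with the Kreweras-complement constancy pattern that emerges from the nested evaluations of $\ED$ in $\kappa_\pi^{\mathcal D}$. This is the duality that makes the two expansions coincide, and once it is established the moment-cumulant identity follows, hence $\kappa_r=K_r^{\mathcal D}$. Verifying $\mathcal{D}_n$-multilinearity of $\kappa_r$ is immediate from the definition, since multiplying any $\bX_k\Lambda_k$ on the left or right by a diagonal matrix $\Lambda\in\mathcal{D}_n$ simply rescales the corresponding index sum by $\lambda_{i_k}$.
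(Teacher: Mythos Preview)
The paper does not give its own proof of this proposition: it is quoted as \cite[Theorem~7.2]{NicaSpeicherShlyakhtenko:2002} and used as a black box, so there is nothing in the present paper to compare your argument against.

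Your strategy is the standard one (and, in outline, the one used in the cited reference): define a candidate functional $\kappa_r$ by the right-hand side, verify the $\mathcal{D}_n$-valued moment--cumulant relation
\[
\ED(\bX_1\Lambda_1\dotsm\bX_r)=\sum_{\pi\in\NC(r)}\kappa_\pi^{\mathcal D}(\bX_1\Lambda_1,\dots,\bX_r),
\]
and conclude by M\"obius inversion. The identification you make---that on a block $B$ of $\pi$ the $R$-cyclicity constraints on the scalar cumulant force the edge-indices to be constant along the gaps between consecutive elements of $B$, which is exactly the constancy pattern on blocks of $\Krewl\pi$ produced by the nested $\ED$-evaluations via Lemma~\ref{lem:kreweras}---is the combinatorial core, and it is correct. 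One small point: you should also check that the candidate $\kappa_r$ is genuinely $\mathcal{D}_n$-\emph{bi}modular in the sense required of operator-valued cumulants (i.e.\ a diagonal coefficient inserted between arguments can be absorbed on either side), not merely multilinear over $\IC$; this follows from the cyclic index pattern but deserves a sentence. Apart from that, the sketch is sound and the remaining work is, as you say, bookkeeping.
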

In our context this leads to an operator valued boxed convolution
in the sense of \cite[Definition~2.1.6]{Speicher:1998} as follows.
\begin{prop}
  Let $\X_1, \X_2,\dots, \X_n\in \A$ be free even  copies
of a random variable $X$ and
  let $\bX=[X_iX_j]_{i,j=1}^n$.
  Then for any scalar matrices $A_1,A_2,\dots,A_r\in M_n(\IC)$ and
  $\Lambda_1,\Lambda_2,\dots,\Lambda_{r-1}\in\mathcal{D}_n$ we have
  \begin{multline*}
  K_r^{\mathcal{D}}(A_1\circ\bX\Lambda_1,A_2\circ\bX\Lambda_2,\dots,A_{r-1}\circ\bX\Lambda_{r-1},A_r\circ\bX)
  \\
  = \sum_{\pi\in \NC(r)}
\ED[\Krewl{\pi}](A_1\Lambda_1,A_2\Lambda_2,\dots,A_{r-1}\Lambda_{r-1},A_r)
      \prod_{B\in\pi}K_{2|B|}(X).
  \end{multline*}
\end{prop}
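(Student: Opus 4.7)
The plan is to lift the scalar formula of the preceding theorem to the $\mathcal{D}_n$-valued level by invoking Proposition~\ref{prop:NSS72}. Since the tuple $(A_1\circ\bX,\dots,A_r\circ\bX)$ is $R$-cyclic (as asserted in the preceding theorem), Proposition~\ref{prop:NSS72} applies and yields
$$
K_r^{\mathcal{D}}(A_1\circ\bX\Lambda_1,\dots,A_{r-1}\circ\bX\Lambda_{r-1},A_r\circ\bX)
=\sum_{i_1,\dots,i_r=1}^n\lambda^{(1)}_{i_1}\dotsm\lambda^{(r-1)}_{i_{r-1}}\,
 K_r(Z^{(1)}_{i_r,i_1},\dots,Z^{(r)}_{i_{r-1},i_r})\,E_{i_r},
$$
where $Z^{(k)}_{ij}=a^{(k)}_{ij}X_iX_j$ denotes the $(i,j)$-entry of $A_k\circ\bX$.

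Second, I would pull the scalars $a^{(k)}_{i_{k-1},i_k}$ out of the cumulant and expand the remaining $K_r(X_{i_r}X_{i_1},\dots,X_{i_{r-1}}X_{i_r})$ via Theorem~\ref{thm:krawczyk}, exactly as in the proof of Proposition~\ref{prop:CykliczneVariancja}. Lemma~\ref{lemm:lematoparzystych} restricts the contributing partitions to the interval $[\pispecial{r},\hat{1}_{2r}]\cap\NCeven(2r)$, which is canonically bijective with $\NC(r)$; let $\hat\pi\in\NC(r)$ denote the image. Because the $X_i$ are free and identically distributed, the vanishing of mixed cumulants forces $\ker\underline{i}\geq\hat\pi$ on the pair indices, and the cumulant factor collapses to $\prod_{B\in\hat\pi}K_{2|B|}(X)$.

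Third, I would absorb the scalar weights $\lambda^{(k)}_{i_k}$ into the matrices through the identity $a^{(k)}_{i_{k-1},i_k}\lambda^{(k)}_{i_k}=(A_k\Lambda_k)_{i_{k-1},i_k}$ for $k<r$, leaving the matrix sum
$$
\sum_{\ker\underline{i}\geq\hat\pi}
  (A_1\Lambda_1)_{i_r,i_1}(A_2\Lambda_2)_{i_1,i_2}\dotsm(A_r)_{i_{r-1},i_r}\,E_{i_r}.
$$
By Lemma~\ref{lem:kreweras}\eqref{it:kreweras2}, after unfolding the cyclic slot $i_r$ into a linear chain, this equals $\ED[\Krewl{\hat\pi}](A_1\Lambda_1,\dots,A_{r-1}\Lambda_{r-1},A_r)$, and summing over $\hat\pi\in\NC(r)$ yields the claim.

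The main obstacle I anticipate is precisely this last identification: Lemma~\ref{lem:kreweras}\eqref{it:kreweras2} naturally produces the extended Kreweras complement $\Krewlr$ of a partition of $\{1,\dots,r\}$ applied to $r+1$ matrices, whereas the statement requires the left Kreweras complement $\Krewl\hat\pi$ applied to $r$ matrices. One must invoke the relation between $\Krewlr$ and $\Krewl$ recalled in the paragraph preceding Lemma~\ref{lemm:kreweras} (namely, that $\Krewlr\hat\pi$ is obtained from $\Krewl\hat\pi$ by adjoining an extra slot to the first block) to pass between the two, and carefully match the cyclic index $i_r$ coming from the outer factor $E_{i_r}$ with the auxiliary slot produced by the lemma.
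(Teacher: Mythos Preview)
Your proposal is correct and follows essentially the same route as the paper: apply Proposition~\ref{prop:NSS72}, expand $K_r(X_{i_r}X_{i_1},\dots,X_{i_{r-1}}X_{i_r})$ via the product formula and Lemma~\ref{lemm:lematoparzystych}, use vanishing of mixed cumulants to restrict to $\ker\underline{i}\geq\hat\pi$, absorb the $\lambda^{(k)}_{i_k}$ into $A_k\Lambda_k$, and identify the remaining index sum with $\ED[\Krewl{\hat\pi}]$. The paper's proof performs exactly these steps and jumps to the final line without further comment; the bookkeeping you flag concerning $\Krewlr$ versus $\Krewl$ is genuine but is also left implicit in the paper (it is the same passage already used to derive \eqref{eq:kumulantsamplevariance} from Lemma~\ref{lem:kreweras} in Proposition~\ref{prop:CykliczneVariancja}).
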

\begin{proof}
  We use Proposition~\ref{prop:NSS72} and expand
  \begin{align*}
\MoveEqLeft{
    K_r^{\mathcal{D}}(A_1\circ\bX\Lambda_1,A_2\circ\bX\Lambda_2,\dots,A_{r-1}\circ\bX\Lambda_{r-1},A_r\circ\bX)    }&
\\
&= \sum_{i_1,i_2,\dots,i_r=1}^n
    a_{i_r,i_1}^{(1)}
    \lambda_{i_1}^{(1)}
    a_{i_1,i_2}^{(2)}
    \lambda_{i_2}^{(2)}
    \dotsm
    a_{i_{r-1},i_r}^{(r)}
    K_r(X_{i_r}X_{i_1},X_{i_1}X_{i_2},\dots,X_{i_{r-1}}X_{i_r}) E_{i_r}
\\
&= \sum_{i_1,i_2,\dots,i_r=1}^n
    a_{i_r,i_1}^{(1)}
    \lambda_{i_1}^{(1)}
    a_{i_1,i_2}^{(2)}
    \lambda_{i_2}^{(2)}
    \dotsm
    a_{i_{r-1},i_r}^{(r)}
    \sum_{\substack{\pi\in\NC(2r)\\\pi\geq  \pispecial{r}}}
    K_\pi(X_{i_r},X_{i_1},X_{i_1},X_{i_2},\dots,X_{i_{r-1}},X_{i_r}) E_{i_r}
\\
&=     \sum_{\substack{\pi\in\NC(2r)\\\pi\geq  \pispecial{r}}}
    \sum_{\ker\underline{i}\geq\pi}
    a_{i_r,i_1}^{(1)}
    \lambda_{i_1}^{(1)}
    a_{i_1,i_2}^{(2)}
    \lambda_{i_2}^{(2)}
    \dotsm
    a_{i_{r-1},i_r}^{(r)}
    E_{i_r}
    K_{\pi}(X)
\\
&= \sum_{\pi\in\NC(r)}
\ED[\Krewl{\pi}](A_1\Lambda_1,A_2\Lambda_2,\dots,A_{r-1}\Lambda_{r-1},A_r)
 K_{\hat\pi}(X),
  \end{align*}
  where $\hat\pi$ is defined in the proof of Proposition \ref{prop:CykliczneVariancja}.
\end{proof}
\begin{Rem}
In fact it is easy to see that the matrix
$\Xi=\diag(X_1,X_2,\dots,X_n)$ is free from $M_n(\IC)$ with
amalgamation over $\mathcal{D}_n$ as well
\cite[Example~2.3]{NicaSpeicherShlyakhtenko:2002}.
We have shown above that $A\circ\bX=\Xi A\Xi$ has the same property
although $A$ has not.
\end{Rem}

As a final corollary we obtain the following formula
for the cumulants of the sample variance.
\begin{cor}  
\label{cor:KrQn}
Let $\X_1, \X_2,\dots, \X_n$ be free copies
of a random variable $X$ and $Q_n=nS_n^2$ the rescaled sample variance
defined in \eqref{eq:SampleVariance}.
Let  
$\tilde{X}$ be the symmetrization of $X$, i.e., a formal random variable with
even distribution and cumulants $K_{2r}(\tilde X)=K_{2r}(X)$.
Then 
\begin{equation}
 K_r(\Q_n)= (n-1)K_r(Z^2),
 \end{equation}
where
$Z=\sqrt{\frac{n}{n-1}}P\tilde{X}P$ is the free compression of
 the symmetrization 
$\tilde{X}$ of $X$ by a projection $P$ of trace $\tau(P)=\frac{n-1}{n}$.

\end{cor}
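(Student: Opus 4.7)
The plan is to expand both $K_r(\Q_n)$ and $K_r(Z^2)$ explicitly as polynomials in the even free cumulants of $X$ and then match them term by term.

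First, by Lemma~\ref{lem:oddcancellation} (which is the key ingredient in the proof of Theorem~\ref{twr:OddElement}), $K_r(\Q_n)$ depends only on the even cumulants of $X$, so we may replace $X$ by its symmetrization $\tilde X$. Writing $\Q_n = T_n$ for the orthogonal projection matrix $A = I - J/n$ onto $\mathbf{1}^\perp \subset \IR^n$, formula \eqref{eq:kumulantsamplevariance} of Proposition~\ref{prop:CykliczneVariancja} gives
$$
K_r(\Q_n) = \sum_{\pi \in \NC(r)} \Tr(\ED[\Krewl{\pi}](A)) \prod_{B \in \pi} K_{2|B|}(\tilde X).
$$
Set $t := (n-1)/n$. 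The key step is to evaluate the scalar $\Tr(\ED[\Krewl{\pi}](A))$. Since $A$ is a projection with constant diagonal, $\ED(A^k) = tI$ for every $k \geq 1$; because $tI$ is a scalar matrix it commutes with everything and can be pulled out of any nested expression. A straightforward induction on $|\sigma|$, at each step contracting an inner block of $\sigma \in \NC$ and replacing it by the scalar $tI$, then yields $\ED[\sigma](A, \ldots, A) = t^{|\sigma|}\, I$. Specializing to $\sigma = \Krewl{\pi}$ and using $|\Krewl{\pi}| = r+1-|\pi|$ from \eqref{eq:cardKrew} together with the identity $nt = n-1$, we obtain
$$
\Tr(\ED[\Krewl{\pi}](A)) = n\, t^{r+1-|\pi|} = (n-1)\, t^{r-|\pi|}.
$$

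On the right-hand side, we view $Z$ in the compressed probability space $(P\mathcal{A}P, \tau_P)$ with $\tau_P = \tau/t$. The free compression formula $K^{\tau_P}_n(PaP, \ldots, PaP) = t^{n-1}\, K_n(a, \ldots, a)$, which one proves by expanding $\tau((aP)^n)$ via freeness of $P$ and $a$, noticing that the inner sum $\sum_{\sigma \leq \Krewr{\rho}} K_\sigma(P)$ collapses to $t^{|\Krewr{\rho}|}$, and comparing with the moment-cumulant formula in the compressed algebra, combined with the scaling $c^2 = n/(n-1) = 1/t$ yields
$$
K^{\tau_P}_{2m}(Z) = t^{m-1}\, K_{2m}(\tilde X), \qquad K^{\tau_P}_{2m+1}(Z) = 0.
$$
Thus $Z$ is even in the compressed algebra, and formula \eqref{eq:KX2} applied there gives
$$
K^{\tau_P}_r(Z^2) = \sum_{\pi \in \NC(r)} \prod_{B \in \pi} t^{|B|-1}\, K_{2|B|}(\tilde X) = \sum_{\pi \in \NC(r)} t^{r-|\pi|} \prod_{B \in \pi} K_{2|B|}(\tilde X).
$$
Comparing with the formula for $K_r(\Q_n)$ above term by term yields the claimed identity $K_r(\Q_n) = (n-1)\, K_r(Z^2)$.

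The main obstacle is giving a rigorous proof of $\ED[\sigma](A, \ldots, A) = t^{|\sigma|}\, I$; although intuitively transparent from the commutativity of scalar matrices, it requires unpacking the recursive definition of the partitioned expectation $\ED[\sigma]$ implicit in Lemma~\ref{lem:kreweras} and carefully tracking how scalar factors propagate through nested reductions along the noncrossing structure. The secondary technical point is verifying the free compression formula for arbitrary $a$ free from $P$, for which the above sketch via the Kreweras complement and the collapse $\sum K_\sigma(P) = t^{|\Krewr\rho|}$ provides a direct combinatorial derivation.
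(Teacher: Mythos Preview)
Your proof is correct and follows essentially the same route as the paper: reduce to the symmetrization via Lemma~\ref{lem:oddcancellation}, apply \eqref{eq:kumulantsamplevariance} with the projection matrix $A=I-\tfrac{1}{n}\One$, evaluate $\ED[\Krewl\pi](A)=t^{\abs{\Krewl\pi}}I$ using that $\ED(A)=tI$ is scalar, and match against \eqref{eq:KX2} together with the free compression formula from \cite[Corollary~14.13]{NicaSpeicher:2006}. The only cosmetic differences are that you work directly in the compressed probability space $(P\mathcal{A}P,\tau_P)$ whereas the paper first identifies the target cumulant sequence and then \emph{models} it by a compression, and that you supply an inductive justification for $\ED[\sigma](A)=t^{\abs{\sigma}}I$ and a sketch of the compression formula, both of which the paper treats as immediate or cites.
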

\begin{proof}
  By Lemma~\ref{lem:oddcancellation} the distribution of $Q_n$ does not
  change if we drop the odd cumulants and replace  $X$ 
  by its symmetrization $\tilde{X}$.
  The symmetrization $\tilde{X}$ being even, it follows from 
  Proposition~\ref{prop:CykliczneVariancja} that
  the information about the distribution of the sample variance is contained in
  the $R$-cyclic matrix
  $A\circ \tilde{\bX}=[a_{ij}\tilde{X}_i\tilde{X}_j]_{i,j=1}^n$,
  where  $A=I-\frac{1}{n}\One$ and $\One$ is the $n\times n$ matrix all of
  whose entries are $1$.
  This matrix is idempotent with $\ED(A)=(1-1/n)I$ and therefore
  $\ED[\pi](A)=(1-\frac{1}{n})^{\abs\pi}I$
  for every $\pi\in\NC(r)$. We insert this 
  into~\eqref{eq:kumulantsamplevariance} and 
  the cumulants of $Q_n$ evaluate to
  \begin{align*} %
    K_r(Q_n)
    &= n
       \sum_{\pi\in\NC(r)} 
       \left(
         1-\frac{1}{n}
       \right)^{\abs{\Krewl\pi}}
       \prod_{B\in\pi}K_{2\abs{B}}(\tilde{X})
    \intertext{This in turn by \eqref{eq:cardKrew} is equal to} 
    &= n
       \left(
         1-\frac{1}{n}
       \right)^{r+1} 
       \sum_{\pi\in\NC(r)} 
       \prod_{B\in\pi}\frac{n}{n-1}K_{2\abs{B}}(\tilde{X}).
  \end{align*}
  
  \begin{align*} %
   &= (n-1) 
       \sum_{\pi\in\NC(r)} 
       \prod_{B\in\pi}\frac{n}{n-1}
       K_{2\abs{B}}\Bigg( \sqrt{
         1-\frac{1}{n}
       }\tilde{X}\Bigg).
  \end{align*}
  In view of~\eqref{eq:KX2} this is the same as the cumulant $K_r(Z^2)$ where
  $Z$ is an even random variable with cumulants
  $$
  K_r(Z)=\frac{n}{n-1}K_r\Bigg( \sqrt{
         1-\frac{1}{n}
       }\tilde{X}\Bigg)
  .
  $$
  Such a random variable can be modeled as a free compression
  $$
  Z=\frac{n}{n-1}P \sqrt{
         1-\frac{1}{n}
       }\tilde{X}P=\sqrt{
        \frac{n}{n-1}
       }P \tilde{X}P,
  $$
  with $\tau(P)=\frac{n-1}{n}$, see
  \cite[Corollary~14.13]{NicaSpeicher:2006}.   
\end{proof}

\begin{Rem}
  In the paper \cite{NicaSpeicher:1996} of Nica and Speicher cited above,
  it was shown that for every probability measure $\mu$ there
  is a convolution semigroup $\{\mu^{\boxplus t}\mid t\geq 1\}$.
  Denote $\psi(\mu) = \inf\{t \mid \mu^{\boxplus t}\text{ exists}\}$.
  This can be seen as some kind of ``measure of free non-infinite divisibility''
  in the sense that $\mu$ is freely infinitely divisible if and only if
  $\psi(\mu)=0$. It is related to the
  free divisibility indicator $\phi(\mu)$ of \cite{BelinschiNica:2008} 
  by the inequality $\psi(\mu)\leq 1- \phi(\mu)$.
  If $\tilde{X}$ exists,
  the preceding proof shows that $\psi(Z)\leq \frac{n-1}{n}\psi(\tilde{X})$
  and in particular, if $\tilde{X}$ is $\boxplus$-infinitely divisible,
  then so is $Z$. 
  It then follows from 
  \cite[Theorem 6.1]{ArizmendiHasebeSakuma:2013} 
  that $Z^2$ is freely infinitely divisible as well and consequently also $Q_n$.

  However if $X$ is not freely infinitely divisible,
  the symmetrization $\tilde{X}$ constructed in Corollary~\ref{cor:KrQn}
  in general cannot be realized as an operator,
  see \cite[Remark~12~(2)]{NicaSpeicher:1998}.
\end{Rem}

  We show in the final section that any quadratic form in free even 
  random variables preserves free infinite divisibility.

\subsection{Preservation of free infinite divisibility}
It is shown in \cite{ArizmendiHasebeSakuma:2013}  that the free commutator of
freely infinitely
divisible random variables is also freely infinitely divisible
and the authors ask whether there
are other noncommutative polynomials which preserve free infinite divisibility. 
We show here that for self-adjoint operators this is the case
for any quadratic form in free random variables
whose distribution does not depend on the odd cumulants
of the original distribution.
This includes the free commutator and free sample variance.
In the proof below we will use compound free Poisson distributions $\mu$ 
with rate $\lambda$ and jump distribution $\nu$ which is
the unique probability distribution with free cumulants
$K_n(\mu)=\lambda m_n(\nu)$.
Compound free Poisson distributions are freely infinitely divisible, 
and moreover, any freely infinitely
divisible probability measure is the weak limit distribution of a sequence of
compound free Poisson random variables, see
\cite[Proposition A.2]{ArizmendiHasebeSakuma:2013}.

\begin{prop} \label{prop:NieskonczonaPodzielnosc}
  Let $\X_1, \X_2,\dots, \X_n\in \A_{sa}$ be a free family  
  of even freely infinitely divisible random variables.
  Let  $A=[a_{i,j}]_{i,j=1}^n\in M_n(\IC)$ be a selfadjoint matrix, 
  then the distribution of the quadratic form
  $T_n=\sum_{i,j}^na_{i,j}\X_i\X_j$ is also freely infinitely divisible.
\end{prop}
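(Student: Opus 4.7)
My plan is to combine a compound free Poisson approximation with the cumulant formula of Proposition~\ref{prop:CykliczneVariancja}(ii), and verify the free Lévy-Khinchin characterization of $T_n$ directly.

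\emph{Reduction to compound free Poisson.} As noted just before the statement of the proposition, every FID distribution is the weak limit of compound free Poisson distributions by \cite[Proposition~A.2]{ArizmendiHasebeSakuma:2013}. Symmetrizing each jump distribution preserves evenness in the limit, so every even FID law is a weak limit of compound free Poissons with even jump distributions. Because the cumulants of $T_n$ are polynomial in the individual cumulants $K_{2k}(X_i)$ through Proposition~\ref{prop:CykliczneVariancja}(ii), and because the FID class is closed under weak convergence, it is enough to assume each $X_i$ is compound free Poisson with rate $\lambda_i>0$ and symmetric jump distribution $\nu_i$, so that $K_{2k}(X_i)=\lambda_i\int t^{2k}\,d\nu_i(t)$.

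\emph{Lévy-Khinchin criterion.} I will use the characterization already invoked in the proof of Proposition~\ref{twr:kruglow}: a self-adjoint element $Y$ is FID if and only if there exists a positive finite measure $\sigma$ on $\mathbb{R}$ with
\[
K_{r+2}(Y)=\int y^{r}\,d\sigma(y),\qquad r\geq 0.
\]
So the task reduces to exhibiting such a measure $\sigma$ for $Y=T_n$. After substituting the compound-Poisson integrals for $K_{2k}(X_i)$ into Proposition~\ref{prop:CykliczneVariancja}(ii) and interchanging the summation over partitions with the integrations against the $\nu_i$'s, I aim to recognize the resulting expression as the $r$-th moment of an explicit positive measure.

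\emph{Identification of $\sigma$ via the $R$-cyclic model.} To make the previous step concrete, I would use the $R$-cyclic viewpoint from Section~\ref{sec:Rcyclic}. With $\Xi=\diag(X_1,\dots,X_n)$, the matrix $\Xi A\Xi$ is $R$-cyclic and $T_n$ equals, up to a normalization, the trace of $\mathbf{1}_n\cdot\Xi A\Xi$. Realizing each compound free Poisson $X_i$ on the full Fock space $\mathcal{F}(L^2(\nu_i))$ as $\ell(\xi_i)+\ell^{*}(\xi_i)+\Lambda(M_{t_i})$, where $\xi_i$ is a constant vector of norm $\sqrt{\lambda_i}$ and $M_{t_i}$ is multiplication by the identity function, and taking free copies on the free product of these Fock spaces, turns $\Xi A\Xi$ into an explicit operator. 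The candidate measure $\sigma$ is the spectral measure of a self-adjoint operator built from $A$ and $\bigoplus_i M_{t_i}$ on $\mathbb{C}^n\otimes\bigoplus_i L^2(\nu_i)$, relative to the cyclic vector $\tfrac{1}{\sqrt n}\mathbf{1}\otimes(\xi_1,\dots,\xi_n)$; positivity of $\sigma$ is then automatic.

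\emph{Main obstacle.} The hard part will be carrying out the Fock-space reorganization rigorously, because the cross terms $X_iX_j$ for $i\neq j$ mix creation, annihilation, and gauge operators across different Fock components of the free product. The combinatorial bookkeeping should mirror, in reverse, the cumulant formula: the weight $\Tr(\ED[\Krewl{\pi}](A))\prod_B K_{2|B|}(X)$ should reappear as the $r$-th moment of the proposed $\sigma$ after collecting contributions to $\tau(T_n^r)$. The necessary combinatorial identity should follow from Lemma~\ref{lemm:lematoparzystych} together with the Kreweras-complement manipulation in Proposition~\ref{prop:CykliczneVariancja}(iii) and from Proposition~\ref{prop:NSS72}, which translates cyclic scalar cumulants into the operator-valued setting, ensuring that only the $R$-cyclic entries contribute.
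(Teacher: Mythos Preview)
Your overall strategy --- reduce to symmetric compound free Poisson by approximation, then verify the L\'evy--Khinchin moment criterion for $T_n$ --- matches the paper's. However, your proposal stops short of being a proof: the step you yourself flag as the ``main obstacle,'' namely the Fock-space reorganization that would exhibit the measure $\sigma$, is never carried out. The description of the candidate $\sigma$ (spectral measure of some operator built from $A$ and $\bigoplus_i M_{t_i}$ with respect to a certain cyclic vector) is not precise enough to check, and the promised combinatorial identity is asserted rather than proved. As written, you have a plan, not an argument.

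The paper sidesteps your obstacle by a simple device you miss. Instead of modelling the $X_i$ on Fock space, one introduces auxiliary free variables $Y_i$ with $K_r(Y_i)=K_{2r}(X_i)$; when $X_i$ is compound free Poisson of rate $\lambda_i$ with symmetric jump law $\nu_i$, this $Y_i$ is compound free Poisson of rate $\lambda_i$ with jump law $\nu_i^2$, hence \emph{positive}. The bijection of Lemma~\ref{lemm:lematoparzystych} then converts formula~\eqref{eq:kumulantsamplevariancenotiid} directly into
\[
K_r(T_n)=n\,(\Tr_n\otimes\tau)\Bigl[\Bigl(\sum_{i=1}^n AE_i\otimes Y_i\Bigr)^{r}\Bigr],
\]
so the entire cumulant sequence of $T_n$ is, up to the factor $n$, the moment sequence of $(A\otimes I)\mathbf{Y}$ in $M_n(\mathbb{C})\otimes\mathcal{A}$, where $\mathbf{Y}=\sum_i E_i\otimes Y_i$. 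This operator is not self-adjoint, but positivity of $\mathbf{Y}$ allows one to rewrite these as the moments of the self-adjoint operator $\mathbf{Y}^{1/2}(A\otimes I)\mathbf{Y}^{1/2}$; the L\'evy--Khinchin criterion is then immediate (indeed $T_n$ is itself compound free Poisson in this case). No Fock-space cross-term bookkeeping is required at all.
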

\begin{proof}
Suppose first that each $X_i$ is a symmetric compound free Poisson variable
with rate $\lambda_i$ and jump distribution $\nu_i$. Let $\Y_i$ be free random
variables with compound free Poisson distribution of rate $\lambda_i$ and jump
distribution $\nu_i^2$, respectively, i.e., with cumulants given by
$K_{r}(\Y_i)=K_{2r}(\X_i)$. Since the support of the jump distribution
$\nu_i^2$ is contained in the positive real axis, it is the spectral
measure of some positive random variable $Z_i$ and follows from
\cite[Proposition~12.18]{NicaSpeicher:2006} that we can represent $Y_i$ as 
a free compression  $Y_i=SZ_iS$ by a free semicircular element $S$ and in
particular $Y_i$ is positive as well.

Using the equation  \eqref{eq:kumulantsamplevariancenotiid} we have
 \begin{align*}  
 K_r(T_n)&=
\sum_{i_1,\dots,i_r\in[n]} 
            \Tr(AE_{i_1}AE_{i_2}\dots AE_{i_r})\,
            \sum_{\substack{ \pi\in \NCeven(2r)\\ 
                \pi \vee \onetwo{r}=\hat{1}_{2r}}}
            K_\pi(X_{i_r},X_{i_1},X_{i_1},X_{i_2},\dots,X_{i_{r-1}},X_{i_r});
& \intertext{Now the bijection introduced in Lemma~\ref{lemm:lematoparzystych}
  implies that $K_{r}(\Y_i)=K_{2r}(\X_i)$ and thus the above is equal to}
 &= \sum_{ i_1,\dots,i_r\in[n]}\Tr(AE_{i_1}AE_{i_2}\dots AE_{i_r})\sum_{\substack{ \pi\in \NC(r) \\ \pi\leq \ker\underline{i} 
 } } K_\pi(Y_{i_1},Y_{i_2},\dots,Y_{i_r})
\\
 &= \sum_{ i_1,\dots,i_r\in[n]}\Tr(AE_{i_1}AE_{i_2}\dots AE_{i_r})\sum_{\pi\in \NC(r)  } K_\pi(Y_{i_1},Y_{i_2},\dots,Y_{i_r})
\\
 &= \sum_{ i_1,\dots,i_r\in[n]}
    \Tr(AE_{i_1}AE_{i_2}\dots AE_{i_r})
    \,
    \tau \Big(\prod_{j=1}^r\Y_{i_j}\Big)=n\times \Tr_n\otimes \tau\left[ \Big(\sum_{i=1}^nAE_i\otimes \Y_{i}\Big)^r\right].
\end{align*}
Hence the cumulant sequence of $T_n$ is the moment sequence of $(A\otimes I)Y$
in the noncommutative probability space $M_n(\C)\otimes \A$, with state
$\Tr_n\otimes \tau$ where ${\mathbf{Y}}=\sum_{i=1}^nE_i\otimes \Y_{i}$.
This operator is not self-adjoint, yet the following
arguments show that it is indeed a positive definite moment sequence.
We have seen above that all  $Y_i$ are  positive and it follows that ${\mathbf{Y}}$ is
positive as well, thus the sequence
$$
\Tr\otimes\tau( ((A\otimes I){\mathbf{Y}} )^r)
=\Tr\otimes\tau( ({\mathbf{Y}}^{1/2}(A\otimes I){\mathbf{Y}}^{1/2} )^r)
$$
is indeed the moment sequence of a self-adjoint random variable.

Suppose now that $X_i$ has a more general symmetric distribution $\mu_i$.
Then the argument of the proof of Proposition A.2. in \cite{ArizmendiHasebeSakuma:2013}
shows that  $\mu_i$ can be approximated by symmetric compound free Poisson variables, 
say $\mu_i=\lim_{k\to \infty}\mu_{i,k}$. 
It follows from the above argument that the distribution $T_n$ can be approximated by 
freely infinitely divisible distributions and since $ID(\boxplus)$ is closed under
convergence in distribution, $T_n$ is freely infinitely divisible as well.
\end{proof}
Putting together Lemma~\ref{lem:oddcancellation} and Proposition~\ref{prop:NieskonczonaPodzielnosc} we obtain the following corollary.
\begin{cor} 
Let $\X_1, \X_2,\dots, \X_n\in \A_{sa}$ be a free family  
  of freely infinitely divisible random variables. 
  Let $P$ be a selfadjoint symmectric polynomial of degree $2$
  in noncommuting variables such that the distribution of
  the random variable $Y=P(X_1,X_2,\dots,X_n)$ does not depend on the odd cumulants.
  Then the distribution of $Y$ is freely infinitely divisible as well.
  In particular,
  the commutator $i(X_1X_2-X_2X_1)$ of two freely infinitely divisible 
  random variables is freely infinitely divisible and
  the same is true of the sample variance of a free identically distributed 
  family of  freely infinitely divisible random variables. 
\end{cor}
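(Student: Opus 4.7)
The plan is to combine the two preceding results. The hypothesis that the distribution of $Y$ does not depend on the odd cumulants of the $X_i$ allows us to replace each $X_i$ by an ``even FID surrogate'' $\tilde X_i$ with the same even cumulants, without affecting the distribution of $Y$. Once the $\tilde X_i$ are available as a free family of even freely infinitely divisible random variables, Proposition~\ref{prop:NieskonczonaPodzielnosc} shows that any self-adjoint quadratic form in them is FID, so $Y$ (equal in distribution to $P(\tilde X_1,\dots,\tilde X_n)$) is FID as well.

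The first step is to construct $\tilde X_i$. Since $X_i$ is FID, the free L\'evy-Khinchin formula \cite[Theorem~13.16]{NicaSpeicher:2006} gives a finite positive L\'evy measure $\rho_i$ on $\R$ with $K_{r+2}(X_i) = \int x^r \, d\rho_i(x)$. Let $\sigma_i$ be its symmetrisation, $\sigma_i(B) = \tfrac12(\rho_i(B) + \rho_i(-B))$; this is again a finite positive measure, and all its odd moments vanish. Define $\tilde X_i$ to be the FID random variable with zero drift and L\'evy measure $\sigma_i$. Then $\tilde X_i$ is even, is FID by construction, and satisfies $K_{2r}(\tilde X_i) = K_{2r}(X_i)$ for every $r$. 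Realising the $\tilde X_i$ as a free family in a common noncommutative probability space (e.g.\ via the free product), the odd-cumulant hypothesis gives that $Y$ and $\tilde Y := P(\tilde X_1,\dots,\tilde X_n)$ have the same distribution.

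The second step is immediate: writing $P = \sum_{i,j} a_{i,j} X_i X_j$ with self-adjoint $A = [a_{i,j}]$, Proposition~\ref{prop:NieskonczonaPodzielnosc} applied to the free family $\tilde X_1,\dots,\tilde X_n$ of even FID variables yields that $\tilde Y$ is FID, and hence so is $Y$. For the two advertised consequences, the odd-cumulant hypothesis is supplied by Lemma~\ref{lem:oddcancellation} in the case of $Q_n$ (which is, up to the factor $(n-1)!$, a sum $\sum_\sigma L_\sigma^2$ of permuted squares of the centred linear form $L = X_1 - \overline{X}$), and by the theorem of Nica and Speicher~\cite{NicaSpeicher:1998} in the case of the commutator $i(X_1 X_2 - X_2 X_1)$.

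The main technical obstacle I foresee is that $\tilde X_i$ can fail to be bounded even when $X_i$ is, since the symmetrised L\'evy measure $\sigma_i$ generally has support straddling both halves of the real line. I would navigate this either by working with operators affiliated with the von Neumann algebra, invoking Lemma~\ref{lemm:bounded} at the end to return to $\A_{sa}$ once $Y$ itself is known to be bounded, or by approximating each $X_i$ by bounded compound free Poissons as in the proof of Proposition~\ref{prop:NieskonczonaPodzielnosc} and passing to the weak limit, using that the class of FID distributions is closed under weak convergence.
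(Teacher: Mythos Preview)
Your proposal is correct and follows exactly the route the paper intends: the paper's ``proof'' is the single sentence ``Putting together Lemma~\ref{lem:oddcancellation} and Proposition~\ref{prop:NieskonczonaPodzielnosc} we obtain the following corollary,'' and you have supplied the missing details, in particular the explicit construction of the even FID surrogate $\tilde X_i$ via symmetrising the free L\'evy measure.

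One remark: your boundedness worry is unfounded. If $X_i\in\mathcal{A}_{sa}$ is bounded and FID, its L\'evy measure $\rho_i$ has compact support (since $\lvert K_{r+2}(X_i)\rvert = \lvert\int x^r\,d\rho_i\rvert$ grows at most geometrically in $r$), and then so does its symmetrisation $\sigma_i$; hence $\tilde X_i$ is again a bounded FID element and Proposition~\ref{prop:NieskonczonaPodzielnosc} applies directly, with no need for affiliated operators or a further approximation step.
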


\section{Concluding Remarks}
In the present paper we have  shown that the sample variance
shares the following properties with the free commutator:
\begin{enumerate}
 \item Odd cumulants do not contribute to the distribution.
 \item Free infinite divisibility is preserved.
\end{enumerate}

This phenomenon raises the following problems and conjectures,
some of which will be investigated in forthcoming papers.
\begin{problem}
  \label{prob:oddpoly}
  Characterize the class of selfadjoint polynomials 
  $P\in\IC\langle X_1,X_2,\dots,X_n\rangle$ 
  in noncommuting variables $X_1,X_2,\dots,X_n$ with the property
  that the distribution of $P(\X_1,\dots, \X_n)$ does not depend on the
  odd cumulants of $X$ whenever $\X_1, \X_2,\dots, \X_n$ 
  are free copies of a fixed random variable $X$.
\end{problem}

\begin{Con} 
  Whenever a homogeneous polynomial $P$ has the properties described 
  in Problem~\ref{prob:oddpoly}
  and $X_1,X_2,\dots,X_n$ are free copies of a freely infinitely divisible
  random variable $X$, then $P(\X_1,\dots, \X_n)$ is
  freely infinitely divisible as well.
\end{Con}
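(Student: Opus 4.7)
The plan is to follow the three-step template of Proposition~\ref{prop:NieskonczonaPodzielnosc}: reduce to symmetric inputs, approximate by compound free Poissons, and realize the resulting cumulant sequence of $P(\X_1,\dots,\X_n)$ as a genuine moment sequence of a self-adjoint positive operator.

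First I would exploit the odd-cumulant insensitivity of $P$ to replace $X$ by its symmetrization $\tilde X$ with $K_{2r}(\tilde X)=K_{2r}(X)$ and $K_{2r+1}(\tilde X)=0$; by hypothesis this does not change the distribution of $P(\X_1,\dots,\X_n)$. Free infinite divisibility of $X$ carries over to $\tilde X$ because the free L\'evy measure $\rho$ of $X$ can itself be symmetrized to $\tilde\rho(B)=\frac{1}{2}(\rho(B)+\rho(-B))$, which is again a positive finite measure and whose moments $\int x^n\,d\tilde\rho(x)$ equal $K_{n+2}(\tilde X)$ by construction. It then suffices to treat the case where $X$ itself is even and freely infinitely divisible.

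Next, as in Proposition~\ref{prop:NieskonczonaPodzielnosc}, I would approximate the even freely infinitely divisible law $\mu_X$ weakly by symmetric compound free Poisson laws $\mu_k$, whose existence follows from the symmetric version of \cite[Proposition~A.2]{ArizmendiHasebeSakuma:2013}. Since the class of freely infinitely divisible distributions is closed under weak convergence and $P$ is a fixed polynomial, it suffices to show that $P$ evaluated on free copies of a single symmetric compound free Poisson $Y$ has a freely infinitely divisible distribution. For such a $Y$ with symmetric jump distribution $\nu$, the construction from the proof of Proposition~\ref{prop:NieskonczonaPodzielnosc} furnishes a positive auxiliary element $Z=SZ_0S$, a semicircular compression of the positive element whose spectral distribution is $\nu^2$, satisfying $K_r(Z)=K_{2r}(Y)$.

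The decisive step is then to rewrite $K_r(P(\X_1,\dots,\X_n))$ via Theorem~\ref{thm:krawczyk} and Lemma~\ref{lemm:lematoparzystych} as a sum of products of even cumulants $K_{2|B|}(Y)$ indexed by certain noncrossing partitions, substitute $K_{|B|}(Z)$ in each factor, and recognize the whole expression as $\Tr_N\otimes\tau(\mathbf{T}^r)$ for some self-adjoint positive tensor $\mathbf{T}\in M_N(\C)\otimes\A$; this would identify the cumulant sequence of $P(\X_1,\dots,\X_n)$ as a moment sequence of a self-adjoint element and conclude free infinite divisibility through the free L\'evy--Khinchin criterion. The main obstacle is the construction of $\mathbf{T}$: in the quadratic case the $R$-cyclic matrix $A\circ\bX=\Xi A\Xi$ of Proposition~\ref{prop:CykliczneVariancja} supplies it immediately, but in degree $d\geq 3$ there is no equally clean matrix model, and one would have to build a tensor on something like $M_{n^{d-1}}(\C)\otimes\A$ whose indices encode the monomials of $P$ by $d$-tuples. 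Verifying positivity of this tensor and checking the identity relating its trace powers to $K_r(P(\X_1,\dots,\X_n))$ would require a nontrivial generalization of the Kreweras-complement combinatorics powering Proposition~\ref{prop:CykliczneVariancja}, and I expect this combinatorial-operator bookkeeping to be where the real difficulty lies.
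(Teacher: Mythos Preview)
The statement you are attempting to prove is a \emph{conjecture} in the paper, not a theorem: it appears in the concluding remarks as an open problem and the paper provides no proof. So there is nothing to compare your attempt against.

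Your proposal is not a proof either, and you say so yourself. The first two reduction steps (symmetrize, then approximate by symmetric compound free Poissons) are sound and mirror the quadratic case of Proposition~\ref{prop:NieskonczonaPodzielnosc}. But the decisive third step---constructing a self-adjoint operator $\mathbf{T}$ whose moment sequence realizes the cumulant sequence of $P(\X_1,\dots,\X_n)$---is left as a hope. In degree two the $R$-cyclic matrix $\Xi A\Xi$ does the job precisely because Lemma~\ref{lemm:lematoparzystych} gives a clean bijection between $\{\pi\in\NCeven(2r):\pi\vee\onetwo{r}=\hat{1}_{2r}\}$ and $\NC(r)$, which is what collapses the even-cumulant expansion of $K_r(T_n)$ into a moment of a single operator. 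For a homogeneous polynomial of degree $d\geq 3$ there is no analogue of this bijection in the paper, and without it you cannot identify the resulting partition sum as a moment sequence, let alone a positive one. Your suggestion of working in $M_{n^{d-1}}(\C)\otimes\A$ is plausible as a bookkeeping device, but positivity of the putative $\mathbf{T}$ is the whole content of the conjecture and you give no mechanism for it. In short, you have correctly located the obstacle, but not overcome it; the conjecture remains open.
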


\emph{Acknowledgments}.
The authors would like to thank Marek Bo\.zejko and
Roland Speicher for several discussions and helpful comments.
We are very grateful to Takahiro Hasebe for many comments and in particular
for pointing out a gap in the proof of Proposition~\ref{prop:NieskonczonaPodzielnosc}.
The first author also thanks Abram Kagan for a very interesting discussion about Ruben's
problem during AMISTAT 2015 in Prague.
Finally we thank the referee for a careful reading of the manuscript and
numerous minor corrections.

The work was partially supported by grant number 2014/15/B/ST1/00064 from the \textit{Narodowe Centrum Nauki}, 
 Project No P 25510-N26 of the Austrian Science Fund (FWF) and 
travel grant PL 08/2016 of the oead.
\bibliography{ChiSquare}
\bibliographystyle{amsplain}

\end{document}